\newcommand{\be}{\begin{equation}}
\newcommand{\ee}{\end{equation}}
\newcommand{\ba}{\begin{eqnarray}}
\newcommand{\ea}{\end{eqnarray}}
\newcommand{\bal}{\begin{align}}
\newcommand{\eal}{\end{align}}
\newcommand{\baln}{\begin{align*}}
\newcommand{\ealn}{\end{align*}}
\newcommand{\bi}{\begin{itemize}}
\newcommand{\ei}{\end{itemize}}
\newcommand{\bn}{\begin{enumerate}}
\newcommand{\en}{\end{enumerate}}
\newcommand{\bbm}{\begin{bmatrix}}
\newcommand{\ebm}{\end{bmatrix}}
\newcommand{\bpm}{\begin{pmatrix}}
\newcommand{\epm}{\end{pmatrix}}
\newcommand{\bp}{\begin{proof}}
\newcommand{\ep}{\end{proof}}
\newcommand{\nn}{\nonumber}
\newcommand{\mr}{\ensuremath{\mathrm}}
\newcommand{\scr}{\ensuremath{\mathscr}}
\newcommand{\mbf}{\ensuremath{\mathbf}}
\newcommand{\mc}{\ensuremath{\mathcal}}
\newcommand{\mf}{\ensuremath{\mathfrak}}
\newcommand{\ov}{\ensuremath{\overline}}
\newcommand{\sm}{\ensuremath{\setminus}}
\newcommand{\wt}{\ensuremath{\widetilde}}
\newcommand{\Ga}{\ensuremath{\Gamma}}
\newcommand{\ga}{\ensuremath{\gamma}}
\newcommand{\Om}{\ensuremath{\Omega}}
\newcommand{\om}{\ensuremath{\omega}}
\newcommand{\eps}{\ensuremath{\epsilon }}
\def\C{\mathbb{C}}
\def\D{\mathbb{D}}
\def\N{\mathbb{N}}
\def\B{\mathbb{B}}
\def\A{\mathcal{A} _d}
\def\fp{\mathbb{C} \{ \mathfrak{z} _1 , ..., \mathfrak{z} _d \} }
\newcommand{\cH}{\ensuremath{\mathcal{H}}}
\newcommand{\J}{\ensuremath{\mathcal{J} }}
\newcommand{\K}{\ensuremath{\mathcal{K} }}
\newcommand{\F}{\ensuremath{\mathbb{F} }}
\newcommand{\cK}{\ensuremath{\mathcal{K}}}
\newcommand{\cM}{\ensuremath{\mathcal{M}}}
\newcommand{\cN}{\ensuremath{\mathcal{N}}}
\newcommand{\bsm}{\left ( \begin{smallmatrix}}
\newcommand{\esm}{\end{smallmatrix} \right) }
\newcommand{\ip}[2]{\ensuremath{\langle {#1} , {#2} \rangle}}
\newcommand{\ipcn}[2]{\ensuremath{\left( {#1} , {#2} \right) _{\C ^n}}}
\newcommand{\ipcN}[2]{\ensuremath{\left( {#1} , {#2} \right) _{\C ^N}}}
\renewcommand{\dim}[1]{\ensuremath{\mathrm{dim} \left( {#1} \right) }}
\newcommand{\ran}[1]{\ensuremath{\mathrm{Ran} \left( {#1} \right) }}
\newcommand{\rank}[1]{\ensuremath{\mathrm{rank} \left( {#1} \right) }}
\renewcommand{\ker}[1]{\ensuremath{\mathrm{Ker} ({#1}) }}
\newcommand{\re}[1]{\ensuremath{\mathrm{Re} \left( {#1} \right) }}
\numberwithin{equation}{section}
\numberwithin{subsection}{section}
\newtheorem{thm}[subsection]{Theorem}
\newtheorem{lemma}[subsection]{Lemma}
\newtheorem{prop}[subsection]{Proposition}
\newtheorem{cor}[subsection]{Corollary}
\newtheorem*{thm*}{Theorem}
\theoremstyle{definition}
\newtheorem{defn}[subsection]{Definition}
\newtheorem{remark}[subsection]{Remark}
\newtheorem{eg}[subsection]{Example}
\newtheorem{quest}{Question}
\title[NC Blaschke-Singular-Outer Factorization]{Blaschke-Singular-Outer Factorization of Free Non-commutative Functions}
\author{Michael T. Jury}
\address{\scriptsize University of Florida}
\email{\scriptsize mjury@ad.ufl.edu}
\author{Robert T.W. Martin}
\address{\scriptsize University of Manitoba}
\email{\scriptsize Robert.Martin@umanitoba.ca}
\author{Eli Shamovich}
\address{\scriptsize Ben-Gurion University of the Negev}
\email{\scriptsize shamovic@bgu.ac.il }
\thanks{First named author partially supported by NSF grant DMS-1900364.}
\newcommand{\hardy}{\ensuremath{H^2(\B^d_{\N})}}
\newcommand{\mult}{\ensuremath{H^{\infty}(\B^d_{\N})}}
\begin{document}
\small

\begin{abstract}
By classical results of Herglotz and F.~Riesz, any bounded analytic function in the complex unit disk has a unique inner-outer factorization.  Here, a bounded analytic function is called \emph{inner} or \emph{outer} if multiplication by this function defines an isometry or has dense range, respectively, as a linear operator on the Hardy Space, $H^2$, of analytic functions in the complex unit disk with square-summable Taylor series. This factorization can be further refined; any inner function $\theta$ decomposes uniquely as the product of a \emph{Blaschke inner} function and a \emph{singular inner} function, where the Blaschke inner contains all the vanishing information of $\theta$, and the singular inner factor has no zeroes in the unit disk.

We prove an exact analogue of this factorization in the context of the full Fock space, identified as the \emph{Non-commutative Hardy Space} of analytic functions defined in a certain multi-variable non-commutative open unit ball. 
\end{abstract}

\maketitle
\onehalfspace

\section{Introduction}

Fundamental structure results of Herglotz and Riesz (and later Beurling) \cite{Herg11,Riesz22,Beu48} in the theory of analytic functions in the complex unit disk, $\D$, imply that any uniformly bounded analytic function, $h$, in $\D$ admits a \emph{Blaschke-Singular-Outer factorization}:
$$ h = \underbrace{b}_{\mbox{ \tiny Blaschke}} \cdot \underbrace{s}_{\mbox{\tiny Singular}} \cdot \underbrace{f}_{\mbox{\tiny Outer}},$$ where $b$ is an inner \emph{Blaschke product}, $s$ is a \emph{singular inner} and $f$ is an \emph{outer function}. There are several equivalent definitions of inner and outer functions in the unit disk. We will take operator-theoretic definitions as our starting point as these will most readily generalize to the non-commutative (NC) multi-variable setting of the full Fock space over $\C ^d$.

The Hardy space, $H^2 (\D )$, is the Hilbert space of analytic functions in the disk with square-summable Taylor series coefficients at the origin, and $H^\infty (\D )$ is the unital Banach algebra of all uniformly bounded analytic functions in $\D$. The Hardy algebra, $H^\infty = H^\infty (\D )$ can be identified with the \emph{multiplier algebra} of $H^2$, the algebra of all functions in $\D$ which multiply $H^2$ into itself. That is, if $f \in H^\infty$ and $g \in H^2$, then $f \cdot g = h \in H^2$, and multiplication by $f$ defines a bounded \emph{multiplier}, a bounded linear multiplication operator, $M_f$, on $H^2$.  One can then define $f \in H^\infty$ to be \emph{inner} if the multiplier $M_f$ is an isometry, or \emph{outer} if $M_f$ has dense range.  In particular, multiplication by the independent variable, $z$, defines an isometry on $H^2$, the \emph{shift}, $S = M_z$, so that $H^\infty = \mr{Alg}(I,S) ^{-weak-*}$ and this plays a central role in Hardy Space Theory \cite{Nik-shift,NF}. Blaschke and singular inner functions can also be described in purely operator-theoretic terms. Namely, given any $h \in H^\infty$ we define the shift-invariant space 
$$ \scr{S} (h) := \left\{ f \in H^2 \left| \ \frac{f}{h} \in \mr{Hol} (\D ) \right. \right\}, $$ of all $H^2$ functions `divisible by $h$'. Clearly $g \in \scr{S} (h)$ if and only if any zero of $h$ is a zero of $g$ with greater or equal multiplicity, and $\scr{S} (h) \supseteq \ran{M_h}$. An inner function, $\theta \in H^\infty$, is then a \emph{Blaschke inner} or \emph{singular inner} if
$$ \scr{S} (\theta ) = \theta H^2, \quad \mbox{or} \quad  \scr{S} (\theta ) = H^2, $$ respectively. Equivalently, $\theta$ is singular inner if it has no zeroes in the disk.  These are not the usual starting or historical definitions of Blaschke and singular inner functions, but they are equivalent, see \cite[Chapter 5]{Hoff} or \cite[Chapter III.1]{NF}.  The goal of this paper is to extend the seminal Blaschke-Singular-Outer factorization of functions in $H^\infty$ and $H^2$ to elements of the NC Hardy spaces.  

%(The Blaschke-Singular-Outer factorization extends to any function in the entire scale of $H^p$ spaces in the unit disk, $1 \leq p \leq \infty$.)

Recent research has identified the full Fock space over $\C ^d$, \be F^2_d  := \bigoplus _{k=0} ^\infty \left( \C ^d \right) ^{\otimes k}  = \C \oplus \C ^d \oplus \left( \C ^d \otimes \C ^d \right) \oplus \left( \C ^d \otimes \C ^d \otimes \C^d \right) \oplus \cdots, \label{Fockform} \ee
with the \emph{Free} or \emph{Non-commutative Hardy space}, $\hardy$, a canonical NC multi-variable analogue of $H^2 (\D )$ \cite{Pop-freeholo,Pop-freeholo2,Pop-freeharm,DP-inv,BMV,JM-freeCE,JM-NCFatou}. Elements of $H^2 (\B ^d _\N )$ are analytic matrix-valued functions defined in an NC multi-variable open unit ball, $\B ^d _\N$, in several NC matrix-variables \cite{Taylor,KVV,Ag-Mc,Voic,Voic2}: 
\be \B ^d _\N := \bigsqcup _{n=1} ^\infty \B ^d _n; \quad \quad  \B ^d _n := \left( \C ^{n\times n} \otimes \C ^{1\times d} \right) _1. \label{NCdisk} \ee Here, we fix the row operator space structure in $\B ^d _n$. Namely, any $d-$tuple of $n\times n$ matrices, $Z = (Z_1 , \cdots , Z_d ) \in \B ^d _n$, can be viewed as a linear map from $d$ copies of $\C ^n$ into one copy. The NC unit ball consists of the strict \emph{row contractions}, i.e, the $d-$tuples satisfying
$$ Z Z ^* = Z_1 Z_1 ^* + \cdots + Z _d Z_d ^* < I. $$ 

Elements of the full Fock space can be identified with power series in $d$ non-commuting variables with square-summable coefficients (see Section \ref{sec:prelim}). That is, any $f \in F^2_d$ is a power series:
$$ f(\mf{z} ) := \sum _{\alpha \in \F ^d } \hat{f} _\alpha \mf{z} ^\alpha, $$ where $\F ^d$, \emph{the free monoid on $d$ generators}, is the set of all words in the $d$ letters $\{ 1 , ... , d \}$, and given any word $\alpha = i_1 \cdots i_n,$ $i_k \in \{ 1 , ... , d \}$, $\mf{z} ^\alpha := \mf{z} _{i_1} \cdots \mf{z} _{i_n}$. At first sight this may appear to have little bearing to classical Hardy Space Theory and analytic function theory in the disk. However, foundational work of Popescu has shown that if $Z := (Z_1 , \cdots  , Z_d ) : \cH \otimes \C ^d \rightarrow \cH$ is any strict row contraction on a Hilbert space, $\cH$, then the above formal power series for $f$ converges absolutely in operator norm when evaluated at $Z$ (and uniformly on compacta) \cite{Pop-freeholo,SSS}. It follows that any $f \in F^2 _d$ can be viewed as a locally bounded \emph{free non-commutative function} in the NC open unit ball, $\B ^d _\N$ \cite{KVV}. That is, we can view $F^2 _d$ as the NC Hardy space, $\hardy$, the Hilbert space of all (analytic) free NC functions in $\B ^d _\N$ with square-summable Taylor series coefficients. Non-commutative $H^\infty$, $H^\infty (\B ^d _\N )$ can then be defined as the unital Banach algebra of uniformly bounded free NC functions in the NC open unit ball, and as in the single-variable setting, this can be identified (completely isometrically \cite{Pop-freeholo,SSS}) with the \emph{left multiplier algebra} of $H^2 (\B ^d _\N )$, the algebra of all free NC functions in $\B ^d _\N$ which left multiply the NC Hardy space, $H^2 (\B ^d _\N )$ into itself. Furthermore, again in exact analogy with classical Hardy Space Theory, left or right multiplication by the independent NC variables define isometries on the NC Hardy space: 
$$ L_k := M^L _{Z_k}, \quad R_k := M^R _{Z_k}, \quad \quad 1 \leq k \leq d, $$ and these have pairwise orthogonal ranges $L_k ^* L_j = I_{H^2} \delta _{k,j}$, so that the row operator: $L := \left( L_1 , L_2 , \cdots , L_d \right) : H^2 (\B ^d _\N ) \otimes \C ^d \rightarrow H^2 (\B ^d _\N )$ is an isometry which we call the \emph{left free shift}. The NC Hardy algebra, $H^\infty (\B ^d _\N)$ is equal to $\mr{Alg} (I,L) ^{-weak-*}$, the \emph{left free analytic Toeplitz algebra}. This algebra and its norm closed analogue were first studied by Popescu in \cite{Pop-vN} (see also \cite{Pop-multi}). Later they were also studied by Davidson and Pitts \cite{DP-inv,DP-alg, DP-pick,DLP-ncld}, Arias and Popescu \cite{AriasPopescu}, and further by Popescu \cite{Pop-dil,Pop-freeholo,Pop-freeholo2,Pop-freeharm}. In greater generality this setup was extensively studied by Muhly and Solel \cite{MS04,MS11,MS13}. 

Popescu was the first to discover an NC analogue of the classical Beurling theorem for $\hardy$ in \cite[Theorem 2.2]{Pop-charfun} (see also \cite[Theorem 4.2]{Pop-multfact} for the first instance of the inner-outer factorization). The theorem is also proven in \cite[Theorem 2.1]{AriasPopescu} and was later proven independently by Davidson and Pitts \cite[Corollary 2.2]{DP-inv}. Inner-outer factorization of NC functions in $\hardy$ or $\mult$ is an easy consequence of this; any $H \in \mult$ can be factored as $H = \Theta \cdot F$, where $\Theta$ is an NC inner (an isometric left multiplier) and $F$ is an NC outer, \emph{i.e.} $M^L_F = F(L)$ has dense range. Equivalently $F = M^L_F 1$ is an $R-$cyclic vector, and this second definition extends to $F \in \hardy$. In this paper, we refine these results to include an exact NC analogue of the Blaschke-Singular-Outer factorization. An NC Blaschke inner $B \in \mult$ will be an NC inner whose range is completely determined by its left `NC variety' in the NC unit ball. An NC inner left multiplier $S$ will be singular if  $S (Z)$ is invertible for any $Z \in \B^d_{\N}$.

%The main result of the paper is the following theorem:

\begin{thm*}[NC Blaschke-Singular-Outer factorization, Theorem \ref{NCBSOthm}]
Every non-zero $H \in H^{p}(\B^d_{\N})$, $p \in \{ 2 , \infty \}$, can be factored as a product $H = B \cdot S \cdot F$ for $B,S \in \mult$, where $B$ is an NC Blaschke inner with the same NC variety as $H$, $S$ is an NC singular inner and $F \in H^p (\B ^d _\N )$ is an NC outer function. The factors are unique up to scalars of unit modulus.
\end{thm*}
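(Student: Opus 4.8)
The plan is to start from the already-established NC inner-outer factorization (Popescu-Davidson-Pitts): write $H = \Theta \cdot F$ with $\Theta$ an NC inner and $F$ an NC outer. The entire problem then reduces to factoring the inner part $\Theta$ as a product $B \cdot S$ of an NC Blaschke inner and an NC singular inner with the correct divisibility/variety properties. To do this I would first need a workable NC analogue of the "shift-invariant divisor space" $\scr{S}(h)$ from the classical setting. The natural candidate is, for a given inner $\Theta$, to look at the $R$-invariant (equivalently, $L$-invariant) subspace generated by the relevant left NC variety data, and to compare $\Theta H^2(\B^d_\N)$ with the subspace of $H^2(\B^d_\N)$ consisting of NC functions that vanish on the NC variety $\mathcal{Z}(\Theta) := \{ Z \in \B^d_\N : \det \Theta(Z) = 0 \}$ to appropriate order. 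The NC Blaschke factor $B$ should be defined as the inner function whose range $B H^2$ equals precisely the $R$-invariant subspace determined by this variety.

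**The key steps, in order, would be:**

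\begin{enumerate}
\item \emph{Define the NC variety and the associated divisor space.} For $\Theta$ inner, introduce $\mathcal{Z}(\Theta) \subseteq \B^d_\N$ (the left NC zero variety, appropriately counting multiplicity via the structure of $\ker \Theta(Z)^*$ fiberwise) and the $R$-invariant subspace $\mathcal{S}(\Theta) \subseteq H^2(\B^d_\N)$ of NC functions "divisible by $\Theta$" in the variety sense. Show $\Theta H^2 \subseteq \mathcal{S}(\Theta)$ always.
\item \emph{Construct the Blaschke factor.} By the NC Beurling theorem, the $R$-invariant subspace $\mathcal{S}(\Theta)$ equals $B H^2$ for some NC inner $B$ (unique up to a unimodular scalar), and this $B$ has, by construction, the same NC variety as $\Theta$. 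This is the definition of NC Blaschke inner.
\item \emph{Extract the singular factor.} Since $B H^2 = \mathcal{S}(\Theta) \supseteq \Theta H^2$, the inner $\Theta$ is left-divisible by $B$: write $\Theta = B \cdot S$ with $S \in \mult$ inner. Verify that $S(Z)$ is invertible for every $Z \in \B^d_\N$, i.e. $S$ has empty NC variety, so $S$ is NC singular inner.
\item \emph{Assemble and prove uniqueness.} Combine to get $H = B \cdot S \cdot F$. For uniqueness up to unimodular scalars, use that the NC Beurling theorem determines $B$ (hence $\mathcal{S}(\Theta)$) uniquely up to a right-unitary; pin down the residual freedom to a scalar using $R$-cyclicity of the outer factor and the empty-variety property of $S$.
\end{enumerate}

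**The main obstacle** I expect is Step 3, extracting the singular factor and proving it has empty variety. In the classical disk setting this rests on the factorization of the measure $d\mu$ defining the inner function into absolutely continuous (Blaschke) and singular parts, together with the fact that a zero of $\Theta$ must be a zero of either $b$ or $s$ but $s$ has none. In the free NC setting there is no single scalar measure to decompose; one must instead argue at the level of operators and varieties directly. The delicate point is showing that once the full variety of $\Theta$ has been "absorbed" into $B$ via the divisor space $\mathcal{S}(\Theta)$, the quotient inner $S$ genuinely has no zeros anywhere in $\B^d_\N$ — this requires that the NC variety of a product factors as a union of the varieties of the factors (with multiplicity), so that $\mathcal{Z}(\Theta) = \mathcal{Z}(B)$ forces $\mathcal{Z}(S) = \emptyset$. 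Establishing this variety-of-a-product identity, and the correct notion of multiplicity making the divisor space $\mathcal{S}(\Theta)$ exactly match $B H^2$ rather than something strictly larger, is where the real work of the proof lies; the surrounding inner-outer reduction and the uniqueness argument should follow relatively formally from the NC Beurling theorem.
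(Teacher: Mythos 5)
Your opening moves coincide with the paper's (Proposition \ref{rowcolprop}): inner--outer reduction, the NC Beurling theorem applied to the singularity space $\scr{S}(\Theta)$, and Douglas factorization to split $\Theta = B\cdot S$. But there is a genuine gap in your Step 2: you assume Beurling hands you a single \emph{scalar} inner $B$ with $B\,\hardy = \scr{S}(\Theta)$, unique up to a unimodular scalar. In the free setting the Beurling theorem only produces a \emph{row} inner $B(L) \in \mult \otimes \C^{1\times N}$ with $N \in \N \cup \{\infty\}$ equal to the wandering dimension of $\scr{S}(\Theta)$, and unlike in the disk there is no a priori reason for $N = 1$: $R$-invariant subspaces of the Fock space typically have wandering dimension greater than one. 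Proving $N=1$ is precisely the analytic heart of the paper and occupies Sections 4 and 5: it needs Theorem \ref{InvOut} (NC outer $f$ implies $f(rL)$ invertible), the construction of an explicit left inverse $H_r(L)$ for the column $S(rL)$ (Corollary \ref{NCsingular}), the idempotent $E_r(L) = S(rL)H_r(L)$ whose complement parametrizes $\ker{B(rL)}$ (Corollary \ref{idempotent}), the SOT-convergence $T_r \to T$ of the projections onto $\ran{B(rL)}^\perp$ --- which crucially exploits the Blaschke property that this orthocomplement is spanned by kernels $K\{Z,y,v\}$ with $\|Z\|<1$, hence analytic past the boundary under dilation (Proposition \ref{conranproj}) --- and the wandering-dimension lemma (Lemma \ref{monrank}), culminating in injectivity of $B(rL)$ (Corollary \ref{Binjcor}); then $S(rL)H(rL) = I \otimes I_N$ combined with $H(rL)S(rL) = I_{\hardy}$ forces $N \cdot I = I$, so $N = 1$. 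None of this follows ``relatively formally'' from Beurling. (A lesser discrepancy: the paper's variety consists of directional pairs $(Z,y)$ with $y^*\Theta(Z) \equiv 0$, including the infinite level $n=\infty$, not the determinantal locus $\det\Theta(Z)=0$; the infinite level is actually used in the proofs.)

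Second, the mechanism you propose for Step 3 --- a ``variety of a product equals union of varieties'' identity forcing $\mathcal{Z}(S) = \emptyset$ --- is unavailable for left NC varieties. From $y^* S(Z) = 0$ one cannot conclude $y^*\Theta(Z) = y^* B(Z) S(Z) = 0$, because the matrix evaluations $B(Z)$ and $S(Z)$ need not commute; only the inclusion $\mr{Sing}(B) \subseteq \mr{Sing}(\Theta)$ is automatic, and that goes the wrong way for your argument. The paper instead derives singularity of $S$ directly from invertibility: once $N=1$, the left inverse $H(rL)$ is a two-sided inverse of $S(rL)$, so $S$ is pointwise invertible throughout the ball, its NC variety is empty, and $\scr{S}(S) = \hardy$. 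So while you correctly located the obstacle (the singularity of $S$), the tool you propose to overcome it fails, and the genuine resolution runs through exactly the multiplicity-one collapse that your Step 2 assumed for free.
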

The left NC variety of any NC Hardy space function is formally defined in Definition \ref{leftsingdef} below. Roughly speaking, the NC variety is the collection of directional zeroes in the sense of \cite{HelMcC04} and \cite{HMP07}. When $d=1$, our NC Blaschke-Singular-Outer factorization theorem recovers the classical factorization with a new operator-theoretic proof, see Corollary \ref{dequals1}.

\subsection{Outline} Section \ref{sec:prelim} contains the necessary background on the NC unit ball, the NC Hardy space, the NC Hardy algebra $\mult$, and its commutant --- the algebra of right multipliers. In Section \ref{sec:nc_var} we discuss the (left) NC varieties cut out as degeneracy loci of functions in the NC Hardy spaces.  Examples of computations of NC Blaschke inner and singular inner functions are provided in Section \ref{sec:examples}. The main theorem stated above is proven in Section \ref{wanddim}. Lastly, the appendix contains a factorization result for NC idempotent-valued functions obtained while working on the main theorem and is of independent interest in our opinion.

\section{Preliminaries: Fock Space as the NC Hardy space} \label{sec:prelim}

The free monoid, $\F ^d$ is the set of all words in $d$ letters $\{ 1, ... , d \}$. This is the universal monoid on $d$ generators, with product given by concatenation of words, and unit $\emptyset$, the empty word containing no letters. The Hilbert space of square summable sequences indexed by $\F ^d$, $\ell ^2 (\F ^d )$, and $F^2 _d$, the direct sum of all tensor powers of $\C ^d$, \emph{i.e.} full Fock space over $\C ^d$, are naturally isomorphic (see equation \ref{Fockform}).  This isomorphism is implemented by the unitary map $e _{i_1 \cdots i_k} \mapsto e_{i_1} \otimes \cdots \otimes e_{i_k}$, $i_k \in \{ 1, ... , d \}$, and $e_\emptyset \mapsto 1$ where $\{ e_{j} \}$ denotes the standard basis of $\C ^d$, and $1$ is the vacuum vector of the Fock space (which spans the subspace $\C \subset F^2 _d$). Under this isomorphism the left free shifts become the left creation operators on the Fock space which act by tensoring on the left with the standard basis vectors of $\C ^d$. In the sequel we identify the free square-summable sequences, $\ell ^2 (\F ^d )$ and the Fock space $F^2 _d$ with the NC Hardy space, denoted by $H^2 (\B ^d _\N )$:
$$ H^2 (\B ^d _\N ) = \left\{  f \in \mr{Hol} (\B ^d _\N )  \left| \ f(Z) = \sum _{\alpha \in \F ^d } \hat{f} _\alpha Z^\alpha, \ \sum |\hat{f} _\alpha | ^2 < \infty \right.  \right\}. $$ Similarly, we will use the notation $H^\infty (\B ^d _\N ) := \mr{Alg} (I, L) ^{-weak-*}$,
$$ H^\infty (\B ^d _\N ) =   \left\{ f \in \mr{Hol} (\B ^d _\N )  \left| \ \sup _{Z \in \B ^d _\N } \| f (Z ) \| < \infty \right. \right\}.$$ Any element $F \in H^\infty (\B ^d _\N )$ is identified with the linear operator, $F(L) := M^L _{F}$, of left multiplication by $F(Z)$. As described in the introduction, $\mult$ can be identified with the left multiplier algebra of $\hardy$, and it immediately follows that $\mult \subset \hardy$. Any $f \in \hardy$ is a locally bounded free non-commutative function in the sense of modern Non-commutative Function Theory \cite{Taylor2,KVV,Ag-Mc}. That is, $f$ respects the grading, direct sums and similarities which preserve its NC domain, $\B ^d _\N$.  Any locally bounded free NC function (under mild, minimal assumptions on its NC domain) is automatically holomorphic, \emph{i.e.} it is both G\^{a}teaux and Fr\'{e}chet differentiable at any point $Z \in \B ^d _\N$ and has a convergent Taylor-type power series expansion about any point \cite[Chapter 7]{KVV}. 

The \emph{right free shifts}, $R_k = M^R _{Z_k}$ are unitarily equivalent to the left free shifts $L_k = M^L _{Z_k}$ via the transpose unitary on $\ell ^2 (\F ^d )$, $U_\dag$,
$$ U_\dag e_\alpha := e_{\alpha ^\dag}, $$ where if $\alpha = i_1 \cdots i_n \in \F ^d$, then
$ \alpha ^\dag := i_n \cdots i_1, $ its transpose. 

\subsection{Fock space as an NC reproducing kernel Hilbert space}
The Hardy space, $H^2 (\D) $ can be equivalently defined using Reproducing Kernel Theory. Namely, $H^2$ is the reproducing kernel Hilbert space (RKHS) of the \emph{Szeg\"{o} kernel}:
$$ k(z,w) := \frac{1}{1-zw^*}. $$ As in the single-variable setting, the Free Hardy Space $\hardy$ can be equivalently defined using (non-commutative) reproducing kernel theory \cite{BMV}. All non-commutative reproducing kernel Hilbert spaces (NC-RKHS) in this paper will be Hilbert spaces of free NC functions in the NC unit ball, $\B ^d _\N$. Any Hilbert space, $\cH$ of NC functions in $\B ^d _\N$, is a NC-RKHS if the linear point evaluation map, $K_Z ^* : \cH  \rightarrow \left( \C ^{n\times n}, \mr{tr} _n \right)$ is bounded for any $Z \in \B ^d _n$. We will let $K_Z$, the \emph{NC kernel map}, denote the Hilbert space adjoint of $K_Z ^*$, and, for any $y, v \in \C ^n$, 
$$ K \{ Z , y , v \} := K_Z (yv^*) \in \cH. $$ Furthermore, given $Z \in \B ^d _n, y, v \in \C ^n$ and $W \in \B ^d _m,  x, u \in \C ^m$ the linear map
$$ K(Z,W) [ \cdot ] : \C ^{n\times m } \rightarrow \C ^{n\times m}, $$ defined by
$$ \ipcn{y}{K(Z,W)[vu^*]x} := \ip{K \{Z , y, v \} }{ K \{ W , x, u \} } _{\cH}, $$ is completely bounded for any fixed $Z,W$ and completely positive if $Z=W$.  This map is called the completely positive non-commutative (CPNC) kernel of $\cH$. As in the classical theory there is a bijection between CPNC kernel functions on a given NC set and NC-RKHS on that set \cite[Theorem 3.1]{BMV}, and if $K$ is a given CPNC kernel on an NC set, we will use the notation $\cH _{nc} (K)$ for the corresponding NC-RKHS of NC functions.  The NC Hardy space, $\hardy$, is then the non-commutative reproducing kernel Hilbert space (NC-RKHS) corresponding to the CPNC Szeg\"{o} kernel on the NC unit ball, $\B ^d _\N$:
$$ K(Z,W) [ \cdot ] := \sum _{\alpha \in \F ^d} Z^\alpha [ \cdot ]  (W^\alpha) ^*; \quad \hardy = \cH _{nc} (K). $$ Adjoints of left multipliers have a familiar and natural action on NC kernel vectors:
\be F(L) ^* K \{ Z , y , v \} = K \{ Z , F(Z) ^* y , v \}. \label{NCadjmult} \ee

For our purposes, it will be convenient, as in \cite{Pop-freeholo}, to view elements of the NC Hardy spaces as holomorphic (locally bounded) NC functions on all strict row contractions on a separable Hilbert space. That is, we will add the infinite level to $\B ^d _\N$:
\be \B ^d _{\aleph _0} := \B ^d _\N \bigsqcup \B ^d _\infty, \ee where 
$$ \B ^d _\infty := \left( \C ^{\infty \times \infty } \otimes \C ^{1\times d} \right) _1, $$ denotes the set of all strict row contractions on the separable Hilbert space $\C ^\infty := \ell ^2 (\N )$, and $\C ^{\infty \times \infty } := \mc{L} (\ell ^2 (\N )$. Here, and throughout, the notation $\C ^{n \times m}$ denotes the $n\times m$ matrices with entries in $\C$, so that $\C ^{1\times d}$ is a row with $d$ entries. We will write $\C ^d$ in place of $\C ^{d \times 1}$.

\section{NC Varieties} \label{sec:nc_var}

Let $H(Z)$ be any free NC function in one of the NC Hardy spaces $\hardy$ or $\mult$. The left NC variety of $H$ is the appropriate analogue of a variety in our NC multi-matrix-variable setting. The definition below is stated more generally for operator-valued left multipliers between vector-valued NC Hardy spaces. Let $\mc{H}, \J$ be separable or finite-dimensional Hilbert spaces. We will write $\mult \otimes \mc{L} (\cH, \J )$ in place of the weak operator topology (WOT) closure of this algebraic tensor product, viewed as left multiplication operators from $\hardy \otimes \cH$ into $\hardy \otimes \J$.

\begin{remark} \label{matrixNC}
Any element $F (L) \in \mult \otimes \C ^{n \times m}$ or $\mult \otimes \mc{L} (\J , \cH )$ can be viewed as a matrix- or operator-valued function whose entries are bounded, free non-commutative functions in $\B ^d _\N$ or $\B ^d _{\aleph _0}$. Note, however, that $F (Z)$, viewed as a function in $\B ^d _\N$ need not be NC in the sense that it will generally not preserve direct sums. It can, however, be identified with a matrix-valued NC function, $\wt{F} (Z)$ (\emph{i.e.} $\wt{F}$ does preserve direct sums, joint similarities and the grading) defined by conjugating $F(Z)$ with appropriate basis permutation matrices \cite[pp. 65--66]{KVV-rational2}, \cite[p.38]{PV-realize}.  
\end{remark}

\begin{defn} \label{leftsingdef}
Given any $H \in \mult \otimes \mc{L} (\mc{H}, \J) $ or $H \in \hardy \otimes \cH$, the \emph{left singularity locus} or \emph{left NC variety} of $H$ is:
\begin{align*} \mr{Sing}  (H ) & := \bigsqcup _{n \in \N \cup \{ \infty \} } \mr{Sing} _n (H )  \\ \mr{Sing} _n (H ) & :=\left\{ \left. (Z, y ) \in \B ^d _n \times \C ^n \  \right| \ y^* H (Z) \equiv 0  \right\}. \end{align*} 

The (left) \emph{singularity space} of $H$ is:
$$ \scr{S} ( H ) := \{ h \in \hardy \otimes \J \ | \ y^* h(Z) \equiv 0 \ \forall \ (Z,y) \in \mr{Sing} (H) \}. $$
\end{defn}

The singularity space of any such $H$ (in vector-valued NC $H^2$ or operator-valued NC $H^\infty$) is clearly right shift invariant, and 
$$ \scr{S} (H) \supseteq \ran{H(L)}. $$ In the above $y^* H(Z) \equiv 0$ for $H (L) \in \mult \otimes \mc{L} (\cH, \J )$ and $Z \in \B ^d _n$, $y \in \C ^n$ means that
$$ \ip{y \otimes g}{H(Z) x \otimes h }_{\C ^n \otimes \J} =0, $$ for any $h \in \cH, g \in \J$, and any $x \in \C^n$.
\begin{remark}
Note that these varieties differ from the ones considered in \cite{Ami57,SSS,SSS2} since these varieties correspond to a left ideal in the algebra of right multipliers and not to two-sided ideals. Similar varieties in the case of NC polynomials and NC rational functions were considered by Helton and McCullough \cite{HelMcC04} and Helton, Klep and Putinar \cite{HMP07}. The projection onto the first coordinate gives the variety of determinental zeroes considered, for example, in \cite{HKM18}.
\end{remark}

\begin{remark}
Let $H \in H^p (\B ^d _\N)$, $p\in \{ 2, \infty \}$, and let $\pi \colon \bigsqcup_{n \in \N } \B^d_n \times \C^n \to \B^d_{\N }$ be the projection onto the first coordinate. We claim that if $\pi (\mr{Sing}(H)) = \B^d_{\N }$, then $H\equiv 0$. In other words, if $H$ is not identically zero, then one cannot have $\mr{det} H(Z) =0$ for all $Z \in \B ^d _\N$. Indeed, by \cite[Theorem 5.7]{KVV-germs} the inner rank of $H$ considered as a $1\times 1$ matrix over the ring of germs of uniformly analytic NC functions at $0$ is given by $\max_n \left\{ \left. \frac{\rank{H(Z)}}{n} \right| Z \in  \text{a neighbourhood of 0}  \cap \B ^d _n \right\}$. This latter number is less than $1$ since $\det H(Z) = 0$ for every $Z \in \B^d_{\N}$. Since the inner rank of $H$ is either $1$ or $0$ we conclude that the inner rank of $H$ is $0$. However, this can only happen, if $H \equiv 0$.
\end{remark}

\begin{defn}
An NC left multiplier, $H (L) \in \mult \otimes \mc{L} (\cH , \J )$, is:
\bn
\item \emph{inner}, if $H (L )$ is an isometry. 
\item \emph{outer}, if $H(L)$ has dense range in $\hardy\otimes \J$.
\en
An element of Fock space, $h \in \hardy$, is called \emph{NC outer} if it is cyclic for the right shifts.
\end{defn}
The second definition of an NC outer $h \in \hardy$ is equivalent to the first if $H \in \mult$. That is, if $H (L) \in \mult$, then $h:= H(L) 1 \in \hardy$ is NC outer if and only if $H$ is NC outer. (In fact, any element $h \in \hardy$ can be identified with a closed, densely-defined and generally unbounded left multiplier, $h(L)$ in the NC Smirnov class \cite{JM-freeSmirnov}. Under this identification, $h \in \hardy$ is NC outer if and only if $h(L)$ has dense range.)
\begin{defn}
An NC inner (isometric) left multiplier $\Theta \in \mult \otimes \mc{L} (\mc{H}, \mc{J})$ is:
\bn
    \item \emph{Blaschke} if $\ran{\Theta (L)} = \scr{S} (\Theta )$.

	\item \emph{singular}  if $\scr{S}(\Theta) = \hardy \otimes \mc{J}$.
%    \item \emph{Singular} if $\Theta (Z)$ has a left inverse for any $Z \in \B ^d _{\aleph _0}$.
\en
\end{defn}
%\begin{remark}
%A scalar NC inner $S \in \mult$ is singular if and only if it is pointwise invertible in the NC unit ball, $\B ^d _{\aleph _0}$, and this is equivalent to $\mr{Sing} (S) = \emptyset$ or $\scr{S} (S) = \hardy$. In particular, $S(rL)$ is invertible, hence outer for any $0\leq r<1$. 
%\end{remark}
\begin{remark}
A scalar NC inner $S \in \mult$ is singular if and only if it is pointwise invertible in the NC unit ball, $\B ^d _{\aleph _0}$. Indeed, since the constant functions are in $\scr{S}(S)$, the singularity locus of $S$ is empty. Thus for every $0 < r < 1$, the operator $S(rL)$ has dense range, i.e, it is an outer. By Theorem \ref{InvOut} $S(rL)$ is invertible and thus $S(Z)$ is invertible for every $Z \in \B^d_{\aleph_0}$.
\end{remark}

For simplicity, the following results are stated for scalar-valued NC left multipliers. These extend naturally to operator-valued left multipliers between vector-valued NC Hardy spaces. 
\begin{prop} \label{singlocus}
Given any $H \in H^p (\B ^d _\N)$, $p \in \{ 2, \infty \}$, $\mr{Sing}(H)$ satisfies the following properties:
\begin{enumerate}
%\item For $n\in \N$, we can identify $H (Z)$ with a self-map of the trivial holomorphic vector bundle $\cO(\B^d_n)^{\oplus}$. The support of the cokernel is the hypersurface $\det H (Z) = 0$. The cokernel is given precisely by the projection onto the first co-ordinate of the points in $\mr{Sing}(H)$, that admit a non-zero $y$.

\item If $(Z,y)$, $(W,x) \in \mr{Sing}(H)$ and $c \in \C$, then $(Z \oplus W, y \oplus c\cdot x) \in \mr{Sing}(H)$.

\item For $S \in \mr{GL}_n$ and $(Z,y) \in \mr{Sing}(H)$, such that $S^{-1} Z S \in \B_n^d$, we have that $(S^{-1} Z S, (S^*) ^{-1} y ) \in \mr{Sing}(H)$.
\end{enumerate}
\end{prop}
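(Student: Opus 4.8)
The plan is to derive both claims directly from the defining structural properties of free NC functions recalled in Section \ref{sec:prelim}: every $H \in H^p(\B^d_\N)$, whether $p=2$ or $p=\infty$, is a locally bounded free NC function, hence it respects direct sums and those joint similarities that preserve its domain $\B^d_\N$. Once these two identities are in hand, each part reduces to elementary linear algebra on the cokernel vectors $y$ (recall that $y^*H(Z)\equiv 0$ is the same as $H(Z)^*y=0$).

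For part (1), I would first observe that the direct sum of two strict row contractions is again a strict row contraction, so $Z\oplus W\in\B^d_{n+m}$ whenever $Z\in\B^d_n$ and $W\in\B^d_m$, and the evaluation is legitimate. Since $H$ respects direct sums, $H(Z\oplus W)=H(Z)\oplus H(W)$. Writing the block row vector $(y\oplus c\cdot x)^*=(y^*,\ \bar{c}\,x^*)$ and multiplying it against the block-diagonal matrix $H(Z)\oplus H(W)$ gives
\[ (y\oplus c\cdot x)^*\,H(Z\oplus W)=\left( y^*H(Z),\ \bar{c}\,x^*H(W)\right)=0, \]
since $y^*H(Z)=0$ and $x^*H(W)=0$ by hypothesis; the scalar $c$ only rescales the second block, which already vanishes. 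Hence $(Z\oplus W,\ y\oplus c\cdot x)\in\mr{Sing}(H)$.

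For part (2), the hypothesis $S^{-1}ZS\in\B^d_n$ is precisely what guarantees that this is a similarity preserving the domain, so the NC-function property applies and yields $H(S^{-1}ZS)=S\,H(Z)\,S^{-1}$. I would then transport the cokernel vector through this identity: taking adjoints gives $H(S^{-1}ZS)^*=(S^*)^{-1}H(Z)^*S^*$, so for $\hat{y}:=(S^*)^{-1}y$ one computes
\[ H(S^{-1}ZS)^*\,\hat{y}=(S^*)^{-1}H(Z)^*S^*(S^*)^{-1}y=(S^*)^{-1}H(Z)^*y=0, \]
using $H(Z)^*y=0$ in the last step. Equivalently $\hat{y}^*H(S^{-1}ZS)=0$, which is exactly the assertion $(S^{-1}ZS,\ (S^*)^{-1}y)\in\mr{Sing}(H)$.

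Neither step is deep, so the main obstacle is really a matter of care rather than difficulty. The first point to watch is the justification that the structural identities $H(Z\oplus W)=H(Z)\oplus H(W)$ and $H(S^{-1}ZS)=S\,H(Z)\,S^{-1}$ hold for merely square-summable $H\in\hardy$ and not only for bounded multipliers; here I would lean on the fact, recalled in Section \ref{sec:prelim}, that every element of $\hardy$ is already a locally bounded free NC function respecting the grading, direct sums, and similarities. The second, and genuinely error-prone, point is the conjugation bookkeeping in part (2): one must match the grouping of the similarity $S^{-1}ZS$ with the convention fixed in Section \ref{sec:prelim} so that the cokernel transforms by $(S^*)^{-1}$ rather than by $S^*$. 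I would double-check this direction explicitly, since an inverse placed on the wrong side is the only way this otherwise routine argument could go astray.
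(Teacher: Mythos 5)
The paper offers no proof of Proposition \ref{singlocus} at all --- it is stated as an immediate consequence of the axioms of free NC functions, and the only place it is later invoked (a parenthetical remark in the proof of Proposition \ref{conranproj}) uses item (1) alone. Your part (1) is exactly the routine argument the paper has in mind, and it is correct: $Z \oplus W$ is again a strict row contraction, $H(Z\oplus W) = H(Z) \oplus H(W)$ because elements of $\hardy$ (not just $\mult$) are locally bounded free NC functions, and the scalar $c$ is harmless since the condition $y^* H(Z) \equiv 0$ is linear in $y$. Your point of caution about extending the structural identities from $\mult$ to $\hardy$ is also resolved correctly by the paper's setup.

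Part (2), however, rests on a false identity. You wrote $H(S^{-1}ZS) = S\,H(Z)\,S^{-1}$, but the similarity axiom for free NC functions, in the convention of the reference \cite{KVV} that Section \ref{sec:prelim} points to, reads $f(TZT^{-1}) = T f(Z) T^{-1}$ whenever both points lie in the domain; taking $T = S^{-1}$ gives $H(S^{-1}ZS) = S^{-1} H(Z) S$, the opposite grouping. With the correct identity your computation produces $\left((S^*)^{-1}y\right)^* H(S^{-1}ZS) = y^* S^{-2} H(Z) S$, which has no reason to vanish, and indeed it does not: for $d=1$ and $H(Z) = Z$, take $Z = \tfrac12 \left(\begin{smallmatrix} 0 & 1 \\ 0 & 0 \end{smallmatrix}\right)$, $y = \left(\begin{smallmatrix} 0 \\ 1 \end{smallmatrix}\right)$, and $S = \left(\begin{smallmatrix} 1 & 0 \\ t & 1 \end{smallmatrix}\right)$ with small real $t \neq 0$; then $y^* Z = 0$ and $S^{-1}ZS \in \B^1_2$, yet $\left((S^*)^{-1}y\right)^*(S^{-1}ZS) = (-t^2, \, -t) \neq 0$. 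The vector that does work is $\hat{y} := S^* y$, since $(S^* y)^* H(S^{-1}ZS) = y^* S \, S^{-1} H(Z) S = y^* H(Z) S = 0$. So your closing worry about ``an inverse placed on the wrong side'' was precisely the right worry, but you resolved it in the wrong direction --- and your computation inadvertently reveals that the proposition as printed carries the same slip: as literally stated, with the similarity $S^{-1}ZS$ paired with the vector $(S^*)^{-1}y$, item (2) is false; the correct pairings are $(S^{-1}ZS, \, S^* y)$ or, equivalently, $(S Z S^{-1}, \, (S^*)^{-1} y)$. Since only item (1) is used downstream, nothing else in the paper is affected, but your proof of (2) must be repaired by replacing both the intertwining identity and the transformed cokernel vector as above.
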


\begin{lemma}
Given any $H \in \mult$ or $\hardy$, the set $\scr{S} (H )$ is a closed, $R-$invariant subspace and 
$$ \scr{S} (H ) ^\perp = \bigvee _{(Z,y) \in \mr{Sing} (H ) } K \{ Z, y , v \}. $$ 
\end{lemma}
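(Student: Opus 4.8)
The plan is to reduce everything to a single reproducing identity for the NC kernel vectors, after which all three assertions become formal. The key identity I would record first is
\[ \ip{h}{K \{ Z, y, v \} } = y^* h(Z) v, \qquad h \in \hardy, \ (Z,y) \in \B ^d _n \times \C ^n, \ v \in \C ^n. \]
This is immediate from the definitions. Since $K \{ Z, y, v \} = K_Z (yv^*)$ and $K_Z$ is the Hilbert space adjoint of the bounded point evaluation $K_Z ^* : \hardy \to (\C ^{n\times n}, \mr{tr} _n )$, $K_Z ^* h = h(Z)$, we compute
\[ \ip{h}{K_Z (yv^*)} = \mr{tr} _n \big( (yv^*)^* h(Z) \big) = \mr{tr} _n \big( v y^* h(Z) \big) = y^* h(Z) v, \]
the last step being cyclicity of the trace applied to the rank-one factor $vy^*$ (valid also at the infinite level, where $vy^*$ is trace class and $h(Z)$ bounded).

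Given this identity, the description of the singularity space is purely formal. For a fixed pair $(Z,y)$, the condition $y^* h(Z) \equiv 0$ means $y^* h(Z) v = 0$ for every $v \in \C ^n$, which by the identity is exactly the statement that $h \perp K \{ Z, y, v \}$ for all $v \in \C ^n$. Letting $(Z,y)$ range over $\mr{Sing}(H)$ then gives
\[ \scr{S} (H ) = \Big( \bigvee _{(Z,y) \in \mr{Sing} (H ) } K \{ Z, y , v \} \Big) ^\perp . \]
In particular $\scr{S}(H)$ is the orthogonal complement of a family of vectors, hence automatically a closed (linear) subspace; and taking orthogonal complements of both sides, using that $\bigvee$ already denotes a closed subspace, yields the claimed formula for $\scr{S}(H) ^\perp$.

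It remains to check $R$-invariance, and here I would use that right multiplication by the coordinate $\mf{z} _k$ acts on values by $(R_k h)(Z) = h(Z) Z_k$. Then for $h \in \scr{S}(H)$ and any $(Z,y) \in \mr{Sing}(H)$,
\[ y^* (R_k h)(Z) = \big( y^* h(Z) \big) Z_k = 0, \]
so $R_k h \in \scr{S}(H)$ for each $k$, i.e. $\scr{S}(H)$ is $R$-invariant. I do not anticipate a genuine obstacle: the entire argument rests on the reproducing identity, and the only points requiring care are the conjugation and normalization conventions for the trace pairing on $(\C ^{n\times n}, \mr{tr} _n )$ and the verification that $(R_k h)(Z) = h(Z) Z_k$ as NC functions, both of which are standard from the preliminaries. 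The argument is identical for $H \in \mult$ and $H \in \hardy$, since in each case $H(Z)$ is a well-defined (locally bounded) NC function on $\B ^d _{\aleph _0}$ and the kernel vectors $K \{ Z, y, v \}$ lie in $\hardy$.
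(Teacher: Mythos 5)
Your proof is correct and follows essentially the same route as the paper: both arguments hinge on the reproducing identity $\ip{h}{K \{ Z,y,v \}} = y^* h(Z) v$ (up to the choice of which slot is conjugate-linear) and verify $R$-invariance via $(R_k h)(Z) = h(Z) Z_k$. The only difference is cosmetic --- the paper checks closedness by a direct norm-limit argument, whereas you observe, slightly more efficiently, that $\scr{S}(H)$ is by construction the orthogonal complement of the family $\{ K \{ Z,y,v \} \}$ and hence automatically closed, with the formula for $\scr{S}(H)^\perp$ following by taking complements.
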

\begin{proof}
Clearly this is a subspace. If $f \in \scr{S} (H )$ then for any $(Z,y) \in \mr{Sing} (H )$, we have that 
$$ y^* (R_k f) (Z)   =  y^* f(Z) Z_k  = 0, $$ so that $R_k f \in \scr{S} (H )$. 
Observe that $f \in \scr{S} (H )$ if and only if
\ba 0 & = & \ipcn{y}{f(Z) v} \nn \\
& = & \ip{K\{ Z , y ,v \}}{f}_{\hardy}, \nn \ea for all $(Z,y) \in \mr{Sing} (H )$ and all 
$v \in \C ^n$. Hence if $(f_n) \subset \scr{S} (H )$ and $f_n \rightarrow f$ in norm, then for any $(Z,y) \in \mr{Sing} (H )$ so that $Z \in \B ^d _n$, and for any $v \in \C ^n$, 
\ba \ipcn{y}{f(Z)v} & = & \ip{K \{ Z, y , v \} }{f}_{\hardy} \nn \\
& = & \lim _{n\rightarrow \infty } \ip{K \{ Z, y , v \} }{f_n }_{\hardy} \nn \\
& = & \lim \ipcn{y}{f_n (Z) v} = 0. \nn \ea This proves that $\scr{S} (H )$ is closed.
\end{proof}

\begin{lemma}
If $\Theta \in \mult$ is NC inner then the kernels of the NC-RKHS $\left( \Theta (L) \hardy\right) ^\perp$ have the form:
$$ K ^\Theta \{ Z , y ,v \} := K\{ Z , y ,v \} - \Theta (L) K \{ Z, \Theta (Z) ^* y , v \}. $$
\end{lemma}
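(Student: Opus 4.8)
The plan is to realize the space $\mc{M} := \left( \Theta(L) \hardy \right)^{\perp}$ as the range of an explicit orthogonal projection and then to compress the Szeg\"o kernel of $\hardy$ to this subspace. Since $\Theta$ is NC inner, $\Theta(L)$ is an isometry, so $\Theta(L)\Theta(L)^*$ is the orthogonal projection onto $\ran{\Theta(L)} = \Theta(L)\hardy$. Consequently
$$ P := I - \Theta(L)\Theta(L)^* $$
is the orthogonal projection onto $\mc{M}$, and this is the object I would compute with.

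The second ingredient is the general RKHS principle that passing to a closed subspace replaces the kernel vectors by their images under the orthogonal projection onto that subspace. Concretely, I would verify directly that the vectors $P\, K\{Z,y,v\}$ are the kernels of $\mc{M}$: for any $h \in \mc{M}$ we have $Ph = h$ and $P = P^*$, so
$$ \ip{P\, K\{Z,y,v\}}{h} = \ip{K\{Z,y,v\}}{Ph} = \ip{K\{Z,y,v\}}{h} = \ipcn{y}{h(Z) v}, $$
which is exactly the defining reproducing property restricted to $\mc{M}$; and $P\,K\{Z,y,v\} \in \mc{M}$ by construction. Hence the $P\,K\{Z,y,v\}$ are precisely the kernels of the NC-RKHS $\mc{M}$.

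It then remains to compute $P\,K\{Z,y,v\}$ explicitly. Applying the adjoint-multiplier identity \eqref{NCadjmult} gives $\Theta(L)^* K\{Z,y,v\} = K\{Z, \Theta(Z)^* y, v\}$, and therefore
$$ \Theta(L)\Theta(L)^* K\{Z,y,v\} = \Theta(L)\, K\{Z, \Theta(Z)^* y, v\}. $$
Substituting this into $P = I - \Theta(L)\Theta(L)^*$ yields
$$ P\,K\{Z,y,v\} = K\{Z,y,v\} - \Theta(L)\, K\{Z, \Theta(Z)^* y, v\} = K^{\Theta}\{Z,y,v\}, $$
which is the asserted formula.

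The only genuinely delicate point is the middle step: one must be sure that the abstract subspace-kernel principle is valid in the NC-RKHS setting, \emph{i.e.} that $\mc{M}$, equipped with the inherited inner product, is again an NC-RKHS whose kernel vectors are the $P\,K\{Z,y,v\}$. I expect this to be routine once one checks that point evaluation remains bounded on $\mc{M}$ (it does, since $\mc{M} \subseteq \hardy$) and that the $K\{Z,y,v\}$ implement point evaluation as in the definition of the CPNC kernel; after that, everything follows from the self-adjointness and idempotence of $P$ used above. No other obstacle is anticipated, since the isometry property of $\Theta(L)$ and the adjoint formula \eqref{NCadjmult} supply the rest of the computation.
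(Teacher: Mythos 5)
Your proposal is correct and takes essentially the same approach as the paper: the paper's proof is the single line that $I - \Theta(L)\Theta(L)^*$ is the orthogonal projector onto $\left(\Theta(L)\hardy\right)^\perp$, and your write-up just fills in the routine details (the projected-kernel principle for subspaces of an NC-RKHS and the adjoint identity \eqref{NCadjmult}) that the paper leaves as "easy to verify."
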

\begin{proof}
Easy to verify since $I - \Theta (L) \Theta (L) ^*$ is the orthogonal projector onto $\left( \Theta (L) \hardy\right) ^\perp$.
\end{proof}

\begin{cor}
If $(Z,y) \in \B ^d _n \times \C ^n$ belongs to the singularity locus of an NC inner $\Theta (L)$, then
\begin{equation} \label{eq:dBr_is_Szego}
K^\Theta \{ Z , y , v \} = K \{ Z , y , v \}.
\end{equation}
Conversely, if $v$ is cyclic for $\mr{Alg} (I, Z)$ and \eqref{eq:dBr_is_Szego} holds, then $(Z,y)$ is in the singularity locus.
\end{cor}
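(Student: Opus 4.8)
The plan is to read off both implications directly from the formula
$$ K^\Theta \{ Z , y , v \} = K \{ Z , y , v \} - \Theta (L) K \{ Z , \Theta (Z) ^* y , v \} $$
of the preceding lemma, combined with the reproducing relation $\ip{K \{ Z , y , v \}}{f}_{\hardy} = \ipcn{y}{f(Z) v}$ used in the proof that $\scr{S}(H)$ is closed. The forward direction is essentially a substitution. By definition of the singularity locus, for scalar $\Theta$ and fixed $Z \in \B ^d _n$ the pair $(Z,y)$ lies in $\mr{Sing}(\Theta)$ exactly when $y^* \Theta (Z) = 0$, equivalently $\Theta (Z) ^* y = 0$. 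Since $K \{ Z , \cdot , v \} = K_Z ( \, \cdot \, v^*)$ is linear in its middle slot, this forces $K \{ Z , \Theta (Z) ^* y , v \} = 0$, and the lemma then collapses to $K^\Theta \{ Z , y , v \} = K \{ Z , y , v \}$.

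For the converse I would run the same computation backwards. Assuming \eqref{eq:dBr_is_Szego}, the lemma yields $\Theta (L) K \{ Z , \Theta (Z) ^* y , v \} = 0$. Because $\Theta (L)$ is NC inner, hence an isometry and in particular injective, the inner argument must vanish: $K \{ Z , \Theta (Z) ^* y , v \} = 0$. Writing $w := \Theta (Z) ^* y$, it remains to upgrade $K \{ Z , w , v \} = 0$ to $w = 0$. Testing against arbitrary $f \in \hardy$ and using the reproducing relation gives $\ipcn{w}{f(Z) v} = 0$ for every $f$; restricting $f$ to polynomials in the NC variables (which lie in $\hardy$, and whose evaluations at $Z$ exhaust $\mr{Alg}(I,Z)$) shows $\ipcn{w}{A v} = 0$ for every $A \in \mr{Alg}(I,Z)$. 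Since $v$ is cyclic for $\mr{Alg}(I,Z)$, the vectors $\{ A v \mid A \in \mr{Alg}(I,Z) \}$ span $\C ^n$, so $w$ is orthogonal to all of $\C ^n$ and hence $w = \Theta (Z) ^* y = 0$. Equivalently $y^* \Theta (Z) = 0$, placing $(Z,y)$ in the singularity locus.

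The computation is short, and the only genuinely load-bearing hypothesis is cyclicity of $v$: dropping $\Theta (L)$ via injectivity of an isometry is immediate, but without cyclicity one learns only that $\Theta (Z) ^* y$ is orthogonal to the cyclic subspace $\overline{\mr{Alg}(I,Z) v}$ rather than zero, and the converse genuinely fails in that case. The mild points worth recording are that polynomials are dense enough in $\hardy$ to detect every element of $\mr{Alg}(I,Z)$ through point evaluation at $Z$, and that in the finite-dimensional setting $n < \infty$ the spanning set $\mr{Alg}(I,Z) v$ already equals $\C ^n$, so cyclicity gives the conclusion with no closure argument needed.
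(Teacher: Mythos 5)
Your proof is correct and takes essentially the same route as the paper's: both reduce \eqref{eq:dBr_is_Szego}, via injectivity of the isometry $\Theta (L)$, to the vanishing of $K \{ Z , \Theta (Z) ^* y , v \}$, then test against elements of $\hardy$ through the reproducing relation and invoke cyclicity of $v$. Your restriction to NC polynomials and your remark that in finite dimensions the set $\mr{Alg} (I,Z) v$ already spans $\C ^n$ are harmless refinements of the paper's use of arbitrary $f \in \hardy$ and density.
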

\begin{proof}
Clearly, since $\Theta(L)$ is injective, we have that \eqref{eq:dBr_is_Szego} holds if and only if $K\{Z, \Theta(Z)^* y, v\} = 0$. The latter holds if and only if for every $f \in \hardy$ we have
\[
0 = \langle K\{Z,\Theta(Z)^* y,v\}, f \rangle = \langle \Theta(Z)^* y, f(Z) v \rangle.
\]
Hence, if $(Z,y)$ is in the singularity locus, then the above equation holds. Conversely, if $v$ is cyclic, then the set of all $f(Z) v$ as $f$ ranges over $\hardy$ is a dense set and thus $(Z,y)$ is in the singularity locus.
\end{proof}

\begin{remark}
The above is not an if and only if statement in general. To see this consider $Z = \left( \begin{smallmatrix} A & B \\ 0 & C \end{smallmatrix} \right) \in \B ^d _n$ and set $v = \left( \begin{smallmatrix} v_0 \\ 0 \end{smallmatrix} \right)$, and $y = \left( \begin{smallmatrix} 0 \\ y_0 \end{smallmatrix} \right)$, for some $v_0,y_0 \neq 0$. Then for every $f \in \hardy$ we have $f(Z) v = \left( \begin{smallmatrix} f(A) v_0 \\ 0 \end{smallmatrix} \right)$ and thus
\[
\ip{K\{Z,y,v\}}{f}_{\hardy} = \ipcn{y}{f(Z)v} = 0.
\]
Also for every $f$, $f(Z)^* y = \left( \begin{smallmatrix} 0 \\ f(B)^* y_0 \end{smallmatrix} \right)$ and thus $K\{Z,f(Z)^*y,v\} = 0$ for every $f$. However, it need not be the case that $f(B)^* y_0 = 0$. This defect can be removed by relaxing our definition of NC variety: Let the \emph{extended NC variety} of $H \in \mult$ be the graded set:
$$  \mr{Sing} ' (H) := \bigsqcup _{n \in \N \cup \{ \infty \}} \mr{Sing} ' _n (H), $$ where
$$ \mr{Sing} ' (H) := \left\{  (Z,y,v) \left| \ Z \in \B ^d _n, \ y,v \in \C ^n; \ H(Z) ^* y \perp \mr{Alg} (I, Z) v \right. \right\}. $$ 
The \emph{extended singularity space} is then,
$$ \scr{S} ' (H) := \{ h \in \hardy | \ h(Z) ^* y \perp \mr{Alg} (I , Z) v \ \forall \ (Z,y,v) \in \mr{Sing} ' (H) \}. $$ It is easily verified that this space is again $R-$invariant, closed, and that 
$$ \scr{S} ' (H) ^\perp = \bigvee _{(Z,y,v) \in \mr{Sing} ' (H)} K \{ Z, y ,v \}. $$ 
Moreover, with this definition, $( Z , y , v ) \in \mr{Sing} ' (H)$ if and only if $K\{ Z , y ,v \} \in \scr{S} ' (H) ^\perp$.  Our original definition is, however, fully justified by the NC Blaschke-Singular-Outer factorization theorem. 
\end{remark}

\begin{lemma} \label{Blakernel}
An NC inner $\Theta $ is Blaschke if and only if 
$$ \ran{\Theta (L)} ^\perp = \bigvee _{\substack{(Z,y) \in \mr{Sing} _n (\Theta ); \\
v \in \C ^n; \ n \in \N \cup \{ \infty \} }} K \{ Z , y , v \}. $$ 
\end{lemma}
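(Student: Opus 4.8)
The plan is to recognize that the statement is, after passing to orthogonal complements, essentially a restatement of the definition of a Blaschke inner combined with the preceding lemma that computes $\scr{S} (\Theta ) ^\perp$.

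First I would recall that, by definition, $\Theta$ is Blaschke precisely when $\ran{\Theta (L)} = \scr{S} (\Theta )$. The key structural observation is that \emph{both} of these subspaces are closed. Indeed, $\scr{S} (\Theta )$ is closed by the previous lemma, and $\ran{\Theta (L)}$ is closed because $\Theta$ being NC inner means $\Theta (L)$ is an isometry, and an isometry has closed range. Since orthogonal complementation is an order-reversing involution on the lattice of closed subspaces, the equality $\ran{\Theta (L)} = \scr{S} (\Theta )$ holds if and only if $\ran{\Theta (L)} ^\perp = \scr{S} (\Theta ) ^\perp$. This is the step where innerness is genuinely used: for a merely bounded (non-isometric) multiplier the range need not be closed, and equality of subspaces would not reduce to equality of their annihilators.

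Next I would invoke the preceding lemma, which identifies $\scr{S} (\Theta ) ^\perp = \bigvee K \{ Z, y , v \}$, the join being taken over all $(Z,y) \in \mr{Sing} _n (\Theta )$, all $v \in \C ^n$, and all $n \in \N \cup \{ \infty \}$ (the range of $v$ is implicit in that lemma but is made explicit in its proof, where $f \in \scr{S} (\Theta )$ is characterized by $\langle K \{ Z, y , v \} , f \rangle = 0$ for all such $(Z,y)$ and all $v \in \C ^n$). Substituting this identity into the equivalence from the previous paragraph yields exactly the claimed biconditional: $\Theta$ is Blaschke if and only if $\ran{\Theta (L)} ^\perp$ equals the asserted join of kernel vectors.

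I expect no serious obstacle in this argument; it is a short deduction from the two preceding lemmas and the definition. The only points requiring any care are the two facts underlying the passage to complements — that $\ran{\Theta (L)}$ is closed (from isometry) and that $\scr{S} (\Theta )$ is closed (from the earlier lemma) — so that taking $\perp$ is a faithful operation. Everything else is a direct substitution.
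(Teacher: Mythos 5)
Your proof is correct and follows essentially the same route as the paper: both arguments reduce the claim to the definition of Blaschke together with the preceding lemma identifying $\scr{S}(\Theta)^\perp$ as the closed span of the kernels $K\{Z,y,v\}$ over the singularity locus. The only cosmetic difference is that you make explicit the two closedness facts (that $\ran{\Theta(L)}$ is closed because $\Theta(L)$ is an isometry, and that $\scr{S}(\Theta)$ is closed by the earlier lemma) which the paper uses implicitly when passing between a subspace equality and the equality of orthogonal complements.
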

\begin{proof}
First any such Szeg\"{o} kernel vector is in $\ran{\Theta (L) } ^\perp$ by the last corollary.
By definition, $\Theta$ is Blaschke if the range of $\Theta (L)$ is exactly the set of all $f \in \hardy$ so that 
$$ y^* f(Z) =0, \quad \forall \ (Z,y) \in \ \mr{Sing} (\Theta ). $$ 
and this condition holds if and only if 
$$ \ip{K\{ Z,y,v \} }{f} =0, $$ for all $(Z,y)$ in this singularity locus. This, in turn, is equivalent to the corresponding set of NC Szeg\"{o} kernels spanning the orthogonal complement of the range of $\Theta (L)$.
\end{proof}

\section{NC Blaschke row-column factorization} \label{sec:main}

By the NC inner-outer factorization theorem, any NC Hardy space function, $H \in H^p (\B ^d _\N )$, $p \in \{ 2 , \infty \}$, in the NC unit ball factors uniquely as $H (L) = \Theta (L) \cdot F (L)$, where $\Theta \in \mult$, $\Theta$ is NC inner and $F \in H^p (\B ^d _\N )$ is NC outer \cite[Theorem 4.2]{Pop-multfact}, \cite[Corollary 2.2]{DP-inv}, \cite[Theorem 2.1]{AriasPopescu}. (For the inner-outer factorization of operator-valued left multipliers between vector-valued NC Hardy spaces, see \cite[Theorem 1.7]{Pop-entropy}.) In this section, we therefore start with an NC inner function $\Theta \in \mult$ and decompose it as the product of an NC Blaschke inner left row multiplier and an NC inner left column multiplier.

\begin{prop} \label{rowcolprop}
Any NC inner $\Theta \in \mult$ factors as 
$$ \Theta := B \cdot S = \bsm B_1, & \cdots, & B_N \esm \bsm S_1 \\ \vdots \\ S_N \esm. $$ where $\ran{ B  (L) } = \scr{S} (\Theta )$, $\mr{Sing} (\Theta ) = \mr{Sing} (B)$, $B$ is an NC Blaschke inner, all components $B_k (L) $ are inner with pairwise orthogonal ranges, and the column $S$ is also inner. 
\end{prop}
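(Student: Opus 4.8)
The plan is to get $B$ from the NC Beurling theorem applied to the singularity space $\scr{S}(\Theta)$, and then to define $S$ by ``dividing out'' $B$. Recall from the preceding lemma that $\scr{S}(\Theta)$ is a closed, $R$-invariant subspace with $\ran{\Theta(L)} \subseteq \scr{S}(\Theta)$; in particular it is nonzero. By the NC Beurling theorem \cite{Pop-charfun,DP-inv,AriasPopescu}, every nonzero $R$-invariant subspace of $\hardy$ is the range of an isometric (NC inner) left multiplier. Writing $\mc{W} := \scr{S}(\Theta) \ominus \overline{\sum_k R_k \scr{S}(\Theta)}$ for the wandering subspace and $N := \dim{\mc{W}}$ (possibly $\infty$), I would produce a row $B(L) = \bsm B_1(L), & \cdots, & B_N(L) \esm : \hardy \otimes \C^N \to \hardy$ with $B(L)^* B(L) = I$ and $\ran{B(L)} = \scr{S}(\Theta)$. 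Read blockwise, the isometry identity is exactly $B_j(L)^* B_k(L) = \delta_{jk} I$, which simultaneously says each component $B_k(L)$ is inner and that their ranges are pairwise orthogonal.

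With $B$ in hand I would set $S(L) := B(L)^* \Theta(L) : \hardy \to \hardy \otimes \C^N$, a column. Because $\ran{\Theta(L)} \subseteq \ran{B(L)}$, the projection $B(L) B(L)^*$ fixes $\ran{\Theta(L)}$, so $B(L) S(L) = B(L) B(L)^* \Theta(L) = \Theta(L)$, i.e.\ $\Theta = B \cdot S$. The same inclusion gives $S(L)^* S(L) = \Theta(L)^* B(L) B(L)^* \Theta(L) = \Theta(L)^* \Theta(L) = I$, so $S(L)$ is an isometry.

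The one genuinely delicate point, which I expect to be the main obstacle, is that $S(L)$ is a bona fide left multiplier (so that the column $S$ is NC inner, not merely isometric). The naive attempt fails because $B(L)^*$ is co-analytic: $B(L)$ intertwines $R_k \otimes I$ with $R_k$, so taking adjoints only yields $B(L)^* R_k^* = (R_k^* \otimes I) B(L)^*$. The fix is to restrict to $\ran{B(L)}$, where $B(L)^*$ acts as the inverse isometry: from $R_k B(L) g = B(L)(R_k \otimes I) g$ one reads off $B(L)^* R_k B(L) = R_k \otimes I$. Since $\Theta(L) f \in \ran{\Theta(L)} \subseteq \ran{B(L)}$ and $\Theta(L)$ commutes with each $R_k$, I can then compute $S(L) R_k f = B(L)^* R_k \Theta(L) f = B(L)^* R_k B(L)(S(L) f) = (R_k \otimes I) S(L) f$, so $S(L)$ intertwines the right shifts and is a (column) left multiplier; being isometric, $S$ is NC inner.

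Finally I would check $\mr{Sing}(\Theta) = \mr{Sing}(B)$, from which $B$ being Blaschke is immediate. The inclusion $\mr{Sing}(B) \subseteq \mr{Sing}(\Theta)$ is clear from $\Theta(Z) = B(Z) S(Z)$: if $y^* B(Z) \equiv 0$ then $y^* \Theta(Z) \equiv 0$. For the reverse, take $(Z,y) \in \mr{Sing}(\Theta)$; then $K\{Z,y,v\} \perp \scr{S}(\Theta) = \ran{B(L)}$ for every $v$, so the operator-valued form of the kernel action \eqref{NCadjmult} gives $K\{Z, B(Z)^* y, v\} = B(L)^* K\{Z,y,v\} = 0$ for all $v \in \C^n$. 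Pairing against $h = p \otimes e_k$ (polynomial $p$, basis vector $e_k$ of $\C^N$) shows $B(Z)^* y$ is orthogonal to every $p(Z) v \otimes e_k$; already $p \equiv 1$ with $v$ ranging over $\C^n$ forces $B(Z)^* y = 0$, i.e.\ $(Z,y) \in \mr{Sing}(B)$. Hence $\mr{Sing}(B) = \mr{Sing}(\Theta)$, so $\scr{S}(B) = \scr{S}(\Theta) = \ran{B(L)}$, which is exactly the defining property of an NC Blaschke inner. Everything except the multiplier property of $S$ is bookkeeping with the NC Beurling theorem and the kernel identities.
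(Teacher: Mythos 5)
Your proposal is correct and takes essentially the same route as the paper: NC Beurling applied to $\scr{S}(\Theta)$, the inclusion $\ran{\Theta (L)} \subseteq \ran{B(L)}$, and the Davidson--Pitts commutant theorem to upgrade $S$ to a column left multiplier --- your explicit formula $S(L) = B(L)^* \Theta (L)$ is precisely the solution the paper extracts from the Douglas Factorization Lemma, since $B(L)$ is an isometry. Your kernel-identity verification that $\mr{Sing}(B) = \mr{Sing}(\Theta)$, and hence that $B$ is Blaschke, correctly supplies details the paper's written proof leaves implicit.
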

\begin{proof}
By \cite[Theorem 2.1, Corollary 2.2]{DP-inv} or \cite[Theorem 1.7]{Pop-entropy}, there is a (row) inner $B  (L) : \hardy\otimes \C ^N \rightarrow \hardy$ (where $N \in \N \cup \{ \infty \})$), so that the $R-$invariant subspace
$$ \scr{S} (\Theta ) = \ran{B  (L)}. $$ If $f=\Theta(L) g \in \ran{\Theta (L)}$, observe that for any $(Z,y) \in \mr{Sing}  (\Theta ) $, that 
$$ y^* f(Z) = y^* \Theta (Z) g(Z) = 0, $$ and it follows that $\ran{\Theta (L) } \subseteq \ran{B  (L)}$. Since both $B(L), \Theta (L)$ are isometries, this implies $\Theta (L) \Theta (L) ^* \leq B(L) B(L) ^*$ so that by the Douglas Factorization Lemma \cite{DFL}, there is a contraction, $S : \hardy \rightarrow \hardy \otimes \C ^N$ so that 
$$ \Theta (L) = B  (L) \cdot S, $$ and $\ran{S} \subseteq \ker{B(L)} ^\perp$. Moreover, 
$$ R_k \Theta (L) = B  (L) (R_k \otimes I_N) S = \Theta (L) R_k = B  (L) S R_k, $$ so that 
$$ B(L) ((R_k\otimes I_n) S - S R_k ) = 0, $$ and since $B(L)$ is an isometry
$$(R_k\otimes I_n) S - S R_k  =0. $$ The weak$-*$ closed unital algebra of the NC right shifts is the commutant of $\mult$ \cite[Theorem 1.2]{DP-inv}, and it follows that $S = S (L) \in \mult \otimes \C ^N$ is a column of left multipliers so that 
$$ \Theta (L) = B  (L) S (L) = \bsm B_1 (L), & \cdots , & B_N (L) \esm \bsm S_1 (L) \\ \vdots \\ S_N (L) \esm. $$ In the above, since $\Theta (L) , B  (L)$ are isometries, it follows that $S (L)$ is also an isometry (or inner), and also each $B_k (L)$ is an isometry, so that the $B_k (L)$ must have pairwise orthogonal ranges.
\end{proof}

Our goal is to show that $N=1$ so that both $B$ and $S $ are scalar NC inner functions, and it will further follow that $S$ is a scalar NC singular inner. 
\begin{thm} \label{InvOut}
    If $f \in \hardy$ is an NC outer, then $f(rL) \in \mult$ is invertible for $0 \leq r <1$.
\end{thm}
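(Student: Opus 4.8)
The plan is to prove invertibility of $f(rL)$ by establishing three facts: that $f(rL)$ is a bounded left multiplier, that it has dense range, and that it is bounded below. Injectivity comes for free, since a nonzero element of $\mult$ is injective on $\hardy$ (the lowest–degree homogeneous part of $f(rL)g$ is the free product of the lowest–degree parts of $f_r$ and of $g$, which is nonzero since the tensor algebra has no zero divisors). Granting the three facts, bounded below gives closed range, density then gives surjectivity, and injectivity gives bijectivity; the bounded inverse produced by the open mapping theorem commutes with every $R_k$ and hence lies in $\mult = \{R_1,\dots,R_d\}'$ by \cite[Theorem 1.2]{DP-inv}, so it is again a left multiplier. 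Boundedness is the soft part: writing $f=\sum_k f_k$ for its homogeneous expansion and using the sharp row–column estimate $\|f_k(Z)\|\le \|f_k\|_2\,\|Z\|_{\mathrm{row}}^{\,k}$, one gets $\|f(rZ)\|\le \sum_k r^k\|f_k\|_2\le \|f\|_2(1-r^2)^{-1/2}$ for every $Z\in\B^d_\N$, so $f(rL)=f_r(L)\in\mult$ with $f_r(Z)=f(rZ)$; this is Popescu's functional calculus applied to the strict row contraction $rL$.

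For density of the range I would argue directly from outerness. Since $f$ is cyclic for the right shifts, $\overline{\{R^\alpha f\}}=\hardy$; as $f\in\scr{S}(f)$ and $\scr{S}(f)$ is closed and $R$–invariant, this forces $\scr{S}(f)=\hardy$, hence $\scr{S}(f)^\perp=\{0\}$. By the kernel description of $\scr{S}(f)^\perp$ recorded above, every Szeg\"o vector $K\{Z,y,v\}$ with $(Z,y)\in\mr{Sing}(f)$ must vanish, and pairing against the constant function $1$ forces $y=0$. Read at the level $\B^d_n$ this says $f(Z)$ has trivial left kernel, hence is invertible, for every finite–dimensional strict row contraction $Z$; read at $Z=rL\in\B^d_\infty$ it says $\ran{f(rL)}^\perp=\{0\}$, i.e. $f(rL)$ has dense range. (The same conclusion follows from the gauge $\Gamma_r e_\alpha=r^{|\alpha|}e_\alpha$, which satisfies $\Gamma_r L_k\Gamma_r^{-1}=rL_k$, whence $f(rL)=\Gamma_r f(L)\Gamma_r^{-1}$ formally and $f_r=f(rL)1=\Gamma_r f$; since $\Gamma_r$ has dense range and $R^\alpha\Gamma_r=r^{-|\alpha|}\Gamma_r R^\alpha$, cyclicity passes from $f$ to $f_r$.)

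The crux is to show $f(rL)$ is bounded below, and this is the only place where $r<1$ is essential: at $r=1$ an outer function need not be invertible. I would reduce it to a uniform estimate, $\sup\{\,\|f(Z)^{-1}\| : Z\in\B^d_n,\ \|Z\|_{\mathrm{row}}\le\rho,\ n\in\N\,\}<\infty$ for each $\rho<1$, the invertibility of each $f(Z)$ being the finite–level statement just proved. Granting this, Cauchy estimates bound the homogeneous parts $g_m$ of the formal reciprocal $1/f$ by $\|g_m\|_2\le C_\rho\,\rho^{-m}$, so $\limsup_m\|g_m\|_2^{1/m}\le 1$; since $\|g_m(rL)\|=r^m\|g_m\|_2$, the series $g(rL)=\sum_m g_m(rL)$ converges in operator norm for every $r<1$, and, the functional calculus at the interior contraction $rL$ being multiplicative, yields a two–sided bounded inverse $g(rL)=f(rL)^{-1}$. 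I expect the uniform–across–dimensions boundedness of $Z\mapsto f(Z)^{-1}$ on closed sub-balls to be the genuine obstacle: levelwise holomorphy gives a bound on each $\B^d_n$ separately but nothing independent of $n$, and the single–variable argument — continuity and non–vanishing on the compact disk $\overline{\rho\D}$ — has no direct analogue, since the operator ball $\{\|Z\|_{\mathrm{row}}\le\rho\}$ is not norm–compact. Resolving it should require real NC function theory: either an NC maximum–modulus / normal–families argument forcing $f(Z)^{-1}$ to be uniformly analytic, or the fact that $1/f$ lies in the NC Smirnov class of \cite{JM-freeSmirnov} together with the statement that Smirnov–class functions evaluate to bounded operators at the interior point $rL$.
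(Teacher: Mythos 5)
Your proposal is correct and essentially matches the paper on the soft parts: the norm bound $\|f(rL)\|\le \|f\|_{H^2}(1-r^2)^{-1/2}$ is exactly the paper's lemma, and your deduction that outerness forces $\mr{Sing}(f)=\emptyset$ (pair the kernel $K\{Z,y,y\}$ against the constant $1$ to get $\|y\|^2=0$) is the same observation the paper makes parenthetically, citing \cite[Lemma 3.2]{JM-freeSmirnov}; applied at the infinite-level point $rL\in\B^d_\infty$ it does give dense range of $f(rL)$, as you say. But the argument is incomplete at precisely the step you flag: you never prove that $f(rL)$ is bounded below, equivalently that $\sup\{\|f(Z)^{-1}\| : \|Z\|\le \rho,\ n\in\N\cup\{\infty\}\}<\infty$ for $\rho<1$. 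Your two suggested repairs do not obviously close it. Levelwise holomorphy plus nonvanishing gives no bound uniform in $n$, as you note; and the Smirnov-class route is circular: the relevant statement from \cite{JM-freeSmirnov} --- that an outer $a$ gives a closed, densely defined $a(L)^{-1}$ whose composition with evaluation at an interior point is bounded --- is essentially the theorem being proved, and \cite[Lemma 3.2]{JM-freeSmirnov} only yields pointwise invertibility, which you already have.

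The paper closes exactly this gap with the normal-families-type argument you conjecture should exist, and it is worth recording how, since it is short. If $f(rL)$ is not invertible then (by pointwise invertibility and inverse-closedness of $\mult=\{R_1,\dots,R_d\}'$, \cite[Theorem 1.2]{DP-inv} --- this is the converse of your reduction, so your reduction is exactly right) the inverses $f(W)^{-1}$ are not uniformly bounded on $r\B^d_\N$: there are $W^{(n)}$ with $\|W^{(n)}\|\le r$ and unit vectors $y_n$ with $\|f(W^{(n)})^*y_n\|<1/n$. Because $f$ is a free NC function one may conjugate by unitaries to normalize $y_n=e_1$ for all $n$. The crucial point is that the NC Szeg\"{o} kernels $K\{Z^{(n)},e_1,e_1\}$ are uniformly bounded in $\hardy$ by $(1-r^2)^{-1/2}$, since the points stay in the sub-ball of radius $r<1$; a weakly convergent subsequence has limit $h$ with vacuum coefficient $\ip{1}{h}=\lim\,(e_1,e_1)=1$, so $h\neq 0$, while for every free polynomial $p$,
\[
|\ip{h}{p(R)f}| = \lim \left| \left( e_1 , f(Z^{(k)})\, p(Z^{(k)})\, e_1 \right) \right| \le \lim \|f(Z^{(k)})^*e_1\|\,\|p\|_{\mult} = 0,
\]
contradicting $R$-cyclicity of $f$. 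So the missing ingredient is weak compactness of the uniformly bounded kernel family over the closed sub-ball, which converts \emph{approximate} left null vectors (not just the exact ones your singularity-locus argument handles) into a nonzero vector orthogonal to the cyclic subspace. With that lemma in hand, your architecture (injective $+$ dense range $+$ bounded below, then inverse-closedness) goes through, and your Cauchy-estimate reduction is then unnecessary.
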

We will have several occasions to use the following concept of \emph{argument re-scaling map}:
\begin{defn}
Given any $r \in [0,1]$, let $\Phi _r : \hardy \otimes \cH \rightarrow \hardy \otimes \cH$ be defined by: 
\ba \Phi _r f & = & \Phi _r \sum _{\alpha \in \F ^d } L^\alpha 1 \otimes \hat{f} _\alpha \nn \\
& := & \sum _{\alpha } L^\alpha 1 \otimes r^{|\alpha | } \hat{f} _\alpha =: f_r. \nn \ea
Similarly define $\varphi _r : \mult \otimes \mc{L} (\cH , \J ) \rightarrow \mult \otimes \mc{L} (\cH , \J )$ by $\varphi _r F(L) = F(rL)$.
\end{defn}
We sometimes write $f_r = f(rL) 1$. If $F \in \mult$, then $\Phi _r F(L) 1 = \varphi _r ( F(L) ) 1$.
\begin{lemma}
For any $0<r \leq 1$, $\Phi _r$ is a contractive, self-adjoint quasi-affinity. The map $\varphi _r$ is a completely contractive homomorphism for any $r \in [0,1]$. If $\varphi _r : \mult \otimes \mc{L} (\cH ) \rightarrow \mult \otimes \mc{L} (\cH )$, then it is also unital, and extends to a completely positive and unital map on the corresponding operator system. The map $\Phi _r$ respects the module intertwining action of $\mult \otimes \mc{L} (\cH , \J )$: If $F(L) \in \mult \otimes \mc{L} (\cH , \J)$ and $f \in \hardy \otimes \cH$, then $\Phi _r F(L) f = F(rL) f_r$. 
\end{lemma}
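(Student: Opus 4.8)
The plan is to reduce every assertion to a coefficient-wise computation on the orthonormal basis $\{L^\alpha 1\}_{\alpha \in \F^d}$ of Fock space, and to extract self-adjointness/positivity and complete positivity from the diagonal structure of $\Phi_r$ together with a gauge symmetry of the Fock space. First consider $\Phi_r$. With respect to the orthonormal basis $\{L^\alpha 1 \otimes e_j\}$ of $\hardy\otimes\cH$ (here $\{e_j\}$ is an orthonormal basis of $\cH$), its definition exhibits $\Phi_r$ as the diagonal operator with eigenvalue $r^{|\alpha|}$ on $L^\alpha 1\otimes e_j$. Since $0<r\le 1$ forces $0<r^{|\alpha|}\le 1$, this operator is positive, hence self-adjoint, and contractive, with $\|\Phi_r f\|^2=\sum_\alpha r^{2|\alpha|}\|\hat f_\alpha\|^2\le\|f\|^2$. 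Because every eigenvalue is strictly positive, $\Phi_r$ is injective; its range contains each basis vector $L^\alpha 1\otimes e_j=\Phi_r(r^{-|\alpha|}L^\alpha 1\otimes e_j)$, hence the dense subspace of finitely supported vectors, so $\Phi_r$ has dense range and is a quasi-affinity.

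Next consider $\varphi_r F(L)=F(rL)$. Multiplicativity is the coefficient identity: writing $F(L)=\sum_\alpha \hat F_\alpha L^\alpha$ and $G(L)=\sum_\beta \hat G_\beta L^\beta$ on the dense polynomial subalgebra, one has $F(L)G(L)=\sum_{\alpha,\beta}\hat F_\alpha\hat G_\beta L^{\alpha\beta}$, and the multiplicative behaviour $r^{|\alpha\beta|}=r^{|\alpha|}r^{|\beta|}$ gives $(FG)(rL)=F(rL)G(rL)$. For complete contractivity and the completely positive extension I would use the gauge unitaries $U_\theta\in\mc{L}(\hardy)$ defined by $U_\theta L^\alpha 1:=e^{i\theta|\alpha|}L^\alpha 1$, which satisfy $U_\theta L_k U_\theta^*=e^{i\theta}L_k$ and hence $(U_\theta\otimes I)F(L)(U_\theta\otimes I)^*=F(e^{i\theta}L)$. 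For $0\le r<1$, using the Poisson kernel $P_r$ (which is nonnegative, has $\tfrac{1}{2\pi}\int_0^{2\pi}P_r(\theta)\,d\theta=1$ and $\tfrac{1}{2\pi}\int_0^{2\pi}P_r(\theta)e^{in\theta}\,d\theta=r^{|n|}$), I would define
$$\Psi_r(X):=\frac{1}{2\pi}\int_0^{2\pi}P_r(\theta)\,(U_\theta\otimes I)\,X\,(U_\theta\otimes I)^*\,d\theta$$
on $\mc{L}(\hardy\otimes\cH)$ and verify that $\Psi_r$ restricts to $\varphi_r$ on $\mult\otimes\mc{L}(\cH)$ by matching Fourier coefficients. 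Since $P_r$ is a probability density, $\Psi_r$ is an average of the unital $*$-automorphisms $\mr{Ad}(U_\theta\otimes I)$, hence unital and completely positive with $\Psi_r(I)=I$; this yields at once complete contractivity, unitality, and the UCP extension to the operator system in the square case $\cH=\J$. The general nonsquare complete contractivity is then the amplification $\varphi_r\otimes\mr{id}_{\mc{L}(\cH,\J)}$ of the scalar statement, and for $r=1$ one simply notes $\varphi_1=\mr{id}$. (Alternatively, complete contractivity follows from Popescu's free holomorphic functional calculus, since $rL$ is a strict row contraction for $r<1$.)

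Finally, the module intertwining identity $\Phi_r F(L)f=F(rL)f_r$ is once more a coefficient computation. For $F$ a multiplier polynomial and $f=\sum_\beta L^\beta 1\otimes\hat f_\beta$ finitely supported, one has $F(L)f=\sum_{\alpha,\beta}L^{\alpha\beta}1\otimes\hat F_\alpha\hat f_\beta$, so $\Phi_r F(L)f=\sum_{\alpha,\beta}r^{|\alpha|+|\beta|}L^{\alpha\beta}1\otimes\hat F_\alpha\hat f_\beta$, while $F(rL)f_r=\sum_{\alpha,\beta}r^{|\alpha|}r^{|\beta|}L^{\alpha\beta}1\otimes\hat F_\alpha\hat f_\beta$; these agree since $|\alpha\beta|=|\alpha|+|\beta|$. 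Having shown $\Phi_r$ contractive and $F(rL)$ bounded (by the complete contractivity of $\varphi_r$), the identity extends from this dense set to all $f\in\hardy\otimes\cH$ by continuity.

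I expect the main obstacle to be the analytic justification of the gauge–Poisson representation: checking that the operator-valued integral defining $\Psi_r$ converges in the weak operator topology, that $\theta\mapsto(U_\theta\otimes I)X(U_\theta\otimes I)^*$ is appropriately continuous, and—most importantly—that $\Psi_r$ genuinely restricts to $\varphi_r$, i.e.\ that matching the Poisson Fourier coefficients against the monomial weights $r^{|\alpha|}$ survives the passage from the polynomial subalgebra to its weak-$*$ closure $\mult$. Everything else reduces to the diagonal description of $\Phi_r$ and the bookkeeping identity $r^{|\alpha\beta|}=r^{|\alpha|}r^{|\beta|}$.
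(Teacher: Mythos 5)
The paper states this lemma without proof, treating it as routine, so there is no in-text argument to compare against; your verification is correct in substance and supplies the standard one. The diagonal description of $\Phi_r$ on the orthonormal basis $\{ L^\alpha 1 \otimes e_j \}$ settles contractivity, self-adjointness and the quasi-affinity claim exactly as you say. Your gauge--Poisson average $\Psi_r$ is a sound device (it is in essence Popescu's noncommutative Poisson transform): $\theta \mapsto U_\theta$ is SOT-continuous, so the WOT-integral exists, and $\Psi_r$ is unital and completely positive as an average of the automorphisms $\mr{Ad} (U_\theta \otimes I)$ against the probability measure $P_r (\theta ) \, d\theta / 2\pi$. Your worry in the last paragraph about the restriction surviving the weak-$*$ closure is unfounded: for \emph{any} $F(L) \in \mult \otimes \mc{L} (\cH )$, matching matrix entries of $\Psi _r (F(L))$ against the basis (where $\ip{L^{\alpha\gamma} 1 \otimes e_j}{F(L)\, L^\gamma 1 \otimes e_k}$ reads off the Fourier coefficient $\hat{F} _\alpha$, and the integral commutes with these rank-one functionals) shows directly that $\Psi _r (F(L)) = F(rL)$, with the bonus bound $\| F(rL) \| \leq \| F(L) \|$, sharper than the $(1-r^2)^{-1/2}$ estimate in the paper's subsequent lemma. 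The corner embedding for the nonsquare case, the trivial cases $r=1$ and $r=0$, and the coefficient computation for the intertwining identity are all fine.

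One loose end should be reordered to be airtight: you verify multiplicativity of $\varphi _r$ only on the polynomial subalgebra, whereas the lemma asserts it on all of $\mult \otimes \mc{L} (\cH )$, which is only the weak-$*$ closure of the polynomials; multiplication is merely separately weak-$*$ continuous, so multiplicativity does not formally pass to the closure without an extra step. Your own final computation closes this: once the intertwining identity $\Phi _r F(L) f = F(rL) \Phi _r f$ is established for \emph{all} $F(L)$ in the full algebra (by coefficients on finitely supported $f$, then by continuity, using the boundedness of $F(rL)$ furnished by $\Psi _r$), one gets for arbitrary $F, G$ that $(FG)(rL) \Phi _r f = \Phi _r F(L) G(L) f = F(rL) \Phi _r G(L) f = F(rL) G(rL) \Phi _r f$, and density of $\ran{\Phi _r}$ yields $(FG)(rL) = F(rL) G(rL)$. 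Alternatively, one may match Fourier coefficients directly: $\widehat{(FG)} _\gamma = \sum _{\alpha \beta = \gamma} \hat{F} _\alpha \hat{G} _\beta$ is a finite sum valid for arbitrary elements of the algebra, and $|\alpha \beta | = |\alpha | + |\beta |$ gives the claim since Fourier coefficients determine elements of $\mult \otimes \mc{L} (\cH )$. With the order of argument made explicit --- boundedness via $\Psi _r$, then intertwining, then the homomorphism property as a corollary --- your proof is complete.
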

\begin{lemma}
If $r \in [0,1)$, and $f \in \hardy$, then $f(rL) := M^L _{\Phi _r f} \in \mult$.
\end{lemma}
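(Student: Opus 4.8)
The plan is to recognize the operator $f(rL)$ as the result of substituting the strict row contraction $rL$ into the power series of $f$, to show that this substitution converges in operator norm to a bounded operator, and finally to identify that operator with $M^L_{\Phi _r f}$ and place it inside the weak-$*$ closed algebra $\mult$.

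First I would check that $rL$ is a strict row contraction. Since the left free shift $L = (L_1, \dots , L_d)$ is a row isometry, $\sum _k (rL_k)(rL_k)^* = r^2 \sum _k L_k L_k^* = r^2 (I - P)$, where $P$ is the orthogonal projection onto the vacuum $\C \cdot 1 \subset \hardy$. Hence $rL(rL)^* \le r^2 I < I$ for every $r \in [0,1)$, so $rL$ is a strict row contraction with row norm $\|rL\| = r < 1$, acting on the infinite-dimensional space $\hardy$; thus $rL \in \B ^d _\infty$.

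Next, because $f \in \hardy = F^2_d$, its power series $f(Z) = \sum _{\alpha \in \F ^d} \hat{f}_\alpha Z^\alpha$ converges absolutely in operator norm at every strict row contraction, uniformly on closed balls of row-radius $\rho < 1$ \cite{Pop-freeholo,SSS}. If one prefers a self-contained estimate, I would group the series by homogeneous degree, set $f_n(Z) = \sum _{|\alpha| = n} \hat{f}_\alpha Z^\alpha$, and bound $\|f_n(Z)\| = \|f_n(Z)^*\|$ by Cauchy--Schwarz together with the row-contraction inequality $\sum _{|\alpha| = n} Z^\alpha (Z^\alpha)^* \le \rho ^{2n} I$, obtaining $\|f_n(Z)\| \le \rho ^n \big( \sum _{|\alpha|=n} |\hat{f}_\alpha|^2 \big) ^{1/2}$; summing in $n$ and applying Cauchy--Schwarz once more yields $\|f(Z)\| \le (1-\rho ^2)^{-1/2} \|f\|_{\hardy}$. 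Evaluating at $Z = rL$ then shows that
\[ f(rL) = \sum _{\alpha \in \F ^d} r^{|\alpha|} \hat{f}_\alpha L^\alpha \]
converges in operator norm to a bounded operator on $\hardy$, with $\|f(rL)\| \le (1-r^2)^{-1/2} \|f\|_{\hardy}$.

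Finally I would identify this operator with $M^L _{\Phi _r f}$. Applying $f(rL)$ to the vacuum gives $f(rL)1 = \sum _\alpha r^{|\alpha|} \hat{f}_\alpha L^\alpha 1 = \Phi _r f = f_r$, so $\Phi _r f$ is the symbol of $f(rL)$ and $M^L_{\Phi _r f} = f(rL)$. The partial sums $\sum _{|\alpha| \le N} r^{|\alpha|} \hat{f}_\alpha L^\alpha$ are polynomials in $L$, hence lie in $\mr{Alg}(I,L) \subseteq \mult$; since they converge to $f(rL)$ in operator norm and $\mult = \mr{Alg}(I,L)^{-weak-*}$ is weak-$*$ closed (norm convergence implies weak-$*$ convergence), we conclude $f(rL) \in \mult$. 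In particular $\Phi _r f$ is a bounded left multiplier, proving the claim. The only genuine subtlety is the operator-norm convergence of the power series at the boundary-type contraction $rL$: the crude bound $\|Z^\alpha\| \le \|Z\|^{|\alpha|}$ together with $\sum _\alpha |\hat{f}_\alpha| = \infty$ for a generic $\hardy$ function is useless, so one must exploit the row-contraction structure by collecting terms of equal homogeneous degree and using $\sum _{|\alpha|=n} Z^\alpha (Z^\alpha)^* \le \rho ^{2n} I$ rather than estimating word-by-word. Once that estimate is quoted from Popescu (or verified directly as above), the remaining identifications are routine.
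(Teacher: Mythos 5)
Your proof is correct and follows essentially the same route as the paper: decompose $f$ into its homogeneous components $f_n$, bound the degree-$n$ piece at a radius-$r$ point by $r^n \| f_n \| _{H^2}$ (the paper uses the equality $\| f_n (L) \| = \| f_n \| _{H^2}$ at $Z = rL$, you use the slightly more general row-contraction estimate $\sum _{|\alpha | = n} Z^\alpha (Z^\alpha )^* \leq \rho ^{2n} I$), and then apply Cauchy--Schwarz in $n$ to obtain the identical bound $(1-r^2)^{-1/2} \| f \| _{H^2}$. Your additional verifications that $f(rL) 1 = \Phi _r f$ and that norm limits of polynomials in $L$ lie in the weak-$*$ closed algebra $\mult$ are routine steps the paper leaves implicit.
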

\begin{proof}
Write $f = \sum _{n=0} ^\infty f_n $, where each $f_n \in \fp$ is a homogeneous NC polynomial of degree $n$. (This is the Taylor-Taylor series expansion of $f$ at $0 \in \B ^d _1$.) Then $f_r = \sum r^n f_n$, and the operator norm of $f_r$ is 
\ba \| f _r (L) \|  & \leq & \sum _{n=0} ^\infty r^{n} \underbrace{\| f_n (L) \| _{\mc{L} (\hardy )}}_{=\| f_n (L) 1 \| _{\hardy}} \nn \\ & = & \sum _{n=0} ^\infty r ^n \| f_n \| _{H^2}, \nn \ea (the operator norm of any homogeneous free polynomial in $L$ coincides with its Fock space norm), 
\ba
& \leq & \sqrt{\frac{1}{1-r^2}} \cdot \left( \sum \| f_n \| _{H^2 } ^2 \right) ^{1/2} \nn \\
& = & \| f \|  _{H^2} \sqrt{\frac{1}{1-r^2}}. \nn \ea 
\end{proof}
\begin{proof}{ (of Theorem \ref{InvOut})}
Any NC outer $F \in \mult$ is necessarily pointwise invertible in the NC unit ball, $\B ^d _\N$ \cite[Lemma 3.2]{JM-freeSmirnov}, and this extends to any NC outer $f \in \hardy$. (Otherwise there is a $Z \in \B ^d _n$ and $y \in \C ^n$ so that $f(Z) ^* y =0$ and therefore $K\{Z , y , v \}$ is orthogonal to the $R-$cyclic subspace generated by $f$, for any $v \in \C ^n$.) By the previous lemma, $f(rL) \in \mult$ is uniformly bounded. If $f(rL)$ is not invertible, then it follows that $f(rZ) ^{-1}$ is not uniformly bounded in $ \B ^d _\N$, or, equivalently, $f(Z) ^{-1}$ is not uniformly bounded in $ r \B ^d _\N$. Since $\| f(Z) ^{-1} \| = \| ( f(Z) ^* ) ^{-1} \| $, $(f(Z) ^*) ^{-1} $ is not uniformly norm-bounded in $r \B ^d _\N$, and it follows that we can find a sequence $(W ^{(n)} ) \subset r \B ^d _\N$, $W^{(n)} \in r \B ^d _{m_n}$, and $y_n \in \C ^{m_n} $, $\| y _n \| =1$, so that
$$ \| f (W ^{(n)} ) ^*  y _n \| < \frac{1}{n}. $$ We view each level $\C ^n$ as a subspace of $\C ^\infty = \ell ^2 (\N )$ (the span of the first $n$ standard basis vectors) so that each $y_n \in \C ^\infty$. Let $\{ e_k \}$ be the standard orthonormal basis for $\C ^\infty$, and choose a unitary $U_n$ so that $U_n y_n = e_1$. Then, since $f(Z)$ is a free NC function,
\ba \| f (U_n W^{(n)} U_n ^* ) ^* e_1 \| & = & \| U_n f (W^{(n)} ) ^* U_n ^* e_1 \| \nn \\
& = & \| f (W^{(n)} ) ^* y_n \| \rightarrow 0. \nn \ea It follows that we can assume, without loss in generality, that $y_n = e_1$ for every $n \in \N$. That is, we can replace the uniformly bounded sequence of strict row contractions $W^{(n)}$, with the sequence $Z^{(n)} := U_n W^{(n)} U_n ^*$, and we set $y = e_1 = v$. Since $\| Z^{(n)} \| \leq r$ for every $n \in \N$, it follows that the sequence of NC Szeg\"{o} kernels $\left( K \{ Z^{(n)} , e_1 , e_1 \} \right)$ is uniformly bounded in Fock space norm:
\ba \| K \{ Z ^{(n)} , e_1 , e_1 \} \| _{\hardy} ^2 & = &  \left( e_1 , K ( Z^{(n)} , Z^{(n)} ) [E_{11} ] e_1 \right) _{\C ^\infty}  \nn \\
& \leq & \left( e_1 , K(Z^{(n)} , Z^{(n)} ) [ I ] e_1 \right) \nn \\
& \leq & \| K (Z^{(n)} , Z^{(n)} ) [ I ] \| \nn \ea
\ba
& = & \left\| \sum _{k=0} ^\infty \mr{Ad} _{Z^{(n)}, (Z^{(n)}) ^*} ^{(k)} (I) \right\| \nn \\
& \leq & \frac{1}{1-r^2}. \nn \ea In the above $\mr{Ad} _{Z, Z^*}$ denotes the completely positive map of adjunction by $Z$ and $Z^*$,
$$ \mr{Ad} _{Z, Z^*} (P ) := Z_1 P Z_1 ^* + \cdots + Z_d P Z_d ^*, $$ and we used the fact that $Z Z^* \leq r^2 I$. Since this sequence of NC kernels is uniformly bounded, it follows that there is a weakly convergent subsequence, $\left( K \{ Z^{(k)} , e_1 , e_1 \} \right)$ (where say $k=n_k$) so that 
$$ K\{ Z^{(k)} , e_1 , e_1 \} \stackrel{w}{\rightarrow} h \in \hardy. $$ The vacuum coefficient of $h$ is:
\ba h_\emptyset & = & \ip{1}{h} _{\hardy} \nn \\
& = & \lim \ip{1}{K \{ Z^{(k)} , e_1 , e_1 \} }_{\hardy} \nn \\
& = & \left( e_1 , e_1 \right) _{\C ^\infty} \nn \\
& = & 1, \nn \ea and hence $h \neq 0$.  However, for any NC polynomial $p \in \fp$, consider:
\ba | \ip{h}{p(R)f}_{\hardy} |  & = &  \lim \left| \ip{K\{ Z^{(k)} , e_1 ,  e_1 \}}{ p(R) f} \right| \nn \\
&  =  & \lim  \left| \left( e_1 , f (Z ^{(k)} ) p (Z^{(k)}) e_1 \right) _{\C ^\infty } \right|  \nn \\
& \leq  & \lim \| f (Z^{(k)}) ^* e_1 \| \| p \| _{\mult}  \\
& = & 0, \nn \ea by assumption. Since $p \in \fp$ was arbitrary and $h \neq 0$, we conclude $f$ is not $R-$cyclic, contradicting the assumption that $f \in \hardy$ is NC outer.
\end{proof}

\begin{cor} \label{innersing}
Given any $H \in H^p (\B ^d _\N)$, $p \in \{ 2 , \infty \}$, if  $H= \Theta \cdot F$ is the inner-outer factorization of $H$, then $\mr{Sing} (H) = \mr{Sing} (\Theta)$.
\end{cor}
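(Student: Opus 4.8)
The plan is to reduce the statement to the pointwise invertibility of the outer factor $F$, and then to read off both inclusions directly. Since the inner-outer factorization $H = \Theta \cdot F$ is an identity of free NC functions, we have $H(Z) = \Theta(Z) F(Z)$ at every $Z \in \B^d_{\aleph_0}$ (for $p = 2$ the factor $F$ lies in $\hardy$ and is evaluated as a locally bounded NC function). The inclusion $\mr{Sing}(\Theta) \subseteq \mr{Sing}(H)$ is then immediate: if $(Z,y) \in \mr{Sing}(\Theta)$, so that $y^* \Theta(Z) \equiv 0$, then $y^* H(Z) = y^* \Theta(Z) F(Z) \equiv 0$, giving $(Z,y) \in \mr{Sing}(H)$.

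For the reverse inclusion I would first establish that $F(Z)$ is invertible at every point of $\B^d_{\aleph_0}$. As $F$ is NC outer, Theorem \ref{InvOut} guarantees that $F(rL) \in \mult$ is invertible for every $0 \le r < 1$. To promote this to genuine pointwise invertibility, including at the infinite level (which is needed since $\mr{Sing}$ ranges over $n \in \N \cup \{\infty\}$), I would run the scaling argument already used in the singular-inner remark above: given a strict row contraction $Z$ with $\|Z\| < 1$, choose $r$ with $\|Z\| < r < 1$ and set $W := Z/r \in \B^d_{\aleph_0}$. The inverse $G(L) := F(rL)^{-1} \in \mult$ then satisfies $F(rV) G(V) = G(V) F(rV) = I$ for all $V \in \B^d_{\aleph_0}$, and evaluating at $V = W$ shows that $F(Z) = F(rW)$ is invertible.

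With pointwise invertibility of $F$ in hand, the reverse inclusion becomes a one-line computation: if $(Z,y) \in \mr{Sing}(H)$, then $y^* \Theta(Z) F(Z) = y^* H(Z) \equiv 0$, and multiplying on the right by $F(Z)^{-1}$ yields $y^* \Theta(Z) \equiv 0$, i.e. $(Z,y) \in \mr{Sing}(\Theta)$. Combining the two inclusions gives $\mr{Sing}(H) = \mr{Sing}(\Theta)$. The only step that is not purely formal is the passage from operator invertibility of $F(rL)$ to pointwise invertibility of $F(Z)$ on all of $\B^d_{\aleph_0}$; once that is secured, everything else is a direct consequence of the factorization $H(Z) = \Theta(Z) F(Z)$.
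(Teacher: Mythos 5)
Your proof is correct and takes essentially the same route as the paper's: both reduce the corollary to the pointwise invertibility of the outer factor $F$ on all of $\B^d_{\aleph_0}$ via Theorem \ref{InvOut}, and then read off both inclusions from the identity $y^* H(Z) = y^* \Theta(Z) F(Z)$. Your rescaling step promoting invertibility of $F(rL)$ in $\mult$ to invertibility of $F(Z)$ at each point merely makes explicit what the paper leaves implicit (it is the same argument the paper uses in its remark characterizing singular inners), so nothing further is needed.
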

\begin{proof}
We have that $(Z,y) \in \mr{Sing} (H)$ if and only if 
$$ y^* H(Z) = y^* \Theta (Z) F(Z) =0. $$ Since $F$ is outer, it is pointwise invertible in $\B ^d _{\aleph _0}$ by the previous theorem, so that the above happens if and only if $(Z,y) \in \mr{Sing} (\Theta )$. 
\end{proof}

\begin{cor} \label{NCsingular}
For every $0<r<1$ there is an $H _r  (L ) \in \mult \otimes \C ^{1\times N}$  so that 
$$ B(rL) = \Theta (rL) H_r  (L), \quad \mbox{and} \quad  H_r (L) S(rL) = I _{\hardy}. $$ In particular, the NC column-inner, $S(L)$, is pointwise left invertible in the NC unit ball, $\B ^d _{\aleph _0}$.%, \emph{i.e.} $S$ is an NC singular inner.
\end{cor}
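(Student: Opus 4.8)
The plan is to isolate the genuinely analytic identity $\Theta(rL)H_r(L) = B(rL)$ and to derive the left-inverse identity and the pointwise claim from it. First I would apply the rescaling homomorphism $\varphi_r$ to the factorization $\Theta(L) = B(L)S(L)$ of Proposition \ref{rowcolprop}; since $\varphi_r$ is a completely contractive homomorphism, this gives $\Theta(rL) = B(rL)S(rL)$ with $\Theta(rL)\in\mult$, $B(rL)\in\mult\otimes\C^{1\times N}$ and $S(rL)\in\mult\otimes\C^{N\times 1}$ all bounded. Suppose for the moment we have produced a row multiplier $H_r(L)\in\mult\otimes\C^{1\times N}$ with $\Theta(rL)H_r(L) = B(rL)$. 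Right-multiplying by $S(rL)$ and using $B(rL)S(rL) = \Theta(rL)$ gives $\Theta(rL)\,H_r(L)S(rL) = \Theta(rL)$; since $\Theta(rL)$ is multiplication by the nonzero NC function $\Theta(rZ)$ it is injective on $\hardy$, so $H_r(L)S(rL) = I$, which is the second identity. Finally the ``in particular'' clause is immediate: multiplier identities hold pointwise, so $H_r(Z)S(rZ) = I$ for every $Z\in\B^d$, and any strict row contraction $W\in\B^d_{\aleph_0}$ may be written $W = rZ$ with $Z\in\B^d$ upon choosing $\|W\| < r < 1$, whence $H_r(Z)S(W) = I$ exhibits a left inverse of $S(W)$.

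So everything reduces to the divisibility $\Theta(rL)H_r(L) = B(rL)$. I would obtain $H_r$ from the Douglas Factorization Lemma \cite{DFL}, exactly as in Proposition \ref{rowcolprop}: a bounded $H_r(L)\colon \hardy\otimes\C^N\to\hardy$ solving this equation exists as soon as $B(rL)B(rL)^* \le \lambda^2\,\Theta(rL)\Theta(rL)^*$ for some finite $\lambda$; the intertwining relations $R_k\Theta(L) = \Theta(L)R_k$ together with the fact that the WOT-closed algebra generated by the right shifts is the commutant of $\mult$ then force the resulting solution to commute with the right shifts, hence to be a genuine row multiplier $H_r(L)\in\mult\otimes\C^{1\times N}$. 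The qualitative half of the required inequality is built into the hypothesis: by the kernel action \eqref{NCadjmult} one has $\Theta(rL)^*K\{Z,y,v\} = K\{Z,\Theta(rZ)^*y,v\}$ and, componentwise, $B(rL)^*K\{Z,y,v\} = K\{Z,B(rZ)^*y,v\}$, so that $\Theta(rZ)^*y = 0$ is the statement $(rZ,y)\in\mr{Sing}(\Theta)$ while $B(rZ)^*y = 0$ is the statement $(rZ,y)\in\mr{Sing}(B)$. Since $\mr{Sing}(\Theta) = \mr{Sing}(B)$, the two null conditions coincide; in particular $\ker{\Theta(rL)^*}\subseteq\ker{B(rL)^*}$ on the dense span of NC Szeg\"o kernels, which is precisely the qualitative content of the range inclusion $\ran{B(rL)}\subseteq\ran{\Theta(rL)}$.

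The step I expect to be the main obstacle is the quantitative bound, i.e. the existence of a single constant $\lambda = \lambda_r$ making the operator inequality hold rather than merely the inclusion of null spaces. Here I would argue by contradiction, mimicking the normal-families argument in the proof of Theorem \ref{InvOut}. If no such $\lambda$ existed, there would be a sequence of strict row contractions $Z^{(n)}$ with $\|Z^{(n)}\| \le r$, together with unit vectors $y_n$, for which $\|\Theta(rZ^{(n)})^* y_n\|\to 0$ while $\|B(rZ^{(n)})^* y_n\|$ stays bounded below. Because $\|rZ^{(n)}\| \le r < 1$, the associated NC Szeg\"o kernels $K\{Z^{(n)}, y_n, v\}$ are uniformly bounded in Fock norm (by $\tfrac{1}{1-r^2}$, exactly as in the proof of Theorem \ref{InvOut}), so after normalizing by unitaries and passing to a weakly convergent subsequence one extracts a nonzero limit vector $h$ that is annihilated by $\Theta(L)^*$ but not by $B(L)^*$. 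Translated back through \eqref{NCadjmult}, this produces a point of $\mr{Sing}(\Theta)$ that is not a point of $\mr{Sing}(B)$, contradicting Proposition \ref{rowcolprop}. Making this limiting argument rigorous --- controlling the weak limits of the rescaled kernel vectors and the simultaneous behaviour of the two null conditions --- is the delicate core; once it yields the uniform bound, Douglas' lemma and the commutation argument complete the construction of $H_r$, and the remaining identities follow as above.
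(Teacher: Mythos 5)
Your outer reduction is exactly the paper's: once a bounded $H_r(L) \in \mult \otimes \C^{1\times N}$ with $\Theta(rL)H_r(L) = B(rL)$ exists, injectivity of $\Theta(rL)$ yields $H_r(L)S(rL) = I_{\hardy}$, and the pointwise left invertibility of $S$ follows by evaluation; likewise your qualitative inclusion $\ker{\Theta(rL)^*} \subseteq \ker{B(rL)^*}$ via $\mr{Sing}(\Theta) = \mr{Sing}(B)$ (at the infinite level, with the point $Z = rL$) matches the paper. But your quantitative step --- the normal-families contradiction modelled on Theorem \ref{InvOut} --- has a fatal flaw: the conclusion you want to contradict is not contradictory. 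A nonzero vector $h$ with $\Theta(L)^*h = 0$ but $B(L)^*h \neq 0$ exists precisely when the column-inner $S$ is nontrivial, since $\ran{\Theta(L)}^\perp \ominus \ran{B(L)}^\perp = \ran{B(L)} \ominus \ran{\Theta(L)} \neq \{0\}$ unless $S$ is a unimodular constant. Membership in $\ker{\Theta(L)^*}$ does not make $h$ a singularity kernel: the kernels $K\{Z,y,v\}$ with $(Z,y) \in \mr{Sing}(\Theta)$ span only $\scr{S}(\Theta)^\perp = \ran{B(L)}^\perp$, a proper subspace of $\ran{\Theta(L)}^\perp$ in general, and a weak limit of kernels at matrix points of growing size is merely an element of $\hardy$, not a kernel vector (exactly as in the proof of Theorem \ref{InvOut}, where the limit is only certified nonzero by its vacuum coefficient). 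So ``translating back through \eqref{NCadjmult}'' to manufacture a point of $\mr{Sing}(\Theta)\setminus\mr{Sing}(B)$ is unavailable. Two further steps are also unjustified: failure of $B(rL)B(rL)^* \le \lambda^2\,\Theta(rL)\Theta(rL)^*$ for every $\lambda$ only produces unit vectors $x_n$ with $\|\Theta(rL)^*x_n\| \to 0$, and nothing forces $\|B(rL)^*x_n\|$ to stay bounded below (both norms may tend to $0$); and even granting such a lower bound, it does not survive passage to a weak limit, since weak limits can lose all their norm --- in Theorem \ref{InvOut} there is no analogous second condition to preserve.

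The paper's proof avoids any quantitative comparison against $\Theta(rL)$ altogether, by a two-radius trick that is the idea missing from your plan. Fix $s$ with $r < s < 1$ and take the inner-outer factorization $\Theta(sL) = \Ga_s(L)F_s(L)$. Because $\Ga_s(L)$ is inner, its range is \emph{closed}, so the qualitative inclusion $\ran{B(sL)}^{-\|\cdot\|} \subseteq \ran{\Theta(sL)}^{-\|\cdot\|} = \ran{\Ga_s(L)}$ already feeds the Douglas Factorization Lemma with the needed bound for free: $\Ga_s(L)\Ga_s(L)^*$ is a projection dominating $\|B(sL)\|^{-2}B(sL)B(sL)^*$. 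This gives a bounded $G_s(L)$ with $B(sL) = \Ga_s(L)G_s(L)$, a genuine multiplier by the commutant theorem. Rescaling by $t = r/s < 1$ and invoking Theorem \ref{InvOut} to invert the strictly rescaled outer factor $F_s(tL)$ yields $B(rL) = \Ga_s(tL)G_s(tL) = \Theta(rL)\,F_s(tL)^{-1}G_s(tL) =: \Theta(rL)H_r(L)$. In other words, the quantitative content is carried entirely by Theorem \ref{InvOut} applied to the outer factor at a smaller relative radius, not by a compactness estimate; your proposed estimate cannot be repaired as stated, because the inequality you seek, while true a posteriori, is exactly equivalent to the conclusion being proved.
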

\begin{proof}
We have that $\Theta (L) = B  (L) S (L)$. For any $0<r<1$, let, 
$$ \Theta (rL) = \Ga _r (L) F_r (L), $$ be the inner-outer factorization of $\Theta (rL)$.
Fix $0<r<1$ and choose $0<s<1$ so that $s>r$. Then, if $0<t<1$ it follows that 
$$ \Theta (stL) = \Ga _s (tL) F_s (tL), $$ where now $F_s (tL)$ is an invertible left multiplier by Theorem \ref{InvOut} so that 
$$ \Ga _s (tL) =\Theta (st L) F_s (t L) ^{-1}. $$ 

By definition of $B  (L)$, it follows that if $(Z,y) \in \mr{Sing} (\Theta )$ so that 
$$ y^* \Theta (Z) =0, $$ then necessarily, 
$$ y^* B  (Z) =0, $$ and this shows that 
$$ \ran{\Theta (Z) } ^{\perp} \subseteq \ran{B  (Z) } ^\perp, $$ for any $Z \in \B ^d _n$, $n \in \N \cup \{ \infty \}$.  In particular, for any $0<r<1$, taking $Z = rL$,
$$ \ran{ B  (rL) } ^{- \| \cdot \| } \subseteq \ran{\Theta (rL) } ^{-\| \cdot \| } = \ran{\Ga _r (L) }. $$ Applying Douglas Factorization and using that $\mult$ is the commutant of the algebra of right multipliers \cite[Theorem 1.2]{DP-inv}, it again follows that there is a bounded left row multiplier $ G_r  (L)$ so that 
$$ B  (rL)  =  \Ga _r (L )  G_r  (L), $$ and finally,
\ba B  (st L) & = &  \Ga _s ( t L ) G _s  (t L) \nn \\
& = & \Theta (st L) \underbrace{F_s (t L) ^{-1} G _s   (t L) }_{=: \check{H}  _s ( t L) }. \nn \ea In particular, since we fixed $s>r$, we can choose $0<t<1$ so that $st = r$, and 
$$ B (r L) = \Theta (rL) H_r (L), \quad  \mbox{where} \quad  H_r (L) := \check{H} _{r/t} (tL) \in \mult \otimes \C ^{1\times N}.$$  This proves the existence of $H_r$. Since $B(rL) = \Theta (rL) H_r (L)$, and $\Theta (rL)$ is injective \cite[Theorem 1.7]{DP-inv}, it follows that $ \ker{B(rL)} = \ker{H_r (L)}$.

For any $0<r<1$,
\ba \Theta (rL) & = & B  (rL) S (rL) \nn \\ 
& = & \Theta (rL) H  _r (L) S (rL). \nn \ea Again, since $\Theta (rL)$ is injective, it follows that $ H  _r (L) S (rL) = I _{\hardy}$.
\end{proof}
\begin{cor} \label{idempotent}
The matrix-valued left multiplier
$$ E _r (L) := S (rL) H _r  (L) \in \mult \otimes \C ^{N \times N}, $$ is idempotent. For any $0<r<1$, 
$$ \ran{I-E_r(L)} = \ker{E_r (L)} = \ker{B(rL)} = \ker{ H_r (L)}. $$ 
For any $0<r, s<1$, $H_{r\cdot s} (L) = H_r (sL)$ and $E_{r\cdot s } (L) = E_r (sL )$.
\end{cor}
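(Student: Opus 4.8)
The plan is to prove the three assertions in turn: idempotency, the chain of kernel/range identities, and the functional equations $H_{r\cdot s}(L) = H_r(sL)$, $E_{r\cdot s}(L) = E_r(sL)$. The first two are short and rest directly on the two identities furnished by Corollary \ref{NCsingular}, namely $B(rL) = \Theta(rL) H_r(L)$ and $H_r(L) S(rL) = I_{\hardy}$; the third is the substantive part and will follow once I record that $H_r(L)$ is uniquely determined by its defining relation.

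For idempotency I would simply compute, using $H_r(L) S(rL) = I_{\hardy}$,
$$ E_r(L)^2 = S(rL)\,\bigl[H_r(L)\, S(rL)\bigr]\, H_r(L) = S(rL)\, H_r(L) = E_r(L). $$
For the kernel/range identities, the equality $\ran{I - E_r(L)} = \ker{E_r(L)}$ is the standard fact about any idempotent (from $E_r(L)(I - E_r(L)) = 0$, and from $x = (I - E_r(L))x$ whenever $E_r(L)x = 0$). The inclusion $\ker{H_r(L)} \subseteq \ker{E_r(L)}$ is immediate from $E_r(L) = S(rL) H_r(L)$, and the reverse inclusion follows by applying $H_r(L)$ to the equation $E_r(L)x = S(rL) H_r(L) x = 0$ and using $H_r(L) S(rL) = I_{\hardy}$ to get $H_r(L) x = 0$. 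The remaining equality $\ker{B(rL)} = \ker{H_r(L)}$ was already recorded in the proof of Corollary \ref{NCsingular} (it comes from $B(rL) = \Theta(rL) H_r(L)$ and the injectivity of $\Theta(rL)$). Concatenating these gives the full chain $\ran{I - E_r(L)} = \ker{E_r(L)} = \ker{B(rL)} = \ker{H_r(L)}$.

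For the functional equations, the key observation is that $H_r(L)$ is the \emph{unique} row multiplier satisfying $B(rL) = \Theta(rL) H_r(L)$: since $\Theta(rL)$ is injective for every $0 < r < 1$, this relation pins down the operator $H_r(L)$ (independently of the auxiliary inner–outer and Douglas factorizations used to build it). Granting this, I would apply the completely contractive homomorphism $\varphi_s$ to $B(rL) = \Theta(rL) H_r(L)$. Since $\varphi_s$ is multiplicative and the rescalings compose, $\varphi_s \circ \varphi_r = \varphi_{r\cdot s}$ (the coefficient of $Z^\alpha$ acquires the factor $r^{|\alpha|} s^{|\alpha|} = (rs)^{|\alpha|}$), so $\varphi_s(B(rL)) = B(rsL)$, $\varphi_s(\Theta(rL)) = \Theta(rsL)$, and $\varphi_s(H_r(L)) = H_r(sL) \in \mult \otimes \C^{1\times N}$. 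This yields
$$ B(rsL) = \Theta(rsL)\, H_r(sL). $$
Comparing with the defining relation $B(rsL) = \Theta(rsL)\, H_{r\cdot s}(L)$ at the parameter $rs < 1$, subtracting gives $\Theta(rsL)\bigl(H_r(sL) - H_{r\cdot s}(L)\bigr) = 0$, and injectivity of $\Theta(rsL)$ forces $H_{r\cdot s}(L) = H_r(sL)$. Finally $E_{r\cdot s}(L) = E_r(sL)$ follows by applying $\varphi_s$ to $E_r(L) = S(rL) H_r(L)$ and using $\varphi_s(S(rL)) = S(rsL)$ together with the identity $\varphi_s(H_r(L)) = H_r(sL) = H_{r\cdot s}(L)$ just established.

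The main obstacle I anticipate is cleanly justifying the uniqueness/well-definedness of $H_r(L)$ — that is, confirming that injectivity of $\Theta(rL)$ genuinely determines $H_r(L)$ from $B(rL) = \Theta(rL) H_r(L)$, so that the functional equation compares well-defined objects rather than particular representatives produced in Corollary \ref{NCsingular}. Once this is in hand, everything downstream is formal: it is the semigroup property of the rescaling maps $\varphi_r$ together with their multiplicativity.
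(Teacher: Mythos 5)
Your proposal is correct and takes essentially the same route as the paper's proof: the identical one-line computation for idempotency via $H_r(L)S(rL)=I_{\hardy}$, the same kernel identities (the paper phrases the inclusion $\ran{I-E_r(L)}\subseteq \ker{B(rL)}$ through the computation $B(rL)=B(rL)E_r(L)$, which is a trivial rearrangement of your chain $\ker{E_r(L)}=\ker{H_r(L)}=\ker{B(rL)}$), and the same application of $\varphi_s$ to $B(rL)=\Theta(rL)H_r(L)$ followed by cancellation of the injective $\Theta(rsL)$. The uniqueness worry you flag at the end is precisely resolved by that injectivity, exactly as in the paper, so nothing further is needed.
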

\begin{proof}
If we define $E_r (L) := S (rL) H  _r (L)$, then 
$$ E_r (L) E_r (L) = S (rL) \underbrace{H  _r (L) S (rL)}_{=I_{\hardy}} H  _r (L) = E_r (L), $$ proving that every $E_r$ is idempotent. Also,
\ba B (rL) & = & \Theta (rL)  H  _r (L) \nn \\
& = & B  (rL) E_r (L), \nn \ea and it follows that the idempotent 
$$ e_r (L) := (I_{\hardy} \otimes I_N) - E_r (L), $$ takes values in the kernel of $B  (rL)$. Conversely, consider $E_r (L) = S(rL) H_r (L)$. Clearly, $\ker{H_r (L)} \subseteq \ker{E_r (L)} = \ran{ I - E_r (L)} $, and on the other hand if $E_r (L) x =0$ then 
$$ 0 = H_r (L) E_r (L) x = \underbrace{H_r (L) S(rL)}_{=I} H_r (L) x, $$ so that $\ker{B(rL)} = \ker{H _r (L)} = \ker{E_r(L)} = \ran{I-E_r (L)}$.

Since $B(rL) = \Theta (rL) H_r (L)$, it follows that 
\ba \Theta (rs L) H_r (s L) & = & \varphi _s \left( \Theta (rL) H_r (L) \right) \nn \\
& = & \varphi _s (B_r (L) ) = B (rs L) \nn \\
& = & \Theta (rs L ) H_{rs} (L). \nn \ea 
It follows that 
$$ \Theta (rs L) \left( H_{rs} (L) - H_r (sL) \right) =0, $$ and since $\Theta (rs L)$ is injective, $H_r (sL) = H_{rs} (L)$. Then, by definition of $E_r (L)$, 
$$ E_{rs} (L) = S(rs L) H_{rs} (L) = S(rs L ) H_r (sL) = E_r (sL). $$ 
\end{proof}
\begin{remark}
By \cite[Corollary 1.8]{DP-inv}, the algebra $	\mult$ contains no non-trivial idempotents. This result can be extended in a natural way to $\mult \otimes \C ^{N \times N}$ to show that any NC idempotent $E \in \mult \otimes \C ^{N \times N}$  factors as:
$$ E  (L) = T (L) ^{-1} \left( I \otimes P \right) T (L), $$ where $T (L) \in \mult \otimes \C ^{N\times N}$ is invertible and $P \in \C ^{N \times N}$ is a fixed projection, see Appendix \ref{appendix}.
\end{remark}
\begin{remark} \label{rdepend}
Define operator-valued functions in $\B ^d _{\aleph _0}$ by
$$ H(Z) := H_r (Z/r), \quad \mbox{and} \quad E(Z) := E_r (Z/r ), $$ where if $\| Z \| = s <1$ then $r$ is any value so that $0<s<r<1$. This is well-defined since if $0<s=\| Z \| < r < t <1$, then 
$$ H(Z) = H_r (Z/r) = H _{t \cdot r/t} (Z/r) = H_t (Z/t). $$ Then $H, E$ can be identified with operator-valued free NC functions in $\B ^d _{\aleph _0}$ (see Remark \ref{matrixNC}), and they are uniformly bounded on balls $r \B ^d _{\aleph _0}$ of radius $r<1$.
\end{remark}

\section{NC Blaschke-Singular-Outer Factorization} \label{wanddim}

Consider the net of operator-valued left multipliers $B(rL) \in \mult \otimes \C ^{1\times N}$ for $0<r\leq 1$. Define the closed $R-$invariant subspaces $\scr{M} _r := \ran{B(rL)} ^{-\| \cdot \|}$, and let $Q_r$ denote the orthogonal projections onto these spaces. Recall then, that 
$$ P_r ^\perp := R (Q_r \otimes I_d ) R ^*, $$ is the projection onto the range of the row isometry $R | _{\scr{M} _r \otimes \C ^d}$, and that the \emph{wandering space} of $\scr{M} _r$ is defined to be the subspace:
$$ \scr{W} _r := \scr{M} _r \ominus R \scr{M} _r \otimes \C ^d, $$ with orthogonal projector
$$ P_r := Q_r - R (Q_r \otimes I_d ) R^*. $$ Elements of $\scr{W} _r = \ran{P_r}$ are called \emph{wandering vectors}, and if $\{ \Om _{r ; k } \} _{k=1} ^{N_r} $ is an orthonormal basis of wandering vectors then, 
$$ \Om _r (L) := \left( \Om _{r ; 1} (L) , \cdots , \Om _{r ; N_r } (L) \right); \quad \quad \Om _{r;k} (L) := M^L _{\Om _{r;k}}, $$ is a left-inner row multiplier with $\ran{\Om _r (L)} = \scr{M} _r$. We will call
$$ N_r  = \mr{dim} (\scr{W} _r ), $$ the \emph{wandering dimension} of $\scr{M} _r$. We then have the NC inner-outer factorization:
$$ B(rL) = \Om _r (L) F_r (L); \quad \Om _r (L) \in \mult \otimes \C ^{1 \times N_r}, \  F_r(L) \in \mult \otimes \C ^{N _r \times N}, $$ where $F_r (L) := \Om _r (L) ^* B(rL)$ is NC left outer for every $0<r \leq 1$. (Here, note that the Douglas Factorization Lemma implies the existence of a bounded linear operator $F_r$ so that $B(rL )= \Om _r (L) F_r$. Since $\Om _r (L)$ is an isometry, and $B(rL)$ is a contraction, $F_r ^* F_r = B(rL) ^* B(rL) < I$, so that $F_r$ is therefore also a contraction. Again using that $\Om _r (L)$ is an isometry, one can verify that each component of $F_r$ commutes with the right shifts, so that $F_r = F_r (L)$ is a left operator-valued multiplier \cite[Theorem 1.2]{DP-inv}. Moreover, $F_r =F_r(L)$ has dense range by construction, and is therefore NC outer.) The goal of this section is to prove that $B(rL)$ is injective for $0 < r \leq 1$. Our NC Blaschke-Singular-Outer factorization theorem will be an easy consequence of this fact.

The following lemma is a straightforward observation, but we would like to emphasize the distinction of the cases $d \in \N$ and $d=\infty$. In the notation of the previous discussion:
\begin{lemma} \label{lem:wand_proj_conv}
If $Q_r \stackrel{SOT}{\rightarrow}  Q$, then $P_r \stackrel{SOT}{\rightarrow}  P$.
\end{lemma}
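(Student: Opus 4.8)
The plan is to read off from the definitions that
$$
P_r = Q_r - R\,(Q_r \otimes I_d)\,R^*, \qquad P = Q - R\,(Q \otimes I_d)\,R^*,
$$
where the second expression is precisely the wandering projection of $\scr{M} := \ran{Q}$, and then to show that each of the two summands of $P_r$ converges in the strong operator topology to the corresponding summand of $P$. The first summand is handled by hypothesis: $Q_r \stackrel{SOT}{\rightarrow} Q$. For the second, I would use that $R = (R_1, \dots, R_d)$ is a row isometry (in particular a contraction) and that pre- and post-composition by a fixed bounded operator is SOT-continuous. Concretely, for fixed $x$ set $z := R^* x$; if I can show $(Q_r \otimes I_d) z \to (Q \otimes I_d) z$, then applying the bounded operator $R$ gives $R(Q_r \otimes I_d)R^* x \to R(Q \otimes I_d)R^* x$. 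Thus the whole lemma reduces to the single claim that $Q_r \otimes I_d \stackrel{SOT}{\rightarrow} Q \otimes I_d$.

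This is exactly the point at which the cases $d \in \N$ and $d = \infty$ diverge, and where I would place the emphasis. When $d \in \N$, the operator $R(Q_r \otimes I_d)R^* = \sum_{k=1}^d R_k Q_r R_k^*$ is a \emph{finite} sum, and convergence is immediate: for each fixed $k$ one applies the hypothesis to the fixed vector $R_k^* x$ and then the bounded operator $R_k$, and a finite sum of convergent sequences converges. When $d = \infty$ the sum $\sum_{k=1}^\infty R_k Q_r R_k^* x$ is infinite, and one cannot naively interchange the net limit with the series; the tail of the series must be controlled uniformly in $r$.

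The main obstacle is therefore precisely this uniform tail control in the $d = \infty$ case, and the key is that the projections $Q_r$ are uniformly bounded, $\|Q_r\| \leq 1$. Writing $y := R^* x = \sum_{k} y_k \otimes e_k \in \hardy \otimes \C^d$ with $\sum_k \|y_k\|^2 < \infty$, one has
$$
\big\| (Q_r \otimes I_d - Q \otimes I_d)\, y \big\|^2 = \sum_{k} \big\| (Q_r - Q)\, y_k \big\|^2,
$$
and each summand is dominated by $4\,\|y_k\|^2$, which is summable. A dominated-convergence argument then finishes the proof: given $\eps > 0$, choose $K$ so that $\sum_{k > K} 4\,\|y_k\|^2 < \eps/2$, and observe that the remaining finite head $\sum_{k \leq K} \|(Q_r - Q) y_k\|^2$ converges to $0$ term by term, hence is eventually $< \eps/2$. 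This shows $(Q_r \otimes I_d) y \to (Q \otimes I_d) y$, completing the reduction and hence the lemma. (Equivalently, one may note that SOT convergence of a uniformly bounded net is preserved under tensoring with any fixed identity, since it holds on the dense set of elementary tensors and extends by the uniform bound — a step that is automatic for finite $d$ but genuinely uses $\|Q_r\| \leq 1$ when $d = \infty$.)
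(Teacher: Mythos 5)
Your proof is correct and takes essentially the same route as the paper: both reduce the lemma to the claim $Q_r \otimes I_d \stackrel{SOT}{\rightarrow} Q \otimes I_d$, treating the finite-$d$ case as immediate and resting the $d=\infty$ case on the uniform bound $\| Q_r \| \leq 1$. The only difference is that where the paper cites \cite[Lemma 2.5]{Takesaki} (strong and $\sigma$-strong topologies agree on bounded sets), you supply the elementary dominated-convergence argument that proves this fact directly, which is a perfectly sound self-contained substitute.
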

\begin{proof}
This is a consequence of the fact that $Q_r \otimes I_d \stackrel{SOT}{\rightarrow}  Q \otimes I_d$. The convergence is immediate, if $d \in \N$. For $d=\infty$, this is equivalent to $Q_r \stackrel{\sigma-SOT}{\rightarrow} Q$. This latter claim follows from the fact that the $Q_r$ are bounded and \cite[Lemma 2.5]{Takesaki}. 
\end{proof}

The main part of the following lemma is implicit in the work of Davidson and Pitts \cite{DP-inv}.

\begin{lemma} \label{monrank}
Let $A (L) \in \mult \otimes \mc{L} (\cH, \J )$ be any left multiplier, and set $\scr{M} _r := \ran{A (rL)} ^{-\| \cdot \|} $. The wandering dimension of $\scr{M} _r$ is non-decreasing as $r \uparrow 1$. Furthermore, if $\scr{W}_r$ is the wandering subspace of $\scr{M}_r$ and $P_r$ is the projection onto $\scr{W}_r$, then $\scr{W}_r = \left( P_r \Phi_r \scr{W}_1\right)^{-\|\cdot\|}$.
\end{lemma}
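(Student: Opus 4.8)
The plan is to reduce both assertions to a single relation comparing the subspaces $\scr{M}_r$ at different radii through the rescaling map, and then read everything off from the non-commutative Wold/Beurling decomposition. I would open with two intertwining identities:
$$ \Phi _r A(L) = A(rL) \Phi _r \quad \mbox{and} \quad \Phi _r R_k = r \, R_k \Phi _r, \quad 1 \le k \le d. $$
The first is exactly the module-intertwining property of $\Phi _r$ from the previous lemma with $F(L) = A(L)$; the second is an immediate computation on the monomial basis $e_\alpha = L^\alpha 1$, since $\Phi _r$ scales the degree-$|\alpha|$ component by $r^{|\alpha|}$ while $R_k$ raises degree by one, and iterating gives $\Phi _r R^\alpha = r^{|\alpha|} R^\alpha \Phi _r$. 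Using that $\Phi _r$ is a bounded quasi-affinity (injective with dense range) I would then prove
$$ \scr{M} _r = \left( \Phi _r \scr{M} _1 \right) ^{-\| \cdot \|}, $$
and more generally $\scr{M} _r = (\Phi _{r/s} \scr{M} _s)^{-\|\cdot\|}$ for $0 < r \le s \le 1$ (via $\Phi _r = \Phi _{r/s} \Phi _s$ and $A(rL) = \varphi _{r/s}(A(sL))$). One inclusion is clear from boundedness of $\Phi _r$ together with $A(rL)\Phi _r = \Phi _r A(L)$; the reverse uses density of $\ran{\Phi _r}$ to approximate an arbitrary $A(rL) g$ by $A(rL)\Phi _r g_n = \Phi _r A(L) g_n \in \Phi _r \scr{M} _1$.

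Next I would note that $\scr{M} _r$ is $R$-invariant, since left multipliers commute with the right shifts, so $R_k \ran{A(rL)} \subseteq \ran{A(rL)}$; and that the restriction of the row isometry $R$ to any invariant subspace of Fock space is pure, so the non-commutative Wold decomposition yields $\scr{M} _s = \bigvee _{\alpha \in \F ^d} R^\alpha \scr{W} _s$ with $\scr{W} _s = P_s \scr{M} _s$. The core computation is then that $P_r$ annihilates all higher-degree pieces: because $P_r$ projects onto $\scr{W} _r = \scr{M} _r \ominus R(\scr{M} _r \otimes \C ^d)$ and $R^\alpha \scr{M} _r \subseteq R(\scr{M} _r \otimes \C ^d)$ for $|\alpha| \ge 1$, we get $P_r R^\alpha \Phi _r w = r^{|\alpha|} P_r R^\alpha (\Phi _r w) = 0$ for every $w \in \scr{W} _1$ and $|\alpha| \ge 1$, while the term for $\alpha = \emptyset$ survives as $P_r \Phi _r w$. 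Combining this with $\scr{W} _r = P_r \scr{M} _r = P_r (\Phi _r \scr{M} _1)^{-} = (P_r \Phi _r \scr{M} _1)^{-}$ (the last equality holds because $P_r$ is bounded and $\Phi _r \scr{M} _1 \subseteq \scr{M} _r$, so $P_r \Phi _r \scr{M} _1 \subseteq \scr{W} _r$) and substituting $\scr{M} _1 = \bigvee _\alpha R^\alpha \scr{W} _1$ gives precisely
$$ \scr{W} _r = \left( P_r \Phi _r \scr{W} _1 \right) ^{-\| \cdot \|}. $$

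Finally, monotonicity of the wandering dimension follows by running the identical argument with $1$ replaced by any $s \in (0,1]$ and $\Phi _r$ replaced by $\Phi _{r/s}$, which gives $\scr{W} _r = (P_r \Phi _{r/s} \scr{W} _s)^{-}$ for $r \le s$. Thus $\scr{W} _r$ is the closure of the image of $\scr{W} _s$ under the bounded operator $P_r \Phi _{r/s}$, and the closure of the image of a subspace under a bounded map has Hilbert-space dimension no larger than that of the domain subspace; hence $N_r = \dim{\scr{W} _r} \le \dim{\scr{W} _s} = N_s$, i.e. $N_r$ is non-decreasing as $r \uparrow 1$. I expect the main obstacle to be the bookkeeping with closures: each time $\Phi _r$ or $P_r$ is pushed through a closed span I must verify both inclusions, relying on boundedness for one direction and on the $R$-invariance of $\scr{M} _r$ (ensuring $\Phi _r \scr{W} _1 \subseteq \scr{M} _r$ together with its full $R$-orbit) for the other. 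A secondary point to check carefully is the purity of $R|_{\scr{M} _r}$ that justifies the Wold decomposition, though this is standard for invariant subspaces of the full Fock space.
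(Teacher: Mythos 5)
Your argument is correct and is essentially the paper's own proof: both rest on the intertwinings $\Phi_r A(L) = A(rL)\Phi_r$ and $\Phi_r R^\alpha = r^{|\alpha|} R^\alpha \Phi_r$, the identity $\scr{M}_r = \left( \Phi_r \scr{M}_1 \right)^{-\|\cdot\|}$ (from density of $\ran{\Phi_r}$), the Wold/Beurling fact that the $R$-orbit of $\scr{W}_1$ spans $\scr{M}_1$, and the substitution $C(L) := A(sL)$ to deduce monotonicity of the wandering dimension from the $s=1$ case. The only cosmetic difference is that you obtain $\left(P_r \Phi_r \scr{M}_1\right)^{-\|\cdot\|} = \left(P_r \Phi_r \scr{W}_1\right)^{-\|\cdot\|}$ by the direct computation $P_r \Phi_r R^\alpha w = r^{|\alpha|} P_r R^\alpha \Phi_r w = 0$ for $|\alpha| \geq 1$, whereas the paper argues dually that any $w \in \scr{W}_r$ orthogonal to $P_r \Phi_r \scr{W}_1$ is orthogonal to the $R$-cyclic set $\Phi_r \scr{W}_1$ and hence vanishes.
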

\begin{proof}
It suffices to show that for every $0<r<1$, we have that $\scr{W}_r = P_r \Phi_r\left( \scr{W}_1\right)^{-\|\cdot\|}$. Indeed, this implies that $ \dim{\scr{W}_r } \leq \dim{\scr{W}_1}$. Moreover, for $0<t<r \leq 1$, set $C (L) = A(rL)$ and $s = t/r$, then $A(tL) = C(sL)$ and applying the lemma to $C(L)$ will yield $\dim{\scr{W} _t} \leq \dim{\scr{W} _r}$.

Now fix $0 < r < 1$, and let $\mc{W} _1 = \{w_1, \cdots ,w_k\}$ be an orthonormal basis of $\scr{W}_1$. Note that $\mc{W}_r = \Phi_r \mc{W} _1 \subset \scr{M}_r$. Moreover, note that $\Phi_r(\scr{M}_1)^{-\|\cdot\|} = \scr{M}_r$ since $\Phi_r(\hardy )$ is dense in $\hardy$. Since $r R_j \otimes I_\K \Phi_r = \Phi_r R_j \otimes I_\K$ for every $1 \leq j \leq d$ and NC polynomials in $R \otimes I_\mc{J}$ acting on $\mc{W} _1$ generate a dense linear subspace of $\scr{M}_1$, we conclude that $\mc{W}_r$ is $R\otimes I_\J-$cyclic in $\scr{M} _r$.
Let $P_r^{\perp}$ be, as above, the projection onto $\scr{M}_r \ominus \scr{W}_r$. Let $w \in \scr{W}_r \ominus P_r \mc{W}_r$ and $u \in \mc{W}_r = \Phi _r \mc{W} _1$ be arbitrary. Write $u = P_r u + P_r^{\perp} u$. For every multi-index $\alpha$ we obtain that
\[
\langle w, R^{\alpha} \otimes I_\J u \rangle = \langle w, R^{\alpha} \otimes I_\J P_r u \rangle = 0. 
\]
The first equality follows from the fact that $\scr{M}_r \ominus \scr{W}_r$ is $R-$invariant and the second since $w$ is wandering and orthogonal to $P_r \mc{W}_r$. Since $\mc{W}_r$ is a $R \otimes I_\J-$cyclic subset of $\scr{M} _r$, we conclude that $w \equiv 0$ so that $\scr{W} _r = \bigvee P_r \Phi _r \mc{W} _1 =  \left( P_r \Phi _r \scr{W} _1 \right)  ^{-\| \cdot \|}$.

\end{proof}

Let $T_r = I - Q_r$ be the projection onto $\ran{B(rL)} ^\perp$, for $r \in (0,1]$.

\begin{prop} \label{conranproj}
The projections $T_r \stackrel{SOT}{\rightarrow} T = I-Q = I - B(L) B(L) ^*$.
\end{prop}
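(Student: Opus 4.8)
The plan is to prove the equivalent statement $Q_r \stackrel{SOT}{\rightarrow} Q = B(L)B(L)^*$, where $B(L)$ is a row isometry so that $Q$ is the orthogonal projection onto $\scr{M}_1 = \ran{B(L)}$. Since $\hardy = \scr{M}_1 \oplus \scr{M}_1^\perp$ and the projections $T_r$ are uniformly bounded ($\|T_r\| \le 1$), it suffices to prove $T_r g \to Tg$ on a dense subset of each summand: namely $T_r g \to 0$ for $g \in \scr{M}_1$, and $T_r g \to g$ for $g \in \scr{M}_1^\perp$. I would emphasize at the outset that the subspaces $\scr{M}_r$ are \emph{not} monotone and need not be contained in $\scr{M}_1$ — already for $d=1$ and $B=\theta$ a single Blaschke factor vanishing at $a \neq 0$ one computes $\scr{M}_r = \{ f \mid f(a/r) = 0 \}$ for $r > |a|$, so that $\scr{M}_r \not\subseteq \scr{M}_1$ — so the argument cannot rest on comparing ranges and must instead approximate from inside each of the two spaces $\scr{M}_r$ and $\scr{M}_r^\perp$.

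For the first half, fix $g = B(L)h \in \scr{M}_1$. Using the intertwining $\Phi_r B(L) = B(rL)\Phi_r$ (the module property of $\Phi_r$), one gets $\Phi_r g = B(rL)\Phi_r h \in \ran{B(rL)} \subseteq \scr{M}_r$, hence $T_r \Phi_r g = 0$ and $T_r g = T_r(g - \Phi_r g)$, which tends to $0$ because $\Phi_r \to I$ in SOT.

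For the second half I would exploit the NC Szeg\"{o}-kernel description of the complements. By the singularity-space lemma, $\scr{S}(B(r\cdot))^\perp = \bigvee K\{Z,y,v\}$ over $(Z,y) \in \mr{Sing}(B(r\cdot))$, and since $\ran{B(rL)} \subseteq \scr{S}(B(r\cdot))$ we have $\scr{S}(B(r\cdot))^\perp \subseteq \scr{M}_r^\perp = \ran{T_r}$. The key point is that $(Z,y) \in \mr{Sing}(B(r\cdot))$ iff $y^*B(rZ) \equiv 0$ iff $(rZ,y) \in \mr{Sing}(B)$; thus for every $(W,y) \in \mr{Sing}(B)$ (necessarily $\|W\|<1$) and every $r > \|W\|$ one has $(W/r,y) \in \mr{Sing}(B(r\cdot))$, whence $K\{W/r,y,v\} \in \scr{M}_r^\perp$ and $T_r K\{W/r,y,v\} = K\{W/r,y,v\}$. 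As $r \uparrow 1$ the scaled point $W/r \to W$, and since the NC Szeg\"{o} kernel depends norm-continuously on its argument, $K\{W/r,y,v\} \to K\{W,y,v\}$ in $\hardy$. Therefore, for $g = K\{W,y,v\}$,
\[ T_r g = T_r\big(g - K\{W/r,y,v\}\big) + K\{W/r,y,v\} \longrightarrow 0 + g = g. \]
Because $B$ is Blaschke, Lemma \ref{Blakernel} gives $\scr{M}_1^\perp = \ran{B(L)}^\perp = \bigvee K\{W,y,v\}$ over $(W,y) \in \mr{Sing}(B)$, so these $g$ span a dense subset of $\scr{M}_1^\perp$, and uniform boundedness of $T_r$ upgrades the convergence to all of $\scr{M}_1^\perp$. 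Combining the two halves yields $T_r \to T$ in SOT.

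The conceptual subtlety is the lack of monotonicity noted above, which forces the two-sided ``approximation from inside'' strategy; the crux enabling identity is that scaling the singularity locus, $(W,y) \mapsto (W/r,y)$, produces kernels lying \emph{exactly} in $\scr{M}_r^\perp$ that converge to a spanning set of $\scr{M}_1^\perp$. The two genuine verifications I expect to be the main work are: (i) norm-continuity of $Z \mapsto K\{Z,y,v\}$ under the scaling $Z \mapsto Z/r$ with $\|Z\|<1$ fixed, which follows from uniform convergence of $\sum_\alpha Z^\alpha[\,\cdot\,](Z^\alpha)^*$ on a ball $\|Z\| \le \rho < 1$; and (ii) the passage from convergence on dense spanning sets to SOT convergence of the projections, where for $d = \infty$ one must invoke the $\sigma$-SOT care already flagged in Lemma \ref{lem:wand_proj_conv}. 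With Proposition \ref{conranproj} in hand, Lemma \ref{lem:wand_proj_conv} then yields $P_r \to P$, feeding the wandering-dimension analysis.
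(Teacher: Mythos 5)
Your proof is correct, and it reaches the conclusion by a genuinely different route than the paper's. The paper runs a compactness-and-sandwich argument in the weak operator topology: it fixes a countable orthonormal basis of NC Szeg\H{o} kernels for $\ran{T}$ (available precisely because $B$ is Blaschke, Lemma \ref{Blakernel}), builds finite-rank projections $T_r(N) \leq T_r$ out of the scaled kernels $K\{r^{-1}Z(n), y_n, v_n\}$, shows that every WOT cluster point $\wt{T}$ of the net dominates each $T(N)$ and hence $T$, separately proves $\ran{\wt{T}} \subseteq \ran{T}$ (via $h_k = \Phi_{r_k} x_k \in \ran{B(L)}^\perp$, uniform boundedness, coefficientwise weak limits, and weak closedness of $\ran{T}$), deduces $\wt{T} = T$ from $\wt{T} = T\wt{T}T \leq T$, and only then upgrades WOT to SOT convergence using that all limits are projections. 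You instead get SOT convergence in a single pass by splitting $\hardy = \scr{M}_1 \oplus \scr{M}_1^\perp$. Your half on $\scr{M}_1^\perp$ uses exactly the paper's key mechanism — scaled kernels $K\{W/r, y, v\}$ lying exactly in $\scr{M}_r^\perp$ and converging in norm to $K\{W,y,v\}$; your verification (i) is the estimate $\sum_{|\alpha|=n} |\ipcn{W^\alpha v}{y}|^2 \leq \|v\|^2 \|y\|^2 \|W\|^{2n}$ already deployed in the proof of Theorem \ref{InvOut}, and it works at the infinite level and for $d = \infty$. Your half on $\scr{M}_1$, via $T_r \Phi_r B(L)h = 0$ and $\Phi_r \stackrel{SOT}{\rightarrow} I$, is a slicker replacement for the paper's range-inclusion step, which uses the same intertwining $\Phi_r B(L) = B(rL)\Phi_r$ (in adjoint form, through Lemma \ref{dilperp}) but routes it through weak subsequential limits. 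What your architecture buys: no subsequence extraction, no verification that the cluster point is a positive contraction, no operator-inequality bookkeeping, and no WOT-to-SOT upgrade; the essential inputs (the Blaschke property of $B$ and the scaling behaviour of kernels and of $\Phi_r$) are identical, so nothing is lost in generality. Two small remarks: your flagged concern (ii) about $\sigma$-SOT when $d = \infty$ is not actually needed for this proposition — that subtlety only enters when tensoring with $I_d$, as in Lemma \ref{lem:wand_proj_conv} — and your non-monotonicity example for $d=1$ is apt, matching the paper's own remark after the proof explaining why $Q_r \rightarrow Q$ cannot be extracted from spectral-projection continuity of $B(rL)B(rL)^*$ alone.
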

\begin{lemma} \label{dilsing}
For any $0< r \leq 1$, $\mr{Sing} ( B(rL) )$ is the set of all $(Z,y)$ so that $(rZ, y) \in \mr{Sing} (\Theta )$. 
\end{lemma}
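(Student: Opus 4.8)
The plan is to reduce the statement to the already-established identity $\mr{Sing}(\Theta) = \mr{Sing}(B)$ from Proposition \ref{rowcolprop}, using the observation that the argument-rescaling map $\varphi_r$ merely evaluates the underlying NC function at a rescaled point. First I would record the elementary fact that for $Z \in \B^d_n$ and $0 < r \leq 1$ the tuple $rZ$ is again a strict row contraction, so $rZ \in \B^d_n$; hence $(rZ, y)$ is a legitimate point of $\B^d_n \times \C^n$ at which to test membership in $\mr{Sing}(\Theta)$, and this is the only place the hypothesis $0 < r \leq 1$ is used.

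Next I would make precise the identification of the operator $B(rL) = \varphi_r(B(L))$ with an NC function. By definition $\varphi_r$ rescales the Taylor coefficients $\hat{B}_\alpha \mapsto r^{|\alpha|}\hat{B}_\alpha$, and since $\sum_\alpha r^{|\alpha|}\hat{B}_\alpha Z^\alpha = \sum_\alpha \hat{B}_\alpha (rZ)^\alpha$, the NC (row) function represented by $B(rL)$ is exactly $Z \mapsto B(rZ)$. Reading off Definition \ref{leftsingdef} directly then gives that $(Z,y) \in \mr{Sing}(B(rL))$ if and only if $y^* B(rZ) \equiv 0$, which is precisely the condition that $(rZ, y) \in \mr{Sing}(B)$. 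Finally I would invoke Proposition \ref{rowcolprop}, which supplies $\mr{Sing}(B) = \mr{Sing}(\Theta)$, to replace $\mr{Sing}(B)$ by $\mr{Sing}(\Theta)$ and conclude that $\mr{Sing}(B(rL)) = \{(Z,y) : (rZ, y) \in \mr{Sing}(\Theta)\}$.

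I do not expect a serious obstacle here; the entire content is the bookkeeping in the identification $B(rL) \leftrightarrow B(r \cdot)$. The only point that genuinely needs care is matching the operator-valued meaning of $y^* B(rZ) \equiv 0$ (for the row multiplier $B$, as spelled out after Definition \ref{leftsingdef}) with the scalar statement $y^* \Theta(rZ) \equiv 0$ used for $\Theta$. Since $\Theta = B \cdot S$, the implication $y^* B(rZ) \equiv 0 \Rightarrow y^* \Theta(rZ) \equiv 0$ is immediate upon right-multiplying by $S(rZ)$, while the reverse implication is exactly the content of the equality of singularity loci in Proposition \ref{rowcolprop} evaluated at the point $rZ$. (One could equally phrase the same computation through the adjoint action \eqref{NCadjmult} on NC Szeg\"o kernel vectors, but the NC-function description above is the most economical route.)
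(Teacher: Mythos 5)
Your proposal is correct and follows essentially the same route as the paper, whose entire proof is the one-line tautology ``$y^* \Theta(rZ) = 0$ if and only if $(rZ,y) \in \mr{Sing}(\Theta)$'' resting implicitly on the identification of $B(rL)$ with the NC function $Z \mapsto B(rZ)$ and on the equality $\mr{Sing}(B) = \mr{Sing}(\Theta)$ from Proposition \ref{rowcolprop}. You have simply made those two implicit ingredients explicit, which is a faithful (and arguably more careful) unpacking of the same argument.
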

\begin{proof}
One has $y^* \Theta (rZ) = 0 $ if and only if $(rZ , y) \in \mr{Sing} (\Theta ).$ 
\end{proof}
\begin{lemma} \label{dilperp}
Let $\mc{S} \subset \hardy$ be any linear subspace. A vector $x \in \hardy$ is orthogonal to $\Phi _r \mc{S}$ if and only if $x_r = \Phi  _r x$ is orthogonal to $\mc{S}$.  
\end{lemma}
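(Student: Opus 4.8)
The plan is to reduce the statement to the self-adjointness of the argument re-scaling map $\Phi _r$, which was already recorded in the lemma preceding Theorem \ref{InvOut}. Recall that on the orthonormal basis $\{ L^\alpha 1 \} _{\alpha \in \F ^d}$ of \hardy the operator $\Phi _r$ acts diagonally by $L^\alpha 1 \mapsto r^{|\alpha|} L^\alpha 1$; since each scaling factor $r^{|\alpha|}$ is real, $\Phi _r$ is a bounded self-adjoint operator, $\Phi _r ^* = \Phi _r$. This is the only structural fact I would need.

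Given this, the argument is a one-line inner-product manipulation. By definition, $x$ is orthogonal to $\Phi _r \mc{S}$ precisely when $\ip{x}{\Phi _r s} = 0$ for every $s \in \mc{S}$. Using $\Phi _r ^* = \Phi _r$ I would rewrite
\[
\ip{x}{\Phi _r s} = \ip{\Phi _r x}{s} = \ip{x_r}{s},
\]
so that the condition $\ip{x}{\Phi _r s} = 0$ for all $s \in \mc{S}$ is literally identical to the condition $\ip{x_r}{s} = 0$ for all $s \in \mc{S}$, i.e.\ to $x_r \perp \mc{S}$. Reading the chain of equalities in either direction establishes both implications of the lemma simultaneously.

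There is essentially no obstacle here: the entire content is absorbed into the previously established self-adjointness of $\Phi _r$, and no closedness (or even subspace) hypothesis beyond what is stated is genuinely used, since orthogonality to a set is tested element by element. If one wishes the operator-valued analogue, the identical computation goes through verbatim on $\hardy \otimes \cH$, since $\Phi _r$ remains self-adjoint there as well.
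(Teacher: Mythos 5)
Your proof is correct and is exactly the paper's argument: the authors also dispose of this lemma in one line by invoking the self-adjointness of $\Phi_r$ (recorded in the lemma on the properties of $\Phi_r$ and $\varphi_r$), and your inner-product computation $\ip{x}{\Phi_r s} = \ip{\Phi_r x}{s}$ is the content they leave implicit. No gaps.
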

\begin{proof}
This follows immediately from the fact that $\Phi _r$ is self-adjoint.
\end{proof}
\begin{lemma} \label{kerform}
Any NC Szeg\"{o} kernel vector, $K \{ Z ,y , v \}$ for $Z \in \B ^d _n$, $y,v \in \C ^n$, $n \in \N \cup \{ \infty \}$ is given by the formula:
$$ K \{ Z , y ,v \} = \sum _{\alpha \in \F ^d} \ipcn{Z^\alpha v}{y} L^\alpha 1. $$ 
For any $r \in [0,1]$, $\Phi _r K \{Z ,y , v \} = K \{ r Z , y , v \}$.
\end{lemma}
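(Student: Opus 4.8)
The plan is to read off the Fourier coefficients of $K\{Z,y,v\}$ against the standard orthonormal basis $\{L^\alpha 1\}_{\alpha \in \F^d}$ of Fock space, and then to obtain the scaling identity by letting $\Phi_r$ act term-by-term. Since $\hardy$ is an NC-RKHS, the kernel vector $K\{Z,y,v\} = K_Z(yv^*)$ is a genuine element of Fock space, so it admits an expansion $K\{Z,y,v\} = \sum_{\alpha \in \F^d} c_\alpha\, L^\alpha 1$ with square-summable coefficients $c_\alpha = \ip{L^\alpha 1}{K\{Z,y,v\}}_{\hardy}$.

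To compute $c_\alpha$, I would first invoke the reproducing property $\ip{K\{Z,y,v\}}{f}_{\hardy} = \ipcn{y}{f(Z)v}$, valid for every $f \in \hardy$. Taking $f = L^\alpha 1$ and using that $(L^\alpha 1)(Z) = Z^\alpha$ gives $\ip{K\{Z,y,v\}}{L^\alpha 1}_{\hardy} = \ipcn{y}{Z^\alpha v}$, and passing to the complex conjugate (equivalently, using the conjugate symmetry of both inner products) yields $c_\alpha = \overline{\ipcn{y}{Z^\alpha v}} = \ipcn{Z^\alpha v}{y}$, which is exactly the claimed formula. As a consistency check, Parseval's identity then reads $\| K\{Z,y,v\} \|_{\hardy}^2 = \sum_\alpha |\ipcn{Z^\alpha v}{y}|^2 = \ipcn{y}{K(Z,Z)[vv^*]y}$, recovering the normalization dictated by the defining property of the CPNC Szeg\"{o} kernel.

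For the scaling statement I would simply feed the series into the definition of the argument re-scaling map. Since $\Phi_r$ multiplies the coefficient of $L^\alpha 1$ by $r^{|\alpha|}$, we get $\Phi_r K\{Z,y,v\} = \sum_\alpha r^{|\alpha|}\ipcn{Z^\alpha v}{y}\, L^\alpha 1$. The homogeneity $(rZ)^\alpha = r^{|\alpha|} Z^\alpha$ together with $r \in [0,1]$ real gives $r^{|\alpha|}\ipcn{Z^\alpha v}{y} = \ipcn{(rZ)^\alpha v}{y}$, and since $rZ$ is again a strict row contraction, the right-hand side is precisely the coefficient series for $K\{rZ,y,v\}$ established in the first part; hence $\Phi_r K\{Z,y,v\} = K\{rZ,y,v\}$.

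There is no substantive obstacle here: the only care required is bookkeeping the conjugate-linearity conventions so that the reproducing property produces $\ipcn{Z^\alpha v}{y}$ rather than its conjugate, and observing that the term-by-term action of $\Phi_r$ is legitimate precisely because the first part supplies the basis expansion of $K\{Z,y,v\}$ with square-summable coefficients.
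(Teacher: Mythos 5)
Your proof is correct; the paper in fact states Lemma \ref{kerform} without proof, treating it as a routine computation, and your argument (reading off the coefficients against the orthonormal basis $\{L^\alpha 1\}$ via the reproducing property $\ip{K\{Z,y,v\}}{f}_{\hardy} = \ipcn{y}{f(Z)v}$, then applying $\Phi_r$ coefficient-wise and using $(rZ)^\alpha = r^{|\alpha|}Z^\alpha$) is exactly the intended one. Your conventions are consistent with the paper's usage elsewhere, e.g.\ the vacuum-coefficient computation $\ip{1}{K\{Z,e_1,e_1\}}_{\hardy} = \left( e_1, e_1 \right)_{\C^\infty}$ in the proof of Theorem \ref{InvOut}, so nothing further is needed.
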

\begin{proof}{ (of Proposition \ref{conranproj})}
We have that 
$$ \ran{T} = \ran{B(L)} ^\perp = \bigvee _{(Z,y) \in \mr{Sing} (B)} K\{ Z, y ,v \}. $$ 
Choosing a countable dense subset of kernel vectors and applying Gram-Schmidt orthgonalization (and using that linear combinations of NC kernels are NC kernels) we obtain an orthonormal basis 
$$ \{ K \{ Z(n), y _n , v_n \} \} _{n =1} ^\infty, $$ for $\ran{B(L)} ^\perp$. (Each $(Z(n), y_n)$ belongs to $\mr{Sing} (B)$, in fact, since linear combinations of NC kernels are NC kernels:
$$ K \{ Z , y , v \} + c K \{ W , x , u \} = K \{ Z\oplus W , y \oplus c \cdot x , v \oplus u \}, $$ 
and if $(Z,y), (W,x) \in \mr{Sing} (B)$, so is $(Z \oplus W , y \oplus c \cdot x)$ for any $c \in \C$, see Proposition \ref{singlocus}.  This is, however, not germane for our arguments here.)
Given any $N \in \N$, and any $0<r <1$, we define 
$T_r (N)$ as the orthogonal projection onto 
$$ \bigvee \left\{ \left. K \{ r  ^{-1} Z(n) , y _n , v _n \} \right| \ 1 \leq n \leq N \quad \mbox{and} \quad \| Z(n) \| < r \right\}. $$ Here, note that for any NC Szeg\"{o} kernel in the above set, $ \| Z (n ) \| / r <1$ so that each of these kernels is a well-defined vector in $\hardy$. If we choose $0<R_N <1$ so that 
$$ R_N = \mr{max} _{1\leq n \leq N} \| Z(n) \|, $$
then for any $r \in (R _N , 1 ]$, $T_r (N)$ is the projection onto 
$$ \bigvee _{1\leq n \leq N}  K \{ r  ^{-1} Z(n) , y _n , v _n \}. $$ 
We write $T(N) := T_1 (N)$.  Since $\Phi _r K\{ r ^{-1} Z(n), y _n , v _n \} = K\{ Z(n) , y_n , v _n \} \in \ran{B(L)} ^\perp$ by Lemma \ref{kerform}, each of the $K\{ r ^{-1} Z(n) , y_n , v_n \}$ belongs to $\ran{T_r} = \ran{B(rL)} ^\perp$ by Lemma \ref{dilperp}. (In fact, $(r^{-1} Z(n) , y_n ) \in \mr{Sing} (B(rL) )$ by Lemma \ref{dilsing}.) It follows that $T_r (N) \leq T_r$ for any $r \in ( 0 , 1]$. Further observe that by the formula of Lemma \ref{kerform},
$$ K\{ r^{-1} Z(n), y _n , v_n \} \stackrel{\longrightarrow}{\mbox{\tiny $r\uparrow 1$}} K \{ Z(n) , y_n , v _n \}, $$ 
so that $T_{r} (N) \stackrel{SOT}{\rightarrow} T(N)$ as $r \uparrow 1$. Moreover it is clear that $T(N) \stackrel{SOT}{\rightarrow} T$. 

Consider the net $(T_r )_{r \in (0, 1]}$. This is a net of projections, and hence is uniformly bounded for $0<r \leq 1$. Any subsequence $(T_{r_k} )$, for which $r_k \uparrow 1$ has a $WOT$ convergent subsequence.  Let $(T_{r_k} )$ be any such $WOT-$convergent subsequence so that as $r_k \uparrow 1$, $T_{r_k} \stackrel{WOT}{\rightarrow} \wt{T}$. Then, for any $N \in \N$,
\ba \ip{x}{\wt{T} x } & = & \lim \ip{x}{T_{r_k} x}  \nn \\
& \geq &  \lim \ip{x}{T_{r_k} (N) x} \nn \\
& = & \ip{x}{T (N) x}. \nn \ea Here, we note that since $r_k \uparrow 1$, we have that eventually $r_k > R_N$. This proves that $\wt{T} \geq T(N)$ for any $N \in \N$, and hence $\wt{T} \geq T$. Further note that $\wt{T}$ is positive semi-definite, and it is a contraction: Since $T_{r_k} \stackrel{WOT}{\rightarrow} \wt{T}$,
$$ \ip{x}{\wt{T} x}  =  \lim _k \ip{x}{T_{r_k} x} \geq 0. $$ Moreover,
\ba \| \wt{T} x \| ^2 & = & \lim | \ip{T_{r_k}x}{\wt{T} x} | \nn \\
&\leq & \limsup \| T _{r_k} x \| \| \wt{T} x \| \nn \\
& \leq & \|x \| \| \wt{T} x \|. \nn \ea This proves that $ \| \wt{T} x \| \leq \| x \|$, and $\| \wt{T} \| \leq 1$. Let $x = \wt{T} y$ be any vector in $\ran{\wt{T}}$. Then $x_k := T_{r_k} y \stackrel{w}{\rightarrow } x$, where $w$ denotes weak convergence. 
By Lemma \ref{dilperp}, we know that for each $k$, $ x_k \in \ran{B(r_k L)} ^\perp, $ so that 
$ h_k := \Phi _{r_k} x_k \in \ran{B(L)} ^\perp. $ 
Since each $\Phi _{r_k}$ is a contraction and so is $\wt{T}$, the sequence $h_k$ is uniformly bounded.  Then, for any $\alpha \in \F ^d$,
\ba \lim _k (h_k ) _\alpha \nn & = & \lim _k r_k ^{|\alpha |} (x_k )_\alpha \nn \\ 
& = &  x_\alpha, \nn \ea since $r_k \uparrow 1$, and $(x_k ) _\alpha \rightarrow x_\alpha$ since $x_k$ converges weakly to $x$. Since $\alpha \in \F ^d$ is arbitrary and the sequence $(h_k )$ is uniformly bounded, it follows that $h_k \stackrel{w}{\rightarrow} x$ (converges weakly to $x$). Moreover, each $h_k \in \ran{B(L)} ^\perp$, and closed subspaces are weakly closed, so that $x = wk-\lim h_k \in \ran{B(L)} ^\perp$ and we conclude that $\ran{\wt{T}} \subseteq \ran{T}$. Since $\wt{T}$ is a positive semi-definite contraction and $T$ is a projection, $T \wt{T} = \wt{T}$, and 
\ba T \wt{T} & = & \wt{T} = \wt{T} ^* \nn \\
& = & \wt{T} T = T \wt{T} T \nn \\
& \leq & T. \nn \ea This proves that $\wt{T} \leq T$. Earlier we proved that $\wt{T} \geq T$, and we conclude that $T = \wt{T} = WOT-\lim _k T_{r_k}.$ Since the subsequence $T_{r_k}$ was an arbitrary $WOT-$ convergent subsequence so that $r_k \uparrow 1$, it follows that the entire net $T_r$ converges in $WOT$ to $T$ as $r \uparrow 1$. Since each $T_r, T$ are projections, we then obtain that $T_r \rightarrow T$ in the strong operator topology. 
\end{proof}
\begin{remark}
Since $B(rL)$ converges $SOT-*$ to $B(L)$ as $r \uparrow 1$ (see \emph{e.g.} \cite[Lemma 6.3]{JM-NCFatou}), it follows that $B(rL) B(rL) ^* \stackrel{SOT}{\rightarrow} B(L) B(L) ^* = Q$ as $r \uparrow 1$. Since $Q$ is a non-trivial projection, its spectrum is $\{ 0 , 1 \}$, and it follows that for any $t \in (0,1)$, the spectral projections
$$ \chi _{[0, t]} ( B(rL) B(rL) ^*) \stackrel{SOT}{\rightarrow} (I -Q), $$ and 
$$ \chi _{[t,1]} (B(rL) B(rL) ^* ) \stackrel{SOT}{\rightarrow} Q, $$ where $\chi _{[a,b]}$ denotes the characteristic function of the interval $[a,b]$ \cite[Theorem VIII.24 (b)]{RnS1}. It does not immediately follow, however, that $Q_r = I - T_r $ converges to $Q$ because 
$$ Q_r = \chi _{(0, 1]} ( B(rL) ^* B(rL) ), $$ and $0$ belongs to the spectrum of $Q$, see \cite{RnS1}. The crucial fact that makes the above proof work is that if $B(L)$ is NC Blaschke, then $\ran{B(L)} ^\perp$ is spanned by NC functions which are each analytic in an NC ball of radius greater than $1$.
\end{remark}

\begin{cor} \label{Binjcor}
$B(rL)$ is injective for $r \in (0 ,1]$.
\end{cor}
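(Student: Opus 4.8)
The plan is to prove that the wandering dimension $N_r = \dim{\scr{W}_r}$ equals $N$, the number of columns of the row inner $B(L)$, for all $r$ close to $1$, to deduce injectivity of $B(rL)$ there from the resulting square outer factor, and then to push injectivity down to every $r \in (0,1]$ using the idempotents $E_r(L)$. First I would record that $N_1 = N$: since the right shift $R$ commutes with the left multiplier $B(L)$ and $B(L)$ is a row isometry with $N$ columns and range $\scr{M}_1 = \ran{B(L)}$, one has $\scr{W}_1 = B(L)(\C \cdot 1 \otimes \C^N)$, which is $N$-dimensional because $B(L)$ is isometric. Thus the wandering projection $P$ of $\scr{M}_1$ satisfies $\rank{P} = N$.

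Next, Proposition \ref{conranproj} gives $Q_r = I - T_r \xrightarrow{SOT} Q$, whence $P_r \xrightarrow{SOT} P$ by Lemma \ref{lem:wand_proj_conv}. Rank is lower semicontinuous along $SOT$-convergence of projections: an orthonormal set in $\ran{P}$ is carried by $P_r$ to a nearly orthonormal set in $\ran{P_r}$ once $r$ is close to $1$, so $\liminf_{r\uparrow 1} N_r \geq \rank{P} = N$. Since Lemma \ref{monrank} gives $N_r \leq N_1 = N$ with $N_r$ nondecreasing, I conclude $N_r = N$ for all $r$ in some interval $[r^*, 1)$ with $r^* \in (0,1)$.

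Then I would fix such an $r^*$ and use the inner-outer factorization $B(r^* L) = \Om_{r^*}(L) F_{r^*}(L)$, where $\Om_{r^*}(L) \in \mult \otimes \C^{1 \times N}$ is inner and $F_{r^*}(L) \in \mult \otimes \C^{N \times N}$ is NC outer --- square, because $N_{r^*} = N$. A square NC outer is pointwise invertible throughout $\B ^d _{\aleph_0}$ (the matrix-valued analogue of the pointwise invertibility exploited in the proof of Theorem \ref{InvOut}), and this forces injectivity: $F_{r^*}(L) g = 0$ gives $F_{r^*}(Z) g(Z) = 0$, hence $g(Z) = 0$ for every $Z$. As $\Om_{r^*}(L)$ is isometric, $B(r^* L)$ is injective, so by Corollary \ref{idempotent} the idempotent $E_{r^*}(L)$ has trivial kernel and therefore equals $I$.

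To finish, for $t \in (0, r^*)$ I would write $t = r^* s$ with $s = t/r^* \in (0,1)$ and invoke $E_t(L) = E_{r^*}(sL) = \varphi_s(E_{r^*}(L)) = \varphi_s(I) = I$, using Corollary \ref{idempotent} and the unitality of $\varphi_s$; hence $\ker{B(tL)} = \ran{I - E_t(L)} = \{0\}$. Monotonicity already yields $N_t = N$, and thus injectivity as above, for $t \in [r^*, 1)$, while $t = 1$ is the isometry $B(L)$, so all $r \in (0,1]$ are covered. The hard part is the implication used in the third paragraph, that coincidence of the wandering dimension with the column number forces injectivity, i.e. that a square NC outer is injective; pointwise invertibility settles this cleanly for finite $N$, but requires the operator-valued form of the outer-implies-pointwise-invertible statement when $N$ or $d$ is infinite. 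Equivalently, one could establish the additivity $N = N_r + \dim{\ker{B(rL)}}$ of wandering dimensions, which makes the equality $N_r = N$ literally the same assertion as injectivity.
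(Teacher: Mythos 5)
You have a genuine gap: your argument only works when $N < \infty$, but at this point in the paper $N$ is only known to lie in $\N \cup \{\infty\}$ (Proposition \ref{rowcolprop}), and Corollary \ref{Binjcor} is precisely the tool later used to prove $N = 1$, so finiteness of $N$ cannot be assumed without circularity. Two of your steps collapse when $N = \infty$. First, rank lower semicontinuity under $SOT$ convergence of projections only gives $\liminf_{r \uparrow 1} N_r \geq k$ for every finite $k$, i.e.\ $N_r \rightarrow \infty$; combined with $N_r \leq N = \infty$ and the monotonicity of Lemma \ref{monrank}, this does \emph{not} force $N_r = N$ at any fixed $r^* < 1$ — a priori $N_r$ could be finite for every $r < 1$. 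So the interval $[r^*, 1)$ on which your third paragraph operates need not exist. Second, even granting $N_{r^*} = N = \infty$, the step ``square outer implies pointwise invertible, hence injective'' fails: dense range of $F_{r^*}(L)$ yields, via the kernel-vector argument, only triviality of the \emph{left} kernel of $F_{r^*}(Z)$ (injectivity of $F_{r^*}(Z)^*$), and for an operator on an infinite-dimensional space this no longer implies injectivity of $F_{r^*}(Z)$ itself — that implication is exactly the finite-square-matrix fact your argument leans on. You honestly flag the second issue, but the fallback you propose — the additivity $N = N_r + \dim{\ker{B(rL)}}$ of wandering dimensions — is itself a nontrivial, unproven Beurling-type rank–nullity statement, not an equivalent reformulation you can simply invoke.

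It is worth noting how the paper circumvents exactly this: its proof is engineered to be uniform in $N$. It builds approximate wandering vectors $\om_{r;k} = P_r \Phi_r B_k \rightarrow B_k$ (using the same Proposition \ref{conranproj} and Lemma \ref{lem:wand_proj_conv} you cite), applies Gram–Schmidt and pads with zeros to obtain a partially isometric row $\wt{\Om}_r(L)$ of \emph{fixed} width $N$, proves $\wt{\Om}_r(L) \stackrel{WOT}{\rightarrow} B(L)$ by an $\eps$-truncation of the possibly infinite column sum, deduces $\wt{F}_r(0) \rightarrow I_N$ weakly, and concludes $e(0) = 0$, whence $e \equiv 0$ by Lemma \ref{nzerovac}. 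For finite $N$ your route is correct and genuinely different — rank semicontinuity pins $N_r = N$ near $1$; pointwise invertibility of a square outer at finite levels (the matrix analogue of the argument in the proof of Theorem \ref{InvOut}) gives injectivity; and your use of the functional equation $E_t(L) = E_{r^*}(sL)$ from Corollary \ref{idempotent} to propagate $E \equiv I$ down to all $t \in (0, r^*)$ is clean and replaces Lemma \ref{nzerovac}. But since $N = \infty$ cannot be excluded, the proof as written does not establish the corollary.
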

\begin{lemma} \label{nzerovac}
If $0 \neq h \in \ker{B(rL)}$, there is an $h' \in \ker{B(rL)}$ so that $h'(0) \neq 0 \in \C ^N$. If $e =I-E$ is the NC idempotent so that $\ran{e(rL)} = \ker{B(rL)}$, then $e _\emptyset = e (0) \equiv 0$ vanishes identically if and only if $e \equiv 0$ is identically zero.
\end{lemma}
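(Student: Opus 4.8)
The plan is to establish the second (biconditional) assertion first and then read off the first from it. Recall from Corollary~\ref{idempotent} that $e(rL) = I - E_r(L) \in \mult \otimes \C^{N\times N}$ is an idempotent with $\ran{e(rL)} = \ker{B(rL)}$, and from Remark~\ref{rdepend} that its symbol $Z \mapsto e(rZ)$ is a uniformly bounded --- hence analytic --- matrix-valued free NC function on each level $\B^d_n$, $n \in \N$.

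One implication of the biconditional is trivial, so I would suppose $e(0) = e_\emptyset = 0$ and aim to show $e \equiv 0$. First I would note that, since the assignment of symbols is multiplicative on $\mult \otimes \C^{N \times N}$, the operator identity $e(rL)^2 = e(rL)$ passes to the pointwise identity $e(rZ)^2 = e(rZ)$ for every $Z \in \B^d_n$; thus each matrix $e(rZ)$ is idempotent, and $\tr\left(e(rZ)\right) = \rank{e(rZ)} \in \{0,1,\dots,nN\}$. I would then view $Z \mapsto \tr\left(e(rZ)\right)$ as a scalar function on $\B^d_n$: it is continuous because $e(r\cdot)$ is analytic, and it is integer-valued. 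As $\B^d_n$ is convex, hence connected, this function is constant, equal to its value at the origin $\tr\left(e(0)\right) = \tr(0) = 0$. Therefore $\rank{e(rZ)} = 0$, i.e.\ $e(rZ) = 0$, for every $Z \in \B^d_n$ and every $n \in \N$; since an NC function is determined by its values on the finite levels, this forces $e \equiv 0$.

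Granting this, the first assertion is immediate. If $0 \neq h \in \ker{B(rL)}$, then $\ker{B(rL)} = \ran{e(rL)} \neq \{0\}$, so $e(rL) \neq 0$ and hence $e \not\equiv 0$; by the contrapositive of what was just proved, $e(0) = e_\emptyset \neq 0$ as a matrix. I would then choose $\xi \in \C^N$ with $e(0)\xi \neq 0$ and set $h' := e(rL)(1 \otimes \xi)$. Then $h' \in \ran{e(rL)} = \ker{B(rL)}$, and evaluating the symbol at the origin gives $h'(0) = e(0)\xi \neq 0$, as required.

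The only genuinely delicate point I anticipate is the transfer from operator-level idempotency to pointwise idempotency of the symbol, together with the continuity and integrality of its trace; once these are in place the connectedness argument is routine. As an alternative route to the biconditional, one may instead invoke the idempotent factorization $e(rL) = T(rL)^{-1}(I \otimes P)\,T(rL)$ of Appendix~\ref{appendix}: comparing constant terms yields $e(0) = T(0)^{-1} P\, T(0)$ with $T(0)$ invertible, so $e(0) = 0$ forces $P = 0$ and hence $e \equiv 0$.
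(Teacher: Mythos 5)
Your overall strategy is attractive and genuinely different from the paper's (the paper proves the first assertion directly, by a backward-shift argument: it notes $\ker{B(rL)} = \ker{B(rL)^*B(rL)}$, writes $h = R_1 h^{(1)} + \cdots + R_d h^{(d)}$ when $h(0)=0$, and peels off right shifts until a coefficient $h_\alpha \neq 0$ of minimal word length surfaces as the vacuum coefficient of $(R^\alpha)^* h \in \ker{B(rL)}$; the idempotent statement is then deduced from this). However, your argument has a genuine gap: it implicitly assumes $N < \infty$. In Proposition \ref{rowcolprop} the size of the row is $N \in \N \cup \{\infty\}$, and this cannot be presupposed finite --- the entire point of Corollary \ref{Binjcor} and Theorem \ref{NCBSOthm} is to conclude $N = 1$, and the surrounding proofs (e.g.\ the $\varepsilon$-truncation in the WOT argument of Corollary \ref{Binjcor}) are written precisely to accommodate $N = \infty$. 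When $N = \infty$, the value $e(rZ)$ at $Z \in \B^d_n$ is an idempotent \emph{operator} on $\C^n \otimes \ell^2(\N)$, for which $\tr\left(e(rZ)\right)$ is generally undefined or infinite, so the continuity-plus-integrality-plus-connectedness argument collapses. Your alternative route through the Appendix has the same defect: the factorization theorem there is stated and proved for idempotents in $\mult \otimes \C^{n\times n}$ with $n$ finite (the proof bounds the wandering dimensions by $n$ and produces $S \in \operatorname{GL}_n(\mult)$), so it does not apply to $\mult \otimes \mc{L}(\ell^2)$ as needed. (There is also a small notational slip: the Appendix produces a similarity $T(L)$, not $T(rL)$, though comparing vacuum coefficients works the same way.)

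The gap is localized and fixable without abandoning your pointwise viewpoint: replace the trace by the elementary fact that a nonzero idempotent $e$ in a Banach algebra satisfies $\|e\| = \|e^2\| \leq \|e\|^2$, hence $\|e\| \geq 1$. Since $e(rL)$ is a bounded left multiplier, its homogeneous components satisfy Popescu-type von Neumann bounds, so $Z \mapsto e(rZ)$ is norm-continuous on each level (including $N = \infty$, and indeed on $\B^d_\infty$). Consequently the set $\{Z \in \B^d_n \mid e(rZ) = 0\}$ is closed by continuity and open by the norm dichotomy, and since $\B^d_n$ is convex, hence connected, and contains $0$ with $e(0) = 0$ by hypothesis, it is all of $\B^d_n$; vanishing of the symbol on all finite levels then kills all Taylor coefficients and gives $e \equiv 0$, after which your deduction of the first assertion (choosing $\xi$ with $e(0)\xi \neq 0$ and taking $h' = e(rL)(1 \otimes \xi)$, whose vacuum coefficient is $e(0)\xi$) is correct, and in particular involves no circularity with the biconditional. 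With this repair your proof is complete and arguably more geometric than the paper's coefficient-peeling argument, at the cost of invoking pointwise multiplicativity and norm-continuity of the symbol map, which the paper's purely Fock-space-level proof avoids entirely.
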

\begin{proof}
Observe that $\ker{B(rL)} = \ker{B(rL) ^* B(rL)}$. Indeed if $B(rL) h =0$ then $B(rL) ^* B(rL) h =0$. Conversely, if 
$B(rL) ^* B(rL) h =0$, then 
$$ 0 =  \ip{h}{B(rL) ^* B(rL) h} = \| B(rL) h \| ^2, $$ and it follows that $B(rL) h =0$. 

If $h_\emptyset = h(0) = 0$, Then $h = R \mbf{h} = R_1 h^{(1)} + \cdots +  R_d h ^{(d)}$ for some $\mbf{h} \in \hardy \otimes \C ^N \otimes \C ^d$. Then 
$$ 0 = R_k ^* B(rL) ^* B(rL) h = B(rL) ^* B(rL) h^{(k)}, $$ and it follows that $h^{(k)} \in \ker{B(rL)}$ for every $1\leq k \leq d$. If $h^{(k)} _\emptyset =0$, then we can repeat this process until we ultimately end up with a $g \in \ker{B(rL)}$ so that $g(0) \neq 0$. In more detail, if $\alpha \in \F ^d$ is any word of minimal length so that $h_\alpha \neq 0$, then $g := (R^\alpha ) ^* h \in \ker{B(rL)}$, and $g(0) = g _\emptyset = h _\alpha \neq 0$. 

If $e(0) \equiv 0$, then any $h \in \ker{B(rL)} =\ran{e(rL)}$ has the form $h = e (rL) g$ for some $g \in \hardy \otimes \C ^N$, so that $h(0) = e(0) g(0) = 0$. Hence there is no $h \in \ker{B(rL)}$ so that $h(0) \neq 0$. If there was a non-zero $h \in \ker{B(rL)}$, then by the above argument there would be a non-zero $g \in \ker{B(rL)}$ so that $g(0) \neq 0$. We conclude that $\ker{B(rL)} = \{ 0 \}$ and $e \equiv 0$.
\end{proof}
\begin{proof}{ (of Corollary \ref{Binjcor})}
We have proven that if $Q_r$ is the projection onto $\ran{B(rL)} ^{-\| \cdot \|}$ that $Q_r \stackrel{SOT}{\rightarrow} Q = B(L) B(L) ^*$. Consider the inner-outer factorization of $B(rL) \in \mult \otimes \C ^{1 \times N}$. Let $\{ e_k \} _{k=1} ^N$ be the standard orthonormal basis of $\C ^N$. Then $B_k  := B(L) (1 \otimes e_k )$ is an orthonormal basis for the wandering space of $\ran{B(L)}$. Let $ P_r := Q_r - R (Q_r \otimes I_d ) R^* $, $r \in (0, 1]$ be the orthogonal projection onto the wandering subspace, $\scr{W} _r$, of $ \ran{B(rL)} ^{-\| \cdot \|}$. Then, by Lemma \ref{lem:wand_proj_conv}, $P_r \stackrel{SOT}{\rightarrow} P$, where $P$ is the projection onto the wandering space of $\ran{B(L)}$. Define $\om _{r ;k } := P_r \Phi_r(B_k)$, for every $1\leq k \leq N$. Then each $\om _{r; k}$ is a (potentially zero) wandering vector in $\ran{B(rL)} ^{-\| \cdot \|}$, and since $P_r \stackrel{SOT}{\rightarrow} P$, $\Phi_r \stackrel{SOT}{\rightarrow} I$, and both nets are bounded, we have that 
$$ \om _{r; k} = P_r \Phi_r B_k \rightarrow P B_k = B_k; \quad \quad 1 \leq k \leq N. $$ (So for any fixed $k$, $\om _{r ; k} \neq 0$ for $r$ sufficiently close to $1$.) Let $\N _N := \{ 1 , 2 , \cdots , N \}$, and set $\N  _N (0) := \{ j \in \N _N | \ \om _{r;j} =0 \}$. We define a sequence of vectors in the wandering space of $\ran{B(rL)} ^{-\| \cdot \|}$ as follows: If $k \in \N  _N (0)$, so that $\om _{r ; k} =0$ we set $\Om _{r ; k} =0$. We then apply Gram-Schmidt orthogonalization to the (ordered) sequence:
$$ \left(  \om _{r; k} \right) _{k \in \N _N \sm \N _N (0) }. $$ This produces an orthonormal sequence of vectors which we label in order by the elements of $\N \sm \N _N (0)$. Combining this with the previous sequence of zero vectors indexed by $\N _N (0)$ yields the sequence $ \left(  \Om _{r ; k }  \right) _{k=1} ^N$, consisting of wandering vectors in $\scr{W} _r$ so that the non-zero elements of this sequence form an orthonormal set. (And $\Om _{r;k } =0$ if and only if $k \in \N _N (0)$.)  Note that for any fixed $k \in \{ 1 , ... , N \}$, $\Om _{r;k} $ converges to $B_k$ in Fock space norm as $r \uparrow 1$ so that for any fixed $k \in \N _N$ and $r$ sufficiently close to $1$, $\Om _{r; k} \neq 0$. Further observe, by Lemma \ref{monrank}, that the set, $\{ \om _{r ; k } \}$ has dense linear span in the wandering space of $\ran{B(rL)} ^{-\| \cdot \|}$ so that the set,
$$ \{ \Om _{r ; k } \} _{k \in \N _N \sm \N _N (0)}, $$ is an orthonormal basis of wandering vectors for $\ran{B(rL)} ^{-\| \cdot \|}$. The wandering dimension, $N_r \leq N$, of $\ran{B(rL)} ^{-\| \cdot \| }$, is then the cardinality of the set $\N _N \sm \N _N (0)$.
We then define: 
$$ \wt{\Om} _r (L) := \left( M^L _{\Om _{r; 1}} , \cdots , M^L _{\Om _{r; N}} \right) : \hardy \otimes \C ^N \rightarrow \hardy, $$ and 
$$ \Om _r (L) := \left( \Om _{r; j } (L) \right) _{j \in \N _N \sm \N _N (0) }. $$ 
Observe that each non-zero $\Om _{r ; j } (L) = M^L _{\Om _{r ;j}}$ (for $j \in \N _N \sm \N _N (0)$), is an isometric, or inner left multiplier.  It follows that $\wt{\Om } _r (L) \in \mult \otimes \C ^{1\times N}$ is a partially isometric left multiplier and $\Om _r (L) \in \mult \otimes \C ^{1\times N_r}$ is the inner left multiplier obtained from $\wt{\Om } _r (L)$ by deleting any zero entries.  The inner-outer factorization of $B(rL)$ is then 
$$ B(rL) = \Om _r (L) F_r (L), $$ where $F_r (L) := \Om _r (L) ^* B(rL)$. If $N_r < N$, we add a tail end of $N-N_r$ zeroes to $\Om _r (L)$ to obtain
$$ \hat{\Om} _r (L) := \left( \Om _r (L) , 0, \cdots , 0 \right) \in \mult \otimes \C ^{1 \times N}. $$ If we set $\hat{F} _r (L) := \hat{\Om } _r (L) ^* B(rL)$ then note that we still have
$$ B(rL) = \hat{\Om} _r (L) \hat{F} _r (L), $$ where $\hat{F} _r (L)$ is simply $F_r (L)$ with $N - N_r$ rows of zeroes added to make it `square'. In particular, since $\Om _r (L)$ is an isometry, we have that $$ \ker{B(rL)} = \ker{F_r (L)} = \ker{\hat{F} _r (L)}. $$
Observe that there is a unitary basis permutation matrix $U _r \in \C ^{N \times N}$ so that 
$$ \wt{\Om} _r (L) = \hat{\Om} _r (L) (I _{\hardy} \otimes U _r). $$ If for example, $N=3$, $N_r =2$ and 
$$ \wt{\Om} _r  = \left( \Om _{r;1} , 0 , \Om _{r;3}  \right), $$
$$ \hat{\Om } _r = \left( \Om _{r;1}, \Om _{r;3} , 0 \right), $$ then, 
$$ U _r = \bpm 1 & 0 & 0\\ 0& 0& 1  \\0 & 1 & 0  \epm, $$ satisfies
$ \hat{\Om } _r (L) (I _{\hardy} \otimes U _r ) = \wt{\Om } _r (L)$. If we then define,
\ba \wt{F} _r (L) & := & \wt{\Om} _r (L) ^* B_r (L)  \nn \\
& = & (I_{\hardy} \otimes U _r ^*) \hat{\Om} _r (L) ^* B(rL) \nn \\
& = & (I_{\hardy} \otimes U _r ^* ) \hat{F} _r (L), \nn \ea we see that 
$$ \ker{B(rL)} = \ker{\hat{F} _r (L) } = \ker{\wt{F} _r (L)}. $$ 

We claim that $\wt{\Om} _r (L)$ converges in $WOT$ to $B(L)$. Indeed, each component $\Om _{r; k}$ converges to $B_k = B_k (L ) 1$ in Fock space norm, so that $\Om _{r; k} (Z) \rightarrow B_k (Z)$ in the NC unit ball. This pointwise convergence and the uniform boundedness of the $\Om _{r ; k} (L), B_k (L)$ (these are all isometries or $0$) implies $WOT$ convergence of $\Om _{r ;k } (L)$ to $B_k (L)$ for any fixed $k$ (see for example \cite[Lemma 2.5]{SSS}). To prove that the entire row $\wt{\Om } _r (L)$ converges in $WOT$ to $B(L)$, let $\mbf{h} \in \hardy \otimes \C ^N$ and $g \in \hardy$ be any fixed vectors. Given any $\eps >0$ choose $M \in \N$ sufficiently large so that if 
$$ \mbf{h} = \bpm h_1 \\ \vdots \\ h_N \epm, \quad \mbox{then}, \quad \sum _{M+1} ^N \| h_k \| ^2 _{\hardy}  < \eps. $$ Then, 
$$ \left| \ip{ (\wt{\Om } _r (L) - B(L) ) \mbf{h} }{g}_{\hardy} \right| \leq \eps \cdot \| g \|  + 
\left|  \sum _{k=1} ^M \ip{ \left( \Om _{r,k} (L) - B_k (L) \right) h_k }{g}_{\hardy} \right|, $$ 
which can be made arbitrarily small as $r\uparrow 1$ since each $\Om _{r;k} (L)$ converges in $WOT$ to $B_k (L)$. 

Since the adjoint map is $WOT-$continuous, it then follows that $\wt{\Om} _r (L) ^* \stackrel{WOT}{\rightarrow} B(L) ^*$. Finally, since $B(rL) \stackrel{SOT}{\rightarrow} B(L)$, we obtain that 
$$ \wt{F} _r (L) = \wt{\Om} _r (L) ^* B(rL) \stackrel{WOT}{\rightarrow} I_{\hardy} \otimes I_N. $$
(Here, note that if $A_k$ and  $B_k$ are uniformly bounded nets of operators on a Hilbert space so that $A_k \stackrel{WOT}{\rightarrow} A$ and $B_k \stackrel{SOT}{\rightarrow } B$, then $A_k B_k$ converges in the $WOT$ to $AB$.)  Since $\wt{F} _r (L)$ converges in $WOT$ to $I_{\hardy} \otimes I_N$, it follows that 
\ba \ipcN{\mbf{c}}{\wt{F} _r (0) \mbf{c} '} & = &  \ip{1 \otimes \mbf{c}}{\wt{F} _r (L) (1 \otimes \mbf{c} ')} \nn \\
& \rightarrow & \ipcN{\mbf{c}}{\mbf{c}'}, \nn \ea and this proves that $\wt{F} _r (0) \in \C ^{N \times N}$ converges in $WOT$ to $I_N$. As observed previously, $\ker{B(rL)} = \ker{\wt{F} _r (L)}$ so that $B(rL) h = 0$ implies $\wt{F} _r (L) h =0$.
However, if 
\ba  \wt{F} _r (L) & = & \sum _{\alpha} L^\alpha \otimes \wt{F} _{r, \alpha}, \quad \mbox{and} \nn \\  
 h  & = & \sum _\beta L^\beta 1 \otimes h_\beta \in \ker{B(rL)}, \nn \ea  then,  
$$ 0   =   \wt{F} _r (L) h = \sum _\ga L^\ga 1 \otimes \sum _{\alpha \cdot \beta = \ga} \wt{F} _{r, \alpha} h_\beta. $$  
All coefficients must vanish, so that in particular, 
$$ \wt{F} _r (0) h(0) = 0. $$    Now given any $\mbf{c} , \mbf{c} ' \in \C ^N$, we have that 
$$ e(rL) 1 \otimes \mbf{c} \in \ker{B(rL)} = \ker{F_r (L)}  = \ker{ \wt{F} _r (L)}. $$ 
It follows that $0 = \wt{F} _r (L) e(rL) 1 \otimes \mbf{c}$, and in particular, 
$$ 0 = \wt{F} _r (0) e(0) \mbf{c}. $$ Then, 
$$ 0 = \ipcN{\mbf{c} ' }{\wt{F} _r (0) e(0) \mbf{c} } \rightarrow \ipcN{\mbf{c}'}{e(0) \mbf{c}}. $$
Since $\mbf{c} , \mbf{c} ' \in \C ^N$ were arbitrary we conclude that $e(0) = 0$. By Lemma \ref{nzerovac}, we conclude that $e \equiv 0$ vanishes identically so that $B(rL)$ is injective for $0< r \leq 1$. 
\end{proof}

\begin{thm}[NC Blaschke-Singular-Outer factorization] \label{NCBSOthm}
Any $H \in H^p (\B ^d _\N )$, $p \in \{ 2 , \infty \}$, has a unique Blaschke-Singular-Outer factorization:
$$ H = B \cdot S \cdot F; \quad \quad \quad B,S \in \mult, \ F \in H^p (\B ^d _\N ),$$ where $B$ is an NC Blaschke inner, $\mr{Sing} (B) = \mr{Sing} (H)$, $S$ is NC singular inner and $F$ is an NC outer function. The factors are unique up to constants of unit modulus.
\end{thm}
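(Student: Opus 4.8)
The plan is to combine the inner--outer factorization of $H$ with the row--column factorization of Proposition~\ref{rowcolprop}, and then to extract the main point --- that the column factor reduces to a single scalar singular inner --- from the injectivity of $B(rL)$ established in Corollary~\ref{Binjcor}. First I would write the inner--outer factorization $H(L) = \Theta(L) F(L)$ with $\Theta \in \mult$ NC inner and $F \in H^p(\B^d_\N)$ NC outer \cite[Theorem 4.2]{Pop-multfact}, \cite[Corollary 2.2]{DP-inv}, so that by Corollary~\ref{innersing} we have $\mr{Sing}(H) = \mr{Sing}(\Theta)$. Applying Proposition~\ref{rowcolprop} to $\Theta$ gives $\Theta(L) = B(L) S(L)$ with $B(L) \in \mult \otimes \C^{1\times N}$ a row NC Blaschke inner satisfying $\ran{B(L)} = \scr{S}(\Theta)$ and $\mr{Sing}(B) = \mr{Sing}(\Theta) = \mr{Sing}(H)$, and with $S(L) \in \mult \otimes \C^{N\times 1}$ a column NC inner. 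Everything then hinges on proving $N=1$ and that $S$ is singular.

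For the reduction to $N=1$, I would use Corollary~\ref{idempotent}: for each $0<r<1$ the left multiplier $E_r(L) = S(rL) H_r(L) \in \mult \otimes \C^{N\times N}$ is idempotent with $\ran{I - E_r(L)} = \ker{E_r(L)} = \ker{B(rL)}$. Corollary~\ref{Binjcor} says $B(rL)$ is injective, so $\ker{E_r(L)} = \{0\}$, and an idempotent with trivial kernel is the identity: $E_r(L) = I_{\hardy \otimes \C^N}$. Since the identity operator is the left multiplier of the constant symbol and composition of left multipliers is pointwise multiplication of symbols, this reads $S(rZ) H_r(Z) = I$ for every $Z \in \B^d_n$. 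Evaluating at $Z = 0 \in \B^d_1$ yields $S(0) H_r(0) = I_N$, the product of a column in $\C^N$ with a row, hence a matrix of rank at most one; this can equal $I_N$ only when $N=1$. Thus $B = B_1$ and $S = S_1$ are scalar elements of $\mult$, with $B$ an NC Blaschke inner and $\mr{Sing}(B) = \mr{Sing}(H)$.

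With $N=1$ in hand, combining $E_r(L) = I$ with $H_r(L) S(rL) = I_\hardy$ from Corollary~\ref{NCsingular} shows that $S(rL)$ is invertible in $\mult$ with inverse $H_r(L)$, whence $S(rZ)$ is invertible for every $Z \in \B^d_{\aleph_0}$ and $0<r<1$. For any $W \in \B^d_{\aleph_0}$ with $\|W\| < 1$, picking $r \in (\|W\|,1)$ and setting $Z := W/r$ gives $S(W) = S(rZ)$ invertible, so $S$ is pointwise invertible throughout $\B^d_{\aleph_0}$; by the characterization of scalar singular inner functions as exactly the pointwise invertible ones, $S$ is NC singular inner. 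Collecting the factors produces the factorization $H = B\cdot S\cdot F$.

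Finally, for uniqueness up to unimodular scalars, suppose $H = B S F = B' S' F'$. Uniqueness (up to a unimodular constant) of the inner--outer factorization identifies the inner parts up to such a constant, so $\ran{B(L)} = \scr{S}(\Theta) = \ran{B'(L)}$; writing $B'(L) = B(L) U(L)$ with $U(L) := B(L)^* B'(L)$, the operator $U(L)$ is a unitary left multiplier, and I would show any such is a unimodular constant by noting that $U$ and $U^{-1} = U(L)^*$ are both inner, forcing $|U(0)|=1$, so that equality in $|U(0)| = |\ip{U(L)1}{1}| \le \|U(L)1\| = 1$ makes $U(L)1 = U(0)\cdot 1$ and hence $U$ constant. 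This pins down $B$, and then $S = B(L)^*\Theta(L)$ and $F$, each up to a unimodular constant. The genuinely hard analytic step, the injectivity of $B(rL)$, is already in hand through Corollary~\ref{Binjcor}; the remaining obstacle is the bookkeeping of the $N=1$ reduction, namely passing rigorously from the operator identity $E_r(L) = I$ to its constant symbol and reading off the rank of the evaluation at the origin.
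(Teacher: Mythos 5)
Your proposal is correct and follows essentially the same path as the paper's proof: inner--outer factorization plus Corollary \ref{innersing}, the row--column splitting of Proposition \ref{rowcolprop}, the idempotent $E_r(L)$ of Corollary \ref{idempotent} forced to be the identity by the injectivity of $B(rL)$ (Corollary \ref{Binjcor}), and then pointwise invertibility of $S$ yielding singularity. The only variations are sound and minor: you get $N=1$ from the rank-one identity $S(0)H_r(0)=I_N$ at the origin, where the paper instead sums the diagonal relations $H_{k}(rL)S_k(rL)=I_{\hardy}$ to obtain $N\cdot I_{\hardy}=I_{\hardy}$, and you spell out the uniqueness argument (a unitary scalar left multiplier is a unimodular constant) that the paper asserts but does not write out.
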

\begin{proof}
By the NC inner-outer factorization, any $H \in H^p (\B ^d _\N )$, $p=2$ or $p=\infty$, factors as $H = \Theta \cdot F$ for an NC inner $\Theta \in \mult $ and an NC outer $F \in H^p (\B ^d _\N )$ \cite[Corollary 2.2, Corollary 2.3]{DP-inv}, \cite[Theorem 2.1]{AriasPopescu}. By Proposition \ref{rowcolprop} and Corollary \ref{NCsingular}, 
$$ \Theta = B \cdot S = \bsm B_1 ,& \cdots , &  B_N \esm \bsm S_1 \\ \vdots \\ S_N \esm, $$  for a Blaschke row-inner $B$ and a column-inner $S$, both of length $N$. In Corollary \ref{idempotent}, we constructed an NC idempotent, $e$, $e (rL) \in \mult \otimes \C ^{N \times N}$ for $r \in [0, 1)$, so that $\ker{B(rL)} = \ran{e (rL)}$. Moreover, if $E (rL) = I _{\hardy } \otimes I_N - e (rL)$, then $E(rL) = S(rL) H_r (L)$, is an NC idempotent and $H_r (L) S (rL) = I_{\hardy}$, so that $H _r (L) \in \mult \otimes \C ^{1 \times N}$ is a left inverse for $S(rL)$. (Also recall that we can write $H_r (L) = H(rL)$ by Corollary \ref{idempotent} and  Remark \ref{rdepend}.) Corollary \ref{Binjcor} shows that $e \equiv 0$ so that $ S(rL) H (r L) = E (rL) = I_{\hardy} \otimes I_N$ for any fixed $0<r<1$. This means that
the diagonal components obey:
$$ S_k (rL) H_{k} (r L) = I_{\hardy} = H_{k} (rL) S_k (rL). $$ 
On the other hand, in Corollary \ref{NCsingular} we proved that $H (r L)$ is a left inverse for $S(rL)$ so that 
$$ I_{\hardy} = H (r L) S(rL) = \sum _{k=1} ^N H_{k} (r L) S_k (rL) = N \cdot I_{\hardy}. $$
This proves $N=1$, and then $S(rL)$ is an invertible left scalar multiplier with inverse $H (r L)$. In particular, the NC variety of $S$ is the empty set so that $\scr{S} (S) = \hardy$ and $S$ is an NC singular inner function.
\end{proof}
When $d=1$, we recover the classical Blaschke-Singular-Outer factorization with a new operator-theoretic proof:
\begin{cor} \label{dequals1}
Given any $h \in H^p (\D )$, $p \in \{ 2 , \infty \}$, the NC Blaschke-Singular-Outer factorization of $h$ and the classical Blaschke-Singular-Outer factorization of $h$ coincide.  That is, if $h = b \cdot s \cdot f$ is the classical Blaschke-Singular-Outer factorization of $h$, then the range of $b(M_z) = M_b$ is the singularity space of $h$.
\end{cor}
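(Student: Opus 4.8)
The plan is to reduce the statement to the single identity $\scr{S}(h) = \ran{M_b} = bH^2$, where $\scr{S}(h)$ is the NC singularity space of Definition \ref{leftsingdef} and $b$ is the classical Blaschke product; everything else will follow from uniqueness. First I would record that when $d=1$ the algebra $\mult$ is exactly the commutative algebra $H^\infty(\D)$, and the left and right shifts both coincide with $M_z$. Consequently an NC inner left multiplier $M_f$ (an isometry on $H^2(\D)$) is precisely a classical inner function, while an NC outer ($R$-cyclic, i.e.\ $M_z$-cyclic) vector is precisely a classical outer function. Thus the NC inner--outer factorization $h = \Theta\cdot F$ coincides with the classical one, so $F$ equals the classical outer factor $f$ and $\Theta$ equals the classical inner factor $b\cdot s$, each up to a unimodular constant.

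By Theorem \ref{NCBSOthm} (together with Proposition \ref{rowcolprop} and Corollary \ref{Binjcor}, which force $N=1$) the NC theory further splits $\Theta = B\cdot S$ into scalar NC inner factors with $S$ NC singular and $\ran{M_B} = \scr{S}(\Theta) = \scr{S}(h)$. Hence once I show $\scr{S}(h) = bH^2$, I will have $\ran{M_B} = \ran{M_b}$; two inner functions with the same range in $H^2(\D)$ agree up to a unimodular scalar, so $B = \zeta b$. Cancelling $b$ in $BSF = bsf$ and invoking uniqueness of the classical singular--outer factorization of the zero-free part then gives $S = \zeta' s$ and $F = \zeta'' f$, which is exactly the asserted coincidence of the two factorizations.

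To compute $\scr{S}(h)$ I would first argue $\mr{Sing}(h) = \mr{Sing}(b)$. For any strict contraction $Z \in \B^1_m$ the spectrum $\sigma(Z)$ is a compact subset of $\D$, and since the classical singular inner $s$ and the classical outer $f$ are zero-free on $\D$, the holomorphic functional calculus shows $s(Z)$ and $f(Z)$ are invertible. Therefore $y^* h(Z) = y^* b(Z) s(Z) f(Z) = 0$ holds if and only if $y^* b(Z) = 0$, giving $\mr{Sing}(h) = \mr{Sing}(b)$ and hence $\scr{S}(h) = \scr{S}(b)$. The inclusion $bH^2 = \ran{M_b} \subseteq \scr{S}(b)$ is the general fact that $\ran{H(L)}\subseteq \scr{S}(H)$, so it remains to prove $\scr{S}(b)\subseteq bH^2$.

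This last inclusion is the crux, and the mechanism is to feed Jordan blocks into the NC variety so as to detect zeros of $b$ with their multiplicities. Given $g \in \scr{S}(b)$ and a zero $\lambda \in \D$ of $b$ of order $m$, consider $Z_\lambda := \lambda I_m + \eps N$, where $N$ is the $m\times m$ nilpotent upper-shift and $0<\eps < 1-|\lambda|$, so that $Z_\lambda \in \B^1_m$. For any $G \in \mr{Hol}(\D)$ the evaluation collapses to the finite Taylor sum $G(Z_\lambda) = \sum_{k=0}^{m-1}\frac{G^{(k)}(\lambda)}{k!}\eps^k N^k$; applied to $G = b$ this gives $b(Z_\lambda)=0$ (since $b^{(k)}(\lambda)=0$ for $k<m$), so $(Z_\lambda, y)\in \mr{Sing}(b)$ for every $y\in\C^m$. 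Membership $g\in\scr{S}(b)$ then forces $g(Z_\lambda)=0$, and the linear independence of $N^0,\dots,N^{m-1}$ yields $g^{(k)}(\lambda)=0$ for $0\le k<m$. Letting $\lambda$ range over all zeros of $b$ shows $g/b$ is holomorphic on $\D$; since $|b|=1$ a.e.\ on $\T$ and $g\in H^2$, the quotient $g/b$ lies in $H^2$, i.e.\ $g\in bH^2$. I expect this Jordan-block computation --- converting the single NC condition $y^*g(Z)\equiv 0$ on matrix points into the classical order-of-vanishing conditions --- to be the main obstacle, as it is the step where the genuinely matricial content of the NC variety must be matched against the scalar divisibility that defines classical Blaschke factors.
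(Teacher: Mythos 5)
Your proposal is correct and follows essentially the same route as the paper's proof: you invoke invertibility of $s(Z)$ and $f(Z)$ via the holomorphic functional calculus (the paper's spectral mapping argument) to identify $\mr{Sing}(h)$ with $\mr{Sing}(b)$, and you use upper-triangular Jordan-type points $\lambda I_m + \eps N$ to convert the NC vanishing condition into classical order-of-vanishing, exactly as the paper does with its matrix $W$. If anything, your multiplicity bookkeeping (an $m\times m$ block with $\eps < 1-|\lambda|$ detecting a zero of order $m$) is slightly cleaner than the paper's, which uses an $(n+1)\times(n+1)$ block and the condition $\eps < |w|$.
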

\begin{proof}
As observed in the introduction, if $h =b \cdot s \cdot f$ is the classical Blaschke-Singular-Outer factorization of $h \in H^p (\D)$, $p \in \{ 2 , \infty \}$, then 
$$ \ran{M_b} = \scr{D} (h) := \left\{ f \in H^2 \left| \ \frac{f}{h} \in \mr{Hol} (\D ) \right. \right\}, $$ is the set of all $H^2$ functions which are divisible by $h$. On the other hand, if 
$h = B \cdot S \cdot F$ is the NC Blaschke-Singular-Outer factorization of $h$ obtained by setting $d=1$ in Theorem \ref{NCBSOthm} above, then it is clear from \cite[Corollary 2.2]{DP-inv} that $F=f$, and it remains to show that 
$$ \ran{M_B} = \scr{S} (h) = \{ g \in H^2 | \ y^* g(Z) =0 \ \forall (Z,y) \in \mr{Sing} (h) \}, $$ coincides with $\ran{M_b} = \scr{D} (h)$. Clearly $g \in \scr{D} (h)$ if and only if every zero of $h$ is a zero of $g$ of greater or equal multiplicity. If $w \in \D$ is a zero of $h$ of order $n$, consider
$$ W := \bsm w & \eps &   &  \\  & \ddots & \ddots  & \\ & &  \esm \in \C ^{(n+1) \times (n+1)}, $$ where we choose $0< \eps < |w |$ so that $W$ is a strict contraction. The image of $W$ under $h$ is
$$ h(W) = \bsm h(w) & \eps h'(w) & \eps ^2 \frac{h' (w)}{2!} & \cdots & \eps ^n \frac{h ^{(n)} (w)}{n!} \\ & h(w) & \ddots  &  & \\ & & \ddots & & \\ & & & & \\ 
& & &  &  \eps h ' (w) \\
& & & &  h(w) \esm, $$ which vanishes identically as $h$ has a zero of order $n$ at $w \in \D$. It follows that for any $y \in \C ^{n+1}$, $(W, y) \in \mr{Sing} (h)$, so that any $g \in \scr{S} (h)$ is necessarily such that $g(W) \equiv 0$. This is equivalent to $w$ being a zero of $g \in H^2$ of order at least $n$, and we conclude that $\scr{S} (h) \subseteq \scr{D} (h)$ so that $\ran{M_B} \subseteq \ran{M_b}$. Conversely, if $(Z,y) \in \mr{Sing} (h)$ then, 
\ba 0 & = & y^* h(Z) \nn \\
& = & y^* b(Z) s(Z) f(Z), \nn \ea where $s(Z) f(Z)$ is invertible, by spectral mapping, since $s,f$ are non-vanishing in $\D$. This proves that $y^* b(Z) =0$ for any $(Z,y ) \in \mr{Sing} (h)$ so that $b = M_b 1 \in \scr{S} (h) = \ran{M_B}$, $b = M_B g = B g$, for some $g \in H^2$. If $p \in \C [ z ]$ is any analytic polynomial, then 
$$ M_b p = M_p b = M_p M_B g = M_B p g \in \ran{M_B}. $$  Since $M_b \C [ z ]$ is dense in $\ran{M_b}$, we conclude that $\ran{M_b} \subseteq \ran{M_B}$ so that $M_b, M_B$ have the same range. Since $b,B$ are inner functions in $\D$ with the same range, they are equal up to a unimodular constant. Without loss of generality $B=b$ and $F=f$ so that $S =s$ as well. 
\end{proof}

\subsection{The infinite level}

A natural question is whether it is really necessary to include the infinite level, $\mr{Sing} _\infty (H)$ in our definition of NC variety. Our current operator-theoretic proof of the NC Blaschke-Singular Outer factorization theorem seems to rely on this. Namely, one can define the \emph{finite NC variety}:
$$ \mr{Sing} _\N (H) := \bigsqcup _{n \in \N } \mr{Sing} _n ( H), $$ and the \emph{finite singularity space}:
$$ \scr{S} _\N (H) := \{ h \in \hardy | \ y^* h(Z) = 0 \ \forall (Z,y) \in \mr{Sing} _\N (H) \}, $$ and this is again a closed $R-$invariant subspace. Applying similar factorization arguments to those in the proof of Proposition \ref{rowcolprop} to an NC inner $H = \Theta \in \mult$ again yields: 
$$ \Theta (L) = B ' (L) S ' (L) = \bsm B_1 ' (L) , &  \cdots , & B_N ' (L) \esm \bsm S' _1 (L) \\ \vdots \\ S' _N (L) \esm, $$ for some `finite level' NC Blaschke inner row, $B'$, \emph{i.e.} $\ran{B' (L)} = \scr{S} _\N (\Theta )$, and a `finite level' NC inner column, $S'$, where $N \in \N \cup \{ \infty \}$. If $\Theta (L) = B(L) S(L)$ is the `infinite level' (scalar) NC Blaschke-Singular factorization of $\Theta$ given by Theorem \ref{NCBSOthm}, it could be that $\ran{B(L)}  =\ran{B' (L)}$, so that $B' (L) = B(L)$ up to a unimodular constant, and $B' (L) $ is scalar.  If this were the case, unrestrictedly, then there would be no need to include the infinite level in our definition of left NC variety. While we currently do not know whether or not this is the case, we can show that if $p \in \fp$ is any NC polynomial with NC Blaschke-Singular-Outer factorization $p = BSF$, then $B = B'$ is determined by the finite NC variety of $p$.

\begin{prop}
If $p \in \fp$, then any $(Z,y) \in \mr{Sing} _\infty (p)$ can be approximated by finite dimensional $(Z^{(k)}, y^{(k)}) \in \mr{Sing} _{n_k} (p)$, $n_k < \infty$, in the sense that 
$$ K \{ Z , y , v \} = wk-\lim _{k \rightarrow \infty } K \{ Z^{(k)} , y^{(k)} , v \}. $$
In particular,
$$ \scr{S} (p) = \scr{S} _\N (p). $$ 
\end{prop}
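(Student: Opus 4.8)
The plan is to deduce the identity $\scr{S}(p) = \scr{S}_\N(p)$ from the weak--approximation statement, and to prove that statement by compressing an infinite-dimensional singular point $(Z,y)$ to finite-dimensional subspaces chosen to be co-invariant to the relevant order. Since $\mr{Sing}_\N(p) \subseteq \mr{Sing}(p)$, the inclusion $\scr{S}(p) \subseteq \scr{S}_\N(p)$ is automatic, and equality is equivalent to $\scr{S}(p)^\perp \subseteq \scr{S}_\N(p)^\perp$. Recall that $\scr{S}(p)^\perp = \bigvee_{(Z,y) \in \mr{Sing}(p)} K\{Z,y,v\}$ is generated by the NC Szeg\"{o} kernels at all singular points, and that every \emph{finite}-level kernel $K\{W,x,v\}$, $(W,x)\in\mr{Sing}_\N(p)$, already lies in the closed (hence weakly closed) subspace $\scr{S}_\N(p)^\perp$. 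Thus it suffices to show that each $K\{Z,y,v\}$ with $(Z,y)\in\mr{Sing}_\infty(p)$ is a weak limit of finite-level kernels: this forces all generators of $\scr{S}(p)^\perp$ into $\scr{S}_\N(p)^\perp$, giving the reverse inclusion and the desired equality.

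First I would exploit that $p$ is a \emph{polynomial}: set $m := \deg p$ and let $J \subset \{1,2,\dots\}$ be the finite set of letters occurring in $p$, so that $p(Z)$ involves only the finitely many coordinates $Z_j$, $j\in J$, and words of length at most $m$. Define the finite-dimensional subspace $\mc E_0 := \bigvee \{\, v,\ (Z^\ga)^* y \ :\ \ga \in \F^d,\ |\ga| \le m-1,\ \ga \text{ in letters from } J \,\}$, and let $\mc E_k \supseteq \mc E_0$ be any increasing chain of finite-dimensional subspaces of $\C^\infty$ with dense union, with projections $P_k \uparrow I$ strongly. Put $Z^{(k)} := P_k Z P_k$, compressing every coordinate; this is a strict row contraction on $\mc E_k \cong \C^{n_k}$ because $\sum_j Z^{(k)}_j (Z^{(k)}_j)^* = P_k \big(\sum_j Z_j P_k Z_j^*\big) P_k \le \|ZZ^*\|\, P_k < P_k$, so $Z^{(k)} \in \B^d_{n_k}$ with $\|Z^{(k)}\| \le \|Z\| =: \rho < 1$. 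The crucial point is that, because $\mc E_0$ contains every partial product $(Z^\beta)^* y$ with $|\beta|\le m-1$ in letters of $J$, all the interior projections in $(Z^{(k)\a})^* y = P_k Z_{i_l}^* P_k \cdots P_k Z_{i_1}^* y$ act as the identity for each word $\a = i_1\cdots i_l$ appearing in $p$; hence $p(Z^{(k)})^* y = P_k\, p(Z)^* y = 0$, so that $(Z^{(k)}, y) \in \mr{Sing}_{n_k}(p)$ \emph{exactly}. Since $v, y \in \mc E_k$, we may keep $y^{(k)} = y$ and $v$ unchanged.

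It remains to verify $K\{Z^{(k)}, y, v\} \stackrel{w}{\to} K\{Z,y,v\}$. As the monomials $L^\a 1$ are an orthonormal basis of $\hardy$, weak convergence is equivalent to a uniform Fock-norm bound plus convergence of every Taylor coefficient. The uniform bound follows exactly as in the proof of Theorem \ref{InvOut}: $\|K\{Z^{(k)}, y, v\}\|^2 = \ipcn{y}{K(Z^{(k)},Z^{(k)})[vv^*]\, y} \le \frac{\|y\|^2\|v\|^2}{1-\rho^2}$, using $\|Z^{(k)}\| \le \rho$ and complete positivity of the adjunction maps. For the coefficients, Lemma \ref{kerform} identifies the $\a$-coefficient of $K\{Z^{(k)},y,v\}$ as $\ipcn{(Z^{(k)})^\a v}{y}$; since $P_k \to I$ strongly and each word is a finite product of uniformly bounded operators, $(Z^{(k)})^\a v = (P_k Z P_k)^\a v \to Z^\a v$ strongly, whence $\ipcn{(Z^{(k)})^\a v}{y} \to \ipcn{Z^\a v}{y}$, the $\a$-coefficient of $K\{Z,y,v\}$. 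This gives the weak convergence, and by the first paragraph the equality $\scr{S}(p) = \scr{S}_\N(p)$. The genuinely delicate point, and the main obstacle, is that an \emph{arbitrary} compression does not respect the relation $p(Z)^* y = 0$; the construction works precisely because $p$ has finite degree and uses finitely many letters, so that a finite-dimensional subspace can absorb all partial products $(Z^\ga)^* y$ needed to make the compression co-invariant to the relevant order, even when $d = \infty$.
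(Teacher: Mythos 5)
Your proof is correct and follows essentially the same route as the paper: compress $Z$ to finite-dimensional subspaces containing the backward iterates $(Z^\gamma)^* y$ up to the degree of $p$ (so that the compression remains a singular point exactly), then deduce weak convergence of the Szeg\"{o} kernels from the uniform norm bound $\|Z^{(k)}\| \leq \|Z\| < 1$ together with coefficientwise convergence via Lemma \ref{kerform}. The only difference is a welcome refinement: you restrict to the finitely many letters occurring in $p$ and adjoin $v$, making finite-dimensionality explicit even when $d = \infty$ and allowing an arbitrary exhaustion with $P_k \uparrow I$, whereas the paper uses the specific subspaces $\K (n) = \bigvee_{|\alpha| \leq n} (Z^\alpha)^* y$, for which the Taylor coefficients actually stabilize rather than merely converge.
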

\begin{proof}
Suppose that $m$ is the homogeneous degree of $p$, and that $(Z,y) \in \mr{Sing} _\infty (p)$. Define the subspace 
$$ \mc{K} := \bigvee _{|\alpha | \leq m} (Z^\alpha ) ^* y \subseteq \C ^\infty := \ell ^2 (\N ), $$ where we assume $y \in \C ^\infty$. Define the row contaction,
$$ X _j := P_{\K} Z _j | _{\K}, $$ the compression of $Z$ to the finite dimensional subspace $\K$, and set $x := y \in \K$. We claim that $(X  , x ) \in \mr{Sing} _N (p)$ where $N := \mr{dim} (\K )$. Indeed, this is easy to verify for $m =1$. If $m >1$ then observe that 
\ba X_j   ^* X_k  ^* x  & = & P_{\mc{K}} Z_j ^* P _\K Z_k ^* x  \nn \\
& = & P_\K Z_j ^* P_\K Z_k ^* y \nn \\
& = & P_\K Z_j ^* Z_k ^* y, \nn \ea and similarly, for any $|\alpha | \leq m$, 
$$ (X ^* ) ^\alpha x = P_\K (Z ^* ) ^\alpha x = P_\K (Z^* ) ^\alpha y. $$ 
It follows that 
$$ p (X) ^* x = P_\K p (Z) ^* y =0, $$ so that $(X,x) \in \mr{Sing} _N (p)$.
For any $n \geq m $ let 
$$ \K (n) := \bigvee _{|\alpha | \leq n} (Z^\alpha ) ^* y, $$ and set $X(n) _j := P_n Z_j | _{\K (n)}, $ where $P _n : = P _{\K (n)}$. This produces a sequence of finite-dimensional singularity points
$$ (X(n) , y ) \in \mr{Sing} _{N_n} (p), $$ so that 
$$ K \{ X(n) , y , v \} \stackrel{w}{\rightarrow} K \{ Z , y , v \}.$$ Indeed, by Lemma \ref{kerform}, 
$$ K \{ X(n) , y ,v \}  =  \sum _{\alpha } \ip{X(n) ^\alpha v}{y} L^\alpha 1, $$
where, for any fixed $|\alpha | < n$,
\ba \ip{X(n) ^\alpha v}{y} & = & \ip{v}{ \left( (P_n Z P_n ) ^\alpha \right) ^* y} \nn \\
& = & \ip{Z^\alpha P_n v}{y}. \nn \ea For any fixed $\alpha \in \F ^d$, $\ip{Z^\alpha P_n v}{y} \rightarrow \ip{Z^\alpha v}{y}, $ so that 
$$ \ip{X(n) ^\alpha v}{y} \stackrel{\longrightarrow}{\mbox{\tiny $n \rightarrow \infty$}} \ip{Z^\alpha v}{y}. $$

Since each $\| X (n) \| \leq \| Z \| <1$, the NC kernel vectors $K \{ X(n), y , v \}$ are uniformly bounded in Fock space norm.  This, combined with the convergence of their coefficients implies that $K \{ X(n) , y, v \}$ converges weakly to $K\{ Z ,y , v \}$.
In particular,
$$ \scr{S} _\N (p) = \bigvee _{(Z,y) \in \mr{Sing} _\N (p)} K \{Z,y,v \} = \bigvee _{(Z,y) \in \mr{Sing}  (p)} K \{Z,y,v \} = \scr{S} (p). $$ 
\end{proof}

Another related and perhaps easier question is whether there exists an $H \in \mult$, such that $\mr{Sing}_{\N}(H) = \emptyset$, but $\mr{Sing}(H) \neq \emptyset$? A positive answer to this question, of course, implies that one cannot dispense with the infinite level. However, a negative answer does not tell us to what extent $\mr{Sing}(H)$ is determined by $\mr{Sing}_{\N}(H)$.

\section{NC Blaschke and Singular Examples} \label{sec:examples}

\subsection{Homogeneous NC polynomials and NC Blaschke inners}

In this example we will show that every homogeneous free polynomial $p \in \fp$ is a constant multiple of a Blaschke inner. Let $p \in \mult$ be a homogeneous polynomial. Since $p(L) = M^L _p$ is a constant times an isometry, we may assume without loss of generality that $p(L)$ is an isometry, \emph{i.e.} $p$ is inner. It is immediate that $\mr{Sing}(p)$ is homogeneous in the first coordinate, \emph{i.e.}, if $(Z,y) \in \mr{Sing}(p)$, then for every $\lambda \in \overline{\D}$, $(\lambda Z, y) \in \mr{Sing}(p)$. 
Let $f \in \scr{S}(p)$ and $(Z,y) \in \mr{Sing}(p)$. Write $f = \sum_{n=0}^{\infty} f_n$, the Taylor-Taylor series of $f$ at $0 \in \B ^d _1$, where $f_n$ are the homogeneous components. Then we immediately have
\[
0 = y^* \int_0^{2 \pi} e^{-i n \theta} f( e^{i \theta} Z) \frac{d \theta}{2 \pi} = y^* f_n(Z).
\]
Hence for every $n \in \N$, $f_n \in \scr{S}(p)$. By the Bergman Nullstelensatz \cite[Theorem 6.3]{HelMcC04} we have that $f_n = p g$, for some homogeneous $g$. This proves that $f$ is in the range of $p (L) $ and we conclude that $\scr{S}(p) = \ran{p(L)}$ so that $p$ is Blaschke, by definition.

\subsection{The Weyl algebra relation}
For any $w \in \D$, consider the M\"{o}bius transformation:
$$ \mu _w (z) := \frac{z-w}{1-\ov{w} z}. $$ 

\begin{lemma}
If $V \in \mc{L} (\mc{H})$ is an isometry then $\mu _w (V) $ is also an isometry.
\label{Fshift}
\end{lemma}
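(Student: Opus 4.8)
The plan is to interpret $\mu _w (V)$ as the operator $(V - wI)(I - \ov{w}V)^{-1}$ and to verify directly that $\mu _w(V)^*\mu _w(V) = I$. First I would check that this expression is well-defined: since $V$ is an isometry we have $\|V\| \leq 1$, so $\|\ov{w}V\| = |w|\,\|V\| < 1$ and $I - \ov{w}V$ is invertible via a Neumann series. (Equivalently, $\mu _w$ is holomorphic on a neighborhood of the closed disk, its only pole $1/\ov{w}$ lying outside it, so $\mu _w(V)$ is defined by the holomorphic functional calculus.)

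Taking adjoints gives $\mu _w(V)^* = (I - wV^*)^{-1}(V^* - \ov{w}I)$, hence
\[
\mu _w(V)^*\mu _w(V) = (I - wV^*)^{-1}\,(V^* - \ov{w}I)(V - wI)\,(I - \ov{w}V)^{-1}.
\]
The key step is then to establish the central identity $(V^* - \ov{w}I)(V - wI) = (I - wV^*)(I - \ov{w}V)$. Expanding both sides and using \emph{only} the isometry relation $V^*V = I$, each side equals $(1 + |w|^2)I - wV^* - \ov{w}V$; note that the computation never invokes $VV^* = I$, so the result holds for a genuine (possibly non-unitary) isometry. Substituting this identity cancels the two outer invertible factors and yields $\mu _w(V)^*\mu _w(V) = I$, so $\mu _w(V)$ is an isometry.

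There is essentially no serious obstacle here; the only points requiring minor care are the invertibility of $I - \ov{w}V$ and the fact that $V$ and $V^*$ need not commute, which is why I carry out the expansion on the central product $(V^* - \ov{w}I)(V - wI)$ rather than manipulating $\mu _w(V)$ as though $V$ were normal. The clean cancellation rests on the algebraic coincidence $A^*A = B^*B$ for $A = V - wI$ and $B = I - \ov{w}V$, which is precisely the content of the central identity.
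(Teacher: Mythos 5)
Your proof is correct and follows essentially the same route as the paper: both verify $\mu_w(V)^*\mu_w(V)=I$ by expanding the central product and using only $V^*V=I$ to obtain the identity $(V^*-\ov{w}I)(V-wI)=(I-wV^*)(I-\ov{w}V)$, after which the outer inverses cancel. Your added remarks on the invertibility of $I-\ov{w}V$ and on not assuming $VV^*=I$ are sound but do not change the argument.
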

\begin{proof}
Consider:
$$ \mu _w (V) ^* \mu _w (V)  =  (I - w V ^* ) ^{-1} (V^* - \ov{w} ) (V - w) ( I - \ov{w} V ) ^{-1}. $$ Expand the middle term:
\ba 
(V^* - \ov{w} ) (V - w) & = & I - \ov{w} V - w V^* + |w| ^2 \nn \\
& = & (I - w V^* ) (I - \ov{w} V), \nn \ea and this proves the claim.
\end{proof}

In the classical Hardy space literature, any M\"{o}bius transformation composed with a contractive analytic function in the disk is sometimes called a \emph{Frostman shift} \cite{Frostman,Frostman2}, see also \cite[Section 2.6]{GR-model}. 

\begin{cor}{ (NC inner Frostman shifts)}
If $\Theta \in \mult$ is inner, then for any $w \in \D$,
$$ \Theta _w := \mu _w (\Theta ) = (I - \ov{w} \Theta ) ^{-1} (\Theta - wI ), $$ is also inner.
\end{cor}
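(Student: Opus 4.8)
The plan is to reduce the statement entirely to the operator-level result already proved in Lemma \ref{Fshift}, by transporting the M\"{o}bius calculus through the algebra isomorphism $F \mapsto F(L)$. First I would record that, since $\Theta$ is inner, $\Theta (L) = M^L _\Theta$ is an isometry on the nonzero space $\hardy$, so its multiplier norm (which coincides with the operator norm on $\mult$) is $\| \Theta \| _{\mult} = 1$. Because $w \in \D$, i.e. $|w| < 1$, we then have $\| \ov{w} \Theta \| _{\mult} = |w| < 1$, so the Neumann series $\sum _{n \geq 0} (\ov{w} \Theta ) ^n$ converges in the Banach algebra $\mult$ and produces an element $(I - \ov{w} \Theta ) ^{-1} \in \mult$. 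Consequently $\Theta _w = (I - \ov{w} \Theta ) ^{-1} (\Theta - wI)$ is a well-defined element of $\mult$; in particular it is a genuine bounded left multiplier, and no inverse-closure subtlety arises since the inverse is manufactured directly inside $\mult$.

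Next I would invoke that the map $F \mapsto F(L)$ is a (completely isometric) algebra isomorphism of $\mult$ onto $\mr{Alg} (I, L) ^{-weak-*}$, as recalled in Section \ref{sec:prelim}. Since this map is multiplicative and carries the Neumann-series inverse of $I - \ov{w}\Theta$ to the inverse of $I - \ov{w}\Theta(L)$, applying it to the expression for $\Theta _w$ yields
\[
\Theta _w (L) = (I - \ov{w} \Theta (L)) ^{-1} (\Theta (L) - wI) = \mu _w (\Theta (L)).
\]
Thus the left multiplier $\Theta _w (L)$ is literally the M\"{o}bius transform $\mu _w$ applied to the operator $V := \Theta (L)$, which is an isometry. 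Lemma \ref{Fshift} with this choice of $V$ then shows that $\mu _w (\Theta (L)) = \Theta _w (L)$ is an isometry, which is exactly the assertion that $\Theta _w$ is inner.

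The only step I expect to require genuine care is the identification $\Theta _w (L) = \mu _w (\Theta (L))$, namely that passing between the NC function $\Theta$ and its multiplication operator $\Theta (L)$ intertwines the M\"{o}bius calculus. Once one commits to the fact that $\Theta \mapsto \Theta (L)$ is an algebra isomorphism, so that both products and the geometric-series inverse are preserved, this identity is automatic, and the remainder is just the bookkeeping of the preceding lemma. Everything else --- convergence of the Neumann series, membership in $\mult$, and the isometry computation --- is routine.
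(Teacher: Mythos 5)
Your proof is correct and follows essentially the same route as the paper: the corollary is intended as an immediate application of Lemma \ref{Fshift} with $V = \Theta(L)$, which is exactly what you do. The extra care you take --- verifying via the Neumann series that $(I-\ov{w}\Theta)^{-1}$ lives in $\mult$ and that the completely isometric isomorphism $F \mapsto F(L)$ intertwines the M\"{o}bius calculus, so that $\Theta_w(L) = \mu_w(\Theta(L))$ --- is precisely the routine bookkeeping the paper leaves implicit, and it is handled correctly.
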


The main result of this subsection will be:
\begin{thm} \label{FSBlaschke}
Let $V(Z)$ be any inner NC homogeneous polynomial. For any $w \in \D$, the NC Frostman shift $V_w (Z) = \mu _w (V(Z))$ is Blaschke. 
\end{thm}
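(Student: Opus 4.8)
The plan is to reduce the Blaschke property of $V_w$ to an eigenspace identity for the coisometry $V(L)^*$, and then to settle that identity by lifting the already established homogeneous ($w=0$) case through the resolvent $(I-\overline{w}V(L))^{-1}$.

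First I would isolate the algebraic structure of the Frostman shift. Since $V(L)$ is an isometry and $|w|<1$, the Neumann series shows that $I-\overline{w}V(L)$ is invertible in $\mult$, and
$$ V_w(L)=(I-\overline{w}V(L))^{-1}(V(L)-wI), $$
with the two factors commuting. The factor $V(L)-wI$ is bounded below, as $\|(V(L)-wI)h\|\ge(1-|w|)\|h\|$, hence has closed range, and the pointwise version of the same eigenvector computation shows $\mr{Sing}(V-w)=\mr{Sing}(V_w)=\{(Z,y):V(Z)^*y=\overline{w}y\}$, so that $\scr{S}(V_w)=\scr{S}(V-w)$. Moreover $\scr{S}(V_w)$ is invariant under $V(L)$: if $y^*h(Z)\equiv 0$ whenever $y^*V(Z)=wy^*$, then $y^*V(Z)h(Z)=wy^*h(Z)\equiv 0$. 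Consequently $\scr{S}(V_w)$ is invariant under $(I-\overline{w}V(L))^{\pm 1}$, and since $\ran{V_w(L)}=(I-\overline{w}V(L))^{-1}\ran{V(L)-wI}$, the goal $\scr{S}(V_w)=\ran{V_w(L)}$ is equivalent to $\scr{S}(V_w)=\ran{V(L)-wI}$.

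Passing to orthogonal complements turns this into an eigenspace statement. We have $\ran{V(L)-wI}^{\perp}=\ker{V(L)^*-\overline{w}I}=:\mc{N}_w$, the $\overline{w}$-eigenspace of $V(L)^*$; while the adjoint--multiplier identity \eqref{NCadjmult} gives $V(L)^*K\{Z,y,v\}=K\{Z,V(Z)^*y,v\}=\overline{w}K\{Z,y,v\}$ for every $(Z,y)\in\mr{Sing}(V_w)$, so that
$$ \scr{S}(V_w)^{\perp}=\bigvee_{(Z,y)\in\mr{Sing}(V_w)}K\{Z,y,v\}=:\mc{K}_w\subseteq\mc{N}_w. $$
Thus the theorem is equivalent to the reverse inclusion $\mc{N}_w\subseteq\mc{K}_w$: the entire $\overline{w}$-eigenspace of $V(L)^*$ must be spanned by NC Szeg\"o kernels supported on the singularity locus.

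To compute $\mc{N}_w$ I would use the Wold decomposition. As $V$ is a non-constant homogeneous inner polynomial (the degree-zero case being a unimodular constant, trivially Blaschke), $V(L)$ is a pure isometry with wandering space $\mc{W}=\ker{V(L)^*}=\mc{N}_0$, and solving $V(L)^*g=\overline{w}g$ in Wold coordinates $g=\sum_k V(L)^k\xi_k$ forces $\xi_k=\overline{w}^{\,k}\xi_0$, whence $\mc{N}_w=(I-\overline{w}V(L))^{-1}\mc{N}_0$. By the homogeneous case established above, $V$ is Blaschke, so $\mc{N}_0=\mc{K}_0=\bigvee_{(Z_0,y_0)\in\mr{Sing}(V)}K\{Z_0,y_0,v_0\}$. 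It therefore suffices to prove that, for each $(Z_0,y_0)\in\mr{Sing}(V)$,
$$ (I-\overline{w}V(L))^{-1}K\{Z_0,y_0,v_0\}=\sum_{p\ge 0}\overline{w}^{\,p}V(L)^pK\{Z_0,y_0,v_0\}\in\mc{K}_w. $$

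This final inclusion is the crux and the main obstacle. Its meaning is transparent in one variable, where $V(z)=z^m$ and, at the origin, $(I-\overline{w}V(L))^{-1}1=\tfrac1m\sum_{\zeta^m=w}K\{\zeta,1,1\}$ exhibits the eigenvector as a finite combination of Szeg\"o kernels at the $m$-th roots of $w$, each a point of $\mr{Sing}(V_w)$; analogous combinations handle the remaining wandering vectors. In general I would construct, for each $(Z_0,y_0)\in\mr{Sing}(V)$, a row contraction $(Z,y)$ with $V(Z)^*y=\overline{w}y$, built from $Z_0$ together with an ``$m$-th-root-of-$w$'' shift that simultaneously encodes the degree-$m$ block structure of $V$ and the geometric factor $\overline{w}^{\,p}$, so that the associated kernel --- or a finite combination, or a weak limit, of such kernels --- equals $\sum_{p\ge0}\overline{w}^{\,p}V(L)^pK\{Z_0,y_0,v_0\}$. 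Verifying the eigenvector relation $V(Z)^*y=\overline{w}y$ and the kernel identity is the delicate part: the degree-$m$ homogeneity of $V$ is exactly what supplies the ``$m$-th root'' branching, and this is the step at which one expects the infinite level $\B^d_{\infty}$ to be required.
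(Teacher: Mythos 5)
Your reductions are all correct: the commuting factorization $V_w(L)=(I-\ov{w}V(L))^{-1}(V(L)-wI)$, the lower bound $(1-|w|)\|h\|$ giving closed range, the identification $\mr{Sing}(V_w)=\mr{Sing}(V-w)=\{(Z,y):\, V(Z)^*y=\ov{w}y\}$, the invariance of $\scr{S}(V_w)$ under $(I-\ov{w}V(L))^{\pm 1}$, and the Wold-coordinate computation $\mc{N}_w=(I-\ov{w}V(L))^{-1}\mc{N}_0$ --- this last identity being precisely the paper's NC Crofoot transform, Proposition \ref{ncCrofoot}, specialized to $\Theta=V$. But all of this yields only the easy inclusion $\mc{K}_w\subseteq\mc{N}_w$ together with a clean restatement of the theorem as the reverse inclusion $\mc{N}_w\subseteq\mc{K}_w$. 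At the one step that carries the content of the theorem --- producing points of $\mr{Sing}(V_w)$ whose NC Szeg\"{o} kernels span the eigenspace $\mc{N}_w$ --- you switch from proving to conjecturing: the ``$m$-th-root-of-$w$ shift'' $(Z,y)$ built from $(Z_0,y_0)$ is never defined, the relation $V(Z)^*y=\ov{w}y$ is never verified, and the identity of the resulting kernel (or weak limit of kernels) with $\sum_{p\geq 0}\ov{w}^{\,p}V(L)^pK\{Z_0,y_0,v_0\}$ is never checked; you flag this yourself as ``the delicate part.'' That is a genuine gap, not a deferred verification: for $d>1$ and a general homogeneous inner $V$ there is no evident finite-dimensional root structure, and the one-variable model $V(z)=z^m$ gives no recipe for constructing such points.

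For comparison, the paper closes exactly this gap with a single device you did not find, although you correctly guessed where it lives: it evaluates at the infinite level, at the point $Z=rL$ itself. For $g\in\ker{V(L)^*}$, $n=\deg V$, and any $|w|^{1/n}<r<1$, set $g^{(r)}:=\left(I-\ov{w}r^{-n}V(L)\right)^{-1}g$; homogeneity gives $V(rL)^*=r^nV(L)^*$, whence $V(rL)^*g^{(r)}=\ov{w}g^{(r)}$, so $(rL,g^{(r)})\in\mr{Sing}(V_w)$ (note $rL\in\B^d_\infty$ --- the infinite level enters exactly as you predicted) and hence $K\{rL,g^{(r)},1\}\in\mc{K}_w$. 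A Taylor-coefficient computation, again exploiting homogeneity to match $|\alpha|$ against multiples of $n$, identifies this kernel with $(I-\ov{w}V(L))^{-1}g$ --- strictly speaking with $(I-\ov{w}V(L))^{-1}\Phi_r g$, which suffices because $\ker{V(L)^*}$ is graded, so $\Phi_r$ maps it into itself with dense range there --- and therefore, letting $g$ and $r$ vary, these kernels span $(I-\ov{w}V(L))^{-1}\mc{N}_0=\mc{N}_w$: precisely your missing inclusion. Two further remarks: the paper's argument needs only $\mc{N}_0=\ker{V(L)^*}$, not the Nullstellensatz-based fact $\mc{N}_0=\mc{K}_0$, so your appeal to the $w=0$ homogeneous case, while legitimate, buys nothing at the crux; and if your finite-level root-shift construction could actually be carried out, it would constitute a genuinely different proof, but as written your proposal establishes only the half of the theorem that was already immediate.
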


Again, in the classical Hardy space literature, given any inner $\theta \in H^\infty$, and any $w \in \D$, there is a natural unitary (isometric and onto) multiplier, $C_w (z)$, from $(\theta H^2 ) ^\perp$ onto $(\theta _w H^2 ) ^\perp $, where as before $\theta _w = \mu _w (\theta )$ is the $w-$Frostman shift of $\theta$. The unitary multiplication operator, $M _{C_w} : (\theta H^2 ) ^\perp \rightarrow (\theta _w H^2 ) ^\perp$ is sometimes called a \emph{Crofoot Transform} \cite{Crofoot}, \cite[Theorem 6.3.1]{GR-model}.

\begin{prop}{ (NC Crofoot Transform)} \label{ncCrofoot}
Left multiplication by 
$$ C_w (Z) := \sqrt{1 - |w| ^2} \left( I_n - \ov{w} \Theta (Z) \right) ^{-1}, $$ 
is an isometry from $\left( \Theta  (L ) \hardy \right) ^\perp$ onto $\left( \Theta _w (L) \hardy \right) ^\perp$.
\end{prop}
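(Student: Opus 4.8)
The plan is to reduce the statement to a single operator identity relating the two complementary model subspaces, and then to an elementary Neumann-series computation. Throughout, write $C := C_w(L) = \sqrt{1-|w|^2}\,(I - \ov{w}\Theta(L))^{-1}$, and set $P := I - \Theta(L)\Theta(L)^*$, $Q := I - \Theta_w(L)\Theta_w(L)^*$. Since $\Theta(L)$ is an isometry we have $\|\ov{w}\Theta(L)\| = |w| < 1$, so the Neumann series $(I-\ov{w}\Theta(L))^{-1} = \sum_{n\geq 0}\ov{w}^{\,n}\Theta(L)^n$ converges in operator norm and $C$ is a bounded, boundedly invertible left multiplier. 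Moreover $\Theta_w$ is inner by the preceding Corollary, so $P$ and $Q$ are the orthogonal projections onto $M := (\Theta(L)\hardy)^\perp$ and $N := (\Theta_w(L)\hardy)^\perp$, respectively.

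The key step is the operator identity $Q = C\,P\,C^*$. To prove it I would first record the purely algebraic identity $(I-\ov{w}\Theta)(I-w\Theta^*) - (\Theta - w)(\Theta^*-\ov{w}) = (1-|w|^2)(I - \Theta\Theta^*)$, where $\Theta = \Theta(L)$; upon expanding both products the cross terms match with consistent operator ordering, so it holds with no need for $\Theta(L)$ to commute with $\Theta(L)^*$. Writing $A := I - \ov{w}\Theta(L)$ and using that $A^{-1}$, $\Theta(L)-w$ and their adjoints are all power series in the single operator $\Theta(L)$ (hence may be rearranged as in the commutative case), one has $\Theta_w(L)\Theta_w(L)^* = A^{-1}(\Theta-w)(\Theta^*-\ov w)(A^*)^{-1}$; substituting the algebraic identity gives
\[
 I - \Theta_w(L)\Theta_w(L)^* = (1-|w|^2)\,A^{-1}(I-\Theta\Theta^*)(A^*)^{-1} = C\,P\,C^*,
\]
as claimed.

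Next I would show that $C$ is isometric on $M$. Because $\Theta(L)$ is an isometry, $f \in M$ if and only if $\Theta(L)^* f = 0$. Expanding $Cf = \sqrt{1-|w|^2}\sum_n \ov w^{\,n}\Theta(L)^n f$ and using $\Theta(L)^*\Theta(L) = I$ together with $\Theta(L)^* f = 0$, every off-diagonal inner product $\langle \Theta(L)^n f, \Theta(L)^m f\rangle$ with $n\neq m$ collapses to a term containing $(\Theta(L)^*)^k f = 0$ and so vanishes, while the diagonal terms give $\|\Theta(L)^n f\|^2 = \|f\|^2$. Summing the geometric series $(1-|w|^2)\sum_n |w|^{2n} = 1$ yields $\|Cf\| = \|f\|$; equivalently $P\,C^*C\,P = P$, that is $PC^*Cf = f$ for every $f \in M$.

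Finally I would combine these two facts to obtain surjectivity formally, with no further computation. For $f \in M$ the identity gives $QCf = C P C^* C f = C(PC^*Cf) = Cf$, so $CM \subseteq N$; and for $g \in N$ we get $g = Qg = CP C^* g = C(PC^*g)$ with $PC^*g \in M$, so $N \subseteq CM$. Hence $C$ maps $M$ onto $N$, and being isometric on $M$ it is the desired unitary. I expect the only genuine obstacle to be justifying the manipulations behind $Q = CPC^*$ inside the noncommutative algebra $\mult$: this is handled by noting that every operator occurring there lies in the commutative subalgebra generated by $\Theta(L)$ and its adjoint, so the sole relations invoked are $\Theta(L)^*\Theta(L) = I$ and the consistent ordering of the $\Theta\Theta^*$ terms — it is precisely the isometry, and not unitarity, of $\Theta(L)$ that the computation exploits.
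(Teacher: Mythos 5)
Your proof is correct, and it reaches the conclusion by a genuinely different route than the paper, although the algebraic heart is identical. The paper works at the level of CPNC reproducing kernels: it computes the kernel of the model space $\left( \Theta _w (L) \hardy \right)^\perp$ and factors it as $K^{\Theta _w}(Z,W) = (1-|w|^2)\left(I - \ov{w}\Theta(Z)\right)^{-1} K^\Theta(Z,W) \left(I - w\Theta(W)^*\right)^{-1}$, after which the unitarity of multiplication by $C_w$ follows from general NC-RKHS rescaling theory. You instead prove the operator identity $Q = CPC^*$ for the model-space projections $P = I - \Theta(L)\Theta(L)^*$ and $Q = I - \Theta_w(L)\Theta_w(L)^*$ --- which is precisely the operator-level restatement of the paper's kernel identity, resting on the same ring computation $(I - \ov{w}\Theta)(I - w\Theta^*) - (\Theta - w)(\Theta^* - \ov{w}) = (1-|w|^2)(I - \Theta\Theta^*)$ --- and then finish with no kernel theory at all: the Neumann-series orthogonality computation gives that $C$ is isometric on $M = \ker{\Theta(L)^*}$, and the projection algebra gives $CM = N$. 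What each approach buys: the paper's sits naturally inside its NC-RKHS framework and obtains unitarity immediately from the kernel factorization; yours is self-contained and elementary, using only that $\Theta(L)$ is an isometry and that $C$ is boundedly invertible. In fact your step 2 is formally redundant: since $C$ is invertible, $Q^2 = Q$ together with $Q = CPC^*$ already forces $PC^*CP = P$ (multiply $CPC^*CPC^* = CPC^*$ by $C^{-1}$ on the left and $(C^*)^{-1}$ on the right), though the direct verification is harmless and instructive. One imprecision in your closing remark: the subalgebra generated by $\Theta(L)$ \emph{and} its adjoint is not commutative, since a non-unitary isometry is not normal; but your computations never actually invoke such commutativity --- only that $A^{-1}$ and $\Theta(L) - w$ commute, both being power series in $\Theta(L)$ alone (and likewise for their adjoints in $\Theta(L)^*$), plus the order-respecting ring identity above --- so the proof itself is unaffected.
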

\begin{proof}
The NC kernel for the orthogonal complement of $\ran{\Theta _w (L)}$ is
\ba & &  K^{\Theta _w} (Z,W)  =  K (Z,W) - \Theta _w (Z) K (Z,W) \Theta _w (W) ^*, \nn \\
& = & ( I - \ov{w} \Theta (Z) ) ^{-1} \cdot \nn \\
& & \underbrace{ \left( (I - \ov{w} \Theta (Z) ) K(Z,W) (I - w \Theta (W) ^* ) - (\Theta (Z) - w I) K(Z,W) (\Theta (W) ^* - \ov{w} I ) \right)}_{=: G(Z, W )} \nn \\
 & & \cdot (I - w \Theta (W) ^* ) ^{-1}. \nn \ea
The expression $G(Z,W)$ can be expanded as:
\ba & &  K(Z, W) - \ov{w} \Theta (Z) K(Z,W) - w K(Z,W) \Theta (W) ^*  + |w| ^2 \Theta (Z) K(Z,W ) \Theta (W) ^* \nn  \\ 
& & - \Theta (Z) K(Z,W) \Theta (W) ^* + w K (Z,W) \Theta (W) ^* + \ov{w} \Theta (Z) K(Z,W) - |w| ^2 K(Z,W) \nn \\
& = & (1 - |w| ^2 ) \left( K (Z,W) - \Theta (Z) K(Z,W) \Theta (W) ^* \right) \nn \\
& = & (1 - |w| ^2 ) K^\Theta (Z,W). \nn \ea 
Hence, 
$$ K^{\Theta _w} (Z,W) = (1 - |w| ^2 ) ( I - \ov{w} \Theta (Z) ) ^{-1} K^\Theta (Z,W)  (I - w \Theta (W) ^* ) ^{-1}, $$ and the claim follows readily from this formula.
\end{proof}

Let $V \in \fp$ be an inner free homogeneous polynomial of degree $n \in \N _0$, fix $w \in \D$,
and consider the operator 
$$ \left( I - V \left( \frac{\ov{w} ^{1/n}} {r} L \right) \right) ^{-1} = \left( I - \frac{\ov{w}}{r^n} V(L) \right) ^{-1}, $$ where $\ov{w} ^{1/n}$ is any $n^{\mbox{th}}$ root of $\ov{w}$, and $0<r <1$ is chosen so that $$ \frac{|w|}{r^n} < 1, \ \Rightarrow , \ \emph{i.e.} \ |w| ^{1/n} < r < 1, $$ to ensure that this operator is well-defined as a convergent geometric series.

\begin{lemma}
Given any $h \in \ran{V(L) } ^\perp = \ker{V(L) ^*}$, and $|w| ^{1/n} < r < 1$, 
$$ h ^{(r)} := \left( I - \frac{\ov{w}}{r^n} V(L) \right) ^{-1} h \in \ker{V_w (rL) ^*}. $$
\end{lemma}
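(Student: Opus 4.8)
The plan is to reduce everything to the observation that $h^{(r)}$ is an eigenvector of $V(L)^*$. Write $A := V(L)$, which is an isometry, $A^* A = I$, since $V$ is inner; because $V$ is homogeneous of degree $n$ we have $V(rL) = r^n A$, and the operator appearing in the statement is $(I - cA)^{-1}$ with $c := \ov{w}/r^n$. The hypothesis $|w|^{1/n} < r$ gives $|c| = |w|/r^n < 1$, so $(I - cA)^{-1} = \sum_{k=0}^\infty c^k A^k$ converges in operator norm and $h^{(r)}$ is a well-defined element of $\hardy$.

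First I would establish that $A^* h^{(r)} = c\, h^{(r)}$. Expanding $h^{(r)} = \sum_{k \geq 0} c^k A^k h$ and applying $A^*$ term by term, the $k=0$ summand dies because $A^* h = 0$, while for $k \geq 1$ the isometry relation gives $A^* A^k h = A^{k-1} h$; reindexing yields $A^* h^{(r)} = c \sum_{m\geq 0} c^m A^m h = c\, h^{(r)}$. Thus $h^{(r)}$ is an eigenvector of $V(L)^*$ with eigenvalue $c$.

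Next I would compute the adjoint multiplier. Since $V_w = \mu_w \circ V$ and $V(rL) = r^n A$, functional calculus in $A$ (using that $\varphi_r$ is a unital homomorphism) gives $V_w(rL) = (r^n A - w)(I - \ov{w} r^n A)^{-1}$, so that $V_w(rL)^* = (I - w r^n A^*)^{-1}(r^n A^* - \ov{w})$, both factors being functions of $A^*$ and hence commuting. Applying the rightmost factor to the eigenvector gives $(r^n A^* - \ov{w}) h^{(r)} = (r^n c - \ov{w}) h^{(r)} = 0$, because $r^n c = r^n \cdot \ov{w}/r^n = \ov{w}$. Hence $V_w(rL)^* h^{(r)} = (I - w r^n A^*)^{-1} \cdot 0 = 0$, i.e. $h^{(r)} \in \ker{V_w(rL)^*}$, as claimed.

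The computation is short once the eigenvector structure is in place; the only point requiring care --- and the conceptual heart of the argument --- is the eigenvalue bookkeeping. The exponent $r^n$ that enters $V_w(rL)^*$ through homogeneity must exactly cancel the $r^{-n}$ built into $c$, and this is precisely what forces the coefficient $\ov{w}/r^n$ (rather than, say, $\ov{w}\,r^n$) in the definition of $h^{(r)}$. Getting this matching right, together with keeping track that $A$ and $A^*$ do not commute while functions of $A^*$ alone do, is where the (minor) obstacle lies; everything else is routine.
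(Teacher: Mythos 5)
Your proof is correct and takes essentially the same route as the paper: both expand $h^{(r)}$ as a geometric series, use $V(L)^*h=0$ together with the isometry relation $V(L)^*V(L)=I$ to exhibit $h^{(r)}$ as an eigenvector, and then annihilate it with the factor $V(rL)^*-\ov{w}I$ appearing in $\mu_w(V(rL))^*$. The only cosmetic difference is that you phrase the eigenvalue equation for $V(L)^*$ with eigenvalue $\ov{w}/r^n$, whereas the paper states the equivalent relation $V(rL)^*h^{(r)}=\ov{w}\,h^{(r)}$ directly via homogeneity.
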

\begin{proof}
Expand $h^{(r)}$ as a convergent geometric series and calculate:
\ba V (r L ) ^* h ^{(r)} & = & r^n V(L) ^* \sum _{k=0} ^\infty \frac{\ov{w} ^k}{r^{n\cdot k}} V(L) ^k h \nn \\
& = & r^n \underbrace{V(L) ^* h}_{=0} + r^n \sum _{k=1} ^\infty \frac{\ov{w} ^k}{r^{n\cdot k}} V(L) ^{k-1} h \nn \\
& = & r^n \frac{\ov{w}}{r^n} \sum _{k=1} ^\infty \frac{\ov{w} ^{k-1}}{r^{n\cdot (k-1)}} V(L) ^{k-1} h \nn \\
& = & \ov{w} h^{(r)}. \nn \ea This proves that every $h^{(r)}$ is an eigenvector of $V(rL) ^*$ to eigenvalue $\ov{w}$. It then follows that,
\ba V_w (rL) ^* h^{(r)} &= & \mu _w (V(r L)) ^* h^{(r)} \nn \\
& = & (I  - w V(rL) ) ^{-1} \underbrace{(V (rL) ^* - \ov{w} I ) h^{(r)} } _{=0}. \nn \ea  
\end{proof}
The above lemma implies that the following linear span of NC Szeg\"o kernels,
$$ \mc{K} := \bigvee _{\substack{|w| ^{1/n} < r < 1 \\ g \in \ran{V(L)} ^\perp, \ h \in \hardy}} K \{ rL , g^{(r)} , h \} \subseteq \ran{V_w (L) } ^\perp. $$   
If $B_w (L)$ is the Blaschke factor of the inner $V_w (L)$, then it follows that 
$$ \mc{K} \subseteq \ran{B_w (L) } ^\perp \subseteq \ran{V_w (L) } ^\perp. $$ 
To prove that $V_w (L)$ is Blaschke, \emph{i.e.} that $V_w = B_w$, it then suffices to show that $\mc{K} = \ran{V_w (L)} ^\perp$.

\begin{proof}{ (of Theorem \ref{FSBlaschke})}
Consider any $K \{ rL , g^{(r)}, 1 \} \in \mc{K}$, where $|w| ^{1/n} < r < 1$ is fixed, $g \in \ran{V(L)} ^\perp$, and $g^{(r)} = \left( I - \frac{\ov{w}}{r^n} V(L) \right) ^{-1} g$ as above.
This NC Szeg\"{o} kernel can be expanded as:
$$ K \{ rL , g^{(r)}, 1 \}  =  K \{ rL , g^{(r)} , 1 \} (Z) = \sum _{\alpha \in \F ^d} r^{|\alpha |} \ip{ L^\alpha 1}{g^{(r)}}_{\hardy} Z^\alpha. $$  
Further expanding each $g^{(r)}$ as a convergent geometric sum, the $\alpha ^{\mr{th}}$ Taylor series coefficient is:
\ba r^{|\alpha |} \ip{L^\alpha 1 }{g^{(r)}}_{\hardy} & = & \sum _{k=0} ^\infty \ov{w} ^k \frac{r^{|\alpha |}}{r^{n \cdot k}} \underbrace{\ip{L^\alpha 1}{V(L) ^k g } _{\hardy} }_{ =0 \ \mbox{unless } |\alpha | = n \cdot k } \nn \\
& = & \sum _{k=0} ^\infty \ov{w} ^k \ip{L^\alpha 1}{V(L) ^k g}_{\hardy} \nn \\
& = & \ip{L^\alpha 1}{\left( I - \ov{w} V(L)  \right) ^{-1} g}_{\hardy}. \nn \ea 
In the above the first line vanishes unless $|\alpha | = nk$ because $V(L )$ is a homogeneous free polynomial of degree $n$, so that any $V(L) ^k $ is a homogeneous free polynomial of degree $n \cdot k$. If $|\alpha | \neq n \cdot k$, then
$$ (V(L) ^{k} ) ^* L^\alpha 1 = 0. $$ 
This proves that 
\ba K \{ r L , g^{(r)} , 1 \} & = & \left( I - \ov{w} V(L)  \right) ^{-1} g \nn \\
& = & \sqrt{ 1 - |w| ^2 } ^{-1}  M^L _{C _w  } g, \nn \ea which belongs to $\ran{\mu _w ( V (L ) ) } ^\perp$ by Proposition \ref{ncCrofoot}. Since $g$ can be any element in $\ran{V(L)} ^\perp$, and left multiplication by the NC Crofoot multiplier $C _{w} (Z)$ is an isometry of $\ran{V(L)} ^\perp$ onto $\ran{V_w (L) } ^\perp$, it follows that the set $\mc{K}$ of NC Szeg\"{o} kernels is actually equal to 
$ \ran{V_w (L) } ^\perp$, and this proves that $V_w = B_w$ is Blaschke.
\end{proof}

\begin{eg}
The homogeneous free polynomial,
$$ V(Z) := \frac{1}{\sqrt{2}} \left( Z_1 Z_2 - Z_2 Z_1 \right), $$ is a (left) inner multiplier. Consider the following free polynomial of degree $2$:
$$ p(Z) := I_n - \sqrt{2} V(Z) = I_n - Z_1 Z_2 + Z_2 Z_1. $$ 
This free polynomial has the inner-outer factorization:
$$ p(Z) = \underbrace{\left( \frac{I_n}{\sqrt{2} } - V(Z) \right) \left( I_n - \frac{1}{\sqrt{2}} V(Z) \right) ^{-1}}_{= \mu _{\frac{1}{\sqrt{2}}} (V(Z)), \ \mbox{inner}} \cdot \underbrace{\sqrt{2} \left( I_n - \frac{V(Z)}{\sqrt{2}} \right)}_{\mbox{outer}}. $$ 
Here, setting $w = \frac{1}{\sqrt{2}}$, Lemma \ref{Fshift} implies that $\mu _w ( V(Z) ) $ is again NC inner, and the second term in the above is invertible as a left multiplier, hence NC outer. Theorem \ref{FSBlaschke} then implies that the NC inner factor, $\mu _{\frac{1}{\sqrt{2}}} (V)$, of $p$, is NC Blaschke.
\end{eg}

\subsection{Elements of the NC Disk Algebra with closed range}

\begin{thm} \label{Nosing}
If $H$ belongs to the NC disk algebra $\A := \mr{Alg} (I,L) ^{-\| \cdot \| }$ and has closed range, then its inner factor is Blaschke. In particular, any isometry in $\A$ is Blaschke.
\end{thm}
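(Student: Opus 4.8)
The plan is to reduce the statement to the behaviour of the range projections of the dilates $H(rL)$ as $r \uparrow 1$, and then to play the \emph{norm} convergence coming from membership in $\A$ against the merely \emph{strong} convergence established in Proposition~\ref{conranproj}.

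First I would set up the relevant factorizations. Assuming $H \neq 0$, write the inner-outer factorization $H = \Theta \cdot F$, and then, by Theorem~\ref{NCBSOthm}, the Blaschke-Singular factorization $\Theta = B \cdot S$ of its inner factor, where $B$ is a scalar NC Blaschke inner with $\ran{B(L)} = \scr{S}(\Theta) = \scr{S}(H)$ and $S$ is a scalar NC singular inner. Recall from the proof of Theorem~\ref{NCBSOthm} that for every $0 < r < 1$ both $S(rL)$ and, by Theorem~\ref{InvOut}, $F(rL)$ are invertible left multipliers. Since $\Theta(L)$ is an isometry and $F(L)$ is injective (an outer function is pointwise invertible, so $F(L)g = 0$ forces $g \equiv 0$), the operator $H(L) = \Theta(L)F(L)$ is injective; together with the closed-range hypothesis this makes $H(L)$ bounded below, i.e. $H(L)^*H(L) \geq \delta^2 I$ for some $\delta > 0$. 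As $\Theta(L)^*\Theta(L) = I$, the same bound reads $F(L)^*F(L) \geq \delta^2 I$, so the outer multiplier $F(L)$ is bounded below with dense range, hence invertible; consequently $\ran{H(L)} = \ran{\Theta(L)}$.

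Next I would identify the range projections of the dilates. For $0 < r < 1$ the product $S(rL)F(rL)$ is invertible, so $\ran{H(rL)} = B(rL)\,S(rL)F(rL)\hardy = \ran{B(rL)}$ \emph{as sets}; in the notation of Section~\ref{wanddim}, the orthogonal projection onto $\overline{\ran{H(rL)}}$ is therefore exactly $Q_r = I - T_r$. Since $H \in \A$ is a norm-limit of NC polynomials and each $\varphi_r$ is completely contractive, an $\varepsilon/3$ argument gives $H(rL) \to H(L)$ in operator norm as $r \uparrow 1$. Because $H(L)$ is bounded below and being bounded below is an open condition in norm, $H(rL)$ is bounded below for $r$ near $1$; then $H(rL)^*H(rL) \to H(L)^*H(L)$ in norm, these operators are uniformly invertible near $r=1$, and the projection formula $P_r = H(rL)\big(H(rL)^*H(rL)\big)^{-1}H(rL)^*$ shows that $Q_r = P_r$ converges \emph{in norm} to the orthogonal projection onto $\ran{H(L)}$.

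Finally I would reconcile the two limits. Proposition~\ref{conranproj} gives $T_r \to I - B(L)B(L)^*$ in the strong operator topology, that is $Q_r \to B(L)B(L)^*$ strongly, while the previous step gives $Q_r \to (\text{projection onto } \ran{H(L)})$ in norm, hence also strongly. Uniqueness of strong limits forces the projection onto $\ran{H(L)}$ to equal $B(L)B(L)^*$, so $\ran{H(L)} = \ran{B(L)}$. Combining this with $\ran{H(L)} = \ran{\Theta(L)}$ and $\ran{B(L)} = \scr{S}(\Theta)$ yields $\ran{\Theta(L)} = \scr{S}(\Theta)$, which is precisely the statement that $\Theta$ is NC Blaschke. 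The ``in particular'' claim is then immediate, since an isometry in $\A$ has (trivially) closed range and is its own inner factor. The main obstacle I anticipate is the passage from norm convergence of $H(rL)$ to norm convergence of the range projections $Q_r$: this rests on $H(L)$ being bounded below, so that $H(rL)^*H(rL)$ is uniformly invertible near $r = 1$, which is exactly where the closed-range hypothesis enters. The disk-algebra hypothesis is what upgrades the mere strong convergence of Proposition~\ref{conranproj} to the norm convergence that ``cannot see'' a nontrivial singular factor.
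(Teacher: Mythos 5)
Your proof is correct, but it takes a genuinely different route from the paper's. The paper's argument never invokes Theorem \ref{NCBSOthm} or Proposition \ref{conranproj}: it fixes $g \in \scr{S}(H)$, observes that every $x \in \ran{H(rL)}^{\perp}$ yields an \emph{infinite-level} singularity point $(rL, x) \in \mr{Sing}_{\infty}(H)$, so that $g_r = g(rL)1$ lies in $\overline{\ran{H(rL)}} = \ran{H(rL)}$ once Lemma \ref{bddbelow} (the same norm-convergence and bounded-below observation you make) gives closed range for $r$ near $1$; writing $g_r = H(rL)x^{(r)}$, the uniform lower bound makes the net $(x^{(r)})$ bounded, and a weakly convergent subsequence together with pointwise evaluation gives $g = H(L)x \in \ran{H(L)}$, proving $\scr{S}(H) \subseteq \ran{H(L)}$ directly. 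You instead import the two heaviest results of Sections \ref{sec:main}--\ref{wanddim} --- the scalar factorization $\Theta = B \cdot S$ with $S(rL)$ and $F(rL)$ invertible, and the SOT convergence $Q_r \rightarrow B(L)B(L)^*$ of Proposition \ref{conranproj} --- and play them against the norm convergence of $Q_r$ to the projection onto $\ran{H(L)}$, which you correctly derive from the formula $H(rL)\left(H(rL)^*H(rL)\right)^{-1}H(rL)^*$ and the uniform lower bound; uniqueness of SOT limits then forces $\ran{B(L)} = \ran{H(L)} = \ran{\Theta(L)} = \scr{S}(\Theta)$. Each step checks out: $\ran{H(rL)} = \ran{B(rL)}$ as sets because $S(rL)F(rL)$ is invertible, Proposition \ref{conranproj} applies since $B$ is Blaschke by construction (Proposition \ref{rowcolprop} and Lemma \ref{Blakernel}), and there is no circularity since Theorem \ref{Nosing} comes after Theorem \ref{NCBSOthm} in the paper's logical order. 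As for what each approach buys: yours is shorter modulo the main theorem and makes the moral transparent --- norm convergence of the range projections cannot tolerate a nontrivial singular factor --- whereas the paper's proof needs only the inner-outer factorization, Corollary \ref{innersing}, and Lemma \ref{bddbelow}, and it exhibits explicitly how the infinite level $\mr{Sing}_{\infty}$ enters (via the pairs $(rL,x)$), which matters for the paper's later discussion of whether the infinite level can be dispensed with. One cosmetic caution: your use of $P_r$ for the range projection of $H(rL)$ collides with the paper's $P_r$ for the wandering-space projection in Section \ref{wanddim}; rename it to avoid confusion.
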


\begin{lemma} \label{bddbelow}
Given $0<r\leq 1$, the left multipliers $H_r (L) := H(rL)$ are uniformly bounded below (and hence have closed ranges) for $r$ sufficiently close to $1$.
\end{lemma}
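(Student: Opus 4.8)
The plan is to prove the much stronger statement that $H(L)$ itself is bounded below, and then to transfer this property to the rescaled multipliers $H(rL)$ by a norm-perturbation argument. The crucial point, and the place where membership in the \emph{norm-closed} disk algebra $\A$ is used, is that $H(rL) \to H(L)$ in operator norm as $r \uparrow 1$; for a general element of $\mult$ only weaker (e.g. strong-$*$) convergence is available. We may assume $H \neq 0$, as otherwise the assertion is vacuous.

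First I would show $H(L)$ is bounded below. Since $H \neq 0$, its NC inner-outer factorization $H = \Theta \cdot G$ has $\Theta(L)$ an isometry and $G \in \mult$ an NC outer. As noted in the proof of Theorem \ref{InvOut}, an NC outer is pointwise invertible throughout $\B^d_{\aleph_0}$; hence $G(L)g = G\cdot g = 0$ forces $G(Z)g(Z) = 0$ with $G(Z)$ invertible at every point, so $g \equiv 0$ and $G(L)$ is injective. Therefore $H(L) = \Theta(L) G(L)$ is injective. An injective operator with closed range is bounded below (its inverse on the range is bounded by the open mapping theorem), so there is $c > 0$ with
\[ \| H(L) x \| \geq c \, \| x \| \qquad \text{for all } x \in \hardy. \]

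Next I would establish the norm convergence $H(rL) \to H(L)$. For a free polynomial $p$ this is immediate, since $\varphi_r(p(L)) = p(rL)$ only rescales the finitely many coefficients of $p$ by factors $r^{|\alpha|} \to 1$. For general $H \in \A$ pick polynomials with $p_j(L) \to H(L)$ in norm; using that $\varphi_r$ is a completely contractive (in particular contractive) homomorphism, the estimate
\[ \| H(rL) - H(L) \| \leq \| \varphi_r(H(L) - p_j(L)) \| + \| p_j(rL) - p_j(L) \| + \| p_j(L) - H(L) \| \]
shows $\| H(rL) - H(L) \| \to 0$ as $r \uparrow 1$. Finally, choosing $r_0 < 1$ with $\|H(rL) - H(L)\| < c/2$ for all $r \in (r_0, 1]$ gives
\[ \| H(rL) x \| \geq \| H(L) x \| - \| (H(rL) - H(L)) x \| \geq \tfrac{c}{2}\, \| x \| \]
for every $x$ and every such $r$, so the $H(rL)$ are uniformly bounded below and have closed ranges. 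I expect the norm-convergence step to be the only genuine obstacle: it is where the norm-closedness of $\A$ is indispensable, whereas the injectivity and the concluding perturbation estimate are routine.
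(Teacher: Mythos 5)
Your proof is correct and takes essentially the same route as the paper's: show $H(L)$ is bounded below (injectivity together with the closed-range hypothesis and the open mapping theorem), then use the norm convergence $H(rL) \to H(L)$, valid precisely because $H$ lies in the norm-closed disk algebra $\A$, and conclude with the perturbation estimate $\| H(rL) x \| \geq \| H(L) x \| - \| (H(L) - H(rL)) x \| \geq \tfrac{c}{2} \| x \|$. The only difference is that you supply details the paper merely asserts---injectivity via pointwise invertibility of the outer factor, and the norm convergence via polynomial approximation using contractivity of $\varphi_r$---both of which are sound.
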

\begin{proof}
Each of the left multipliers $H_r (L)$ are injective. By the open mapping theorem, it follows that $H_r (L)$ is bounded below if and only if it has closed range.  In particular, by assumption we have that $H(L)$ is bounded below, by say $\delta >0$. Since we further assume that $H$ is in the NC disk algebra, $H(rL) \rightarrow H(L)$ in operator norm as $r \uparrow 1$ so that there is a $0<R<1$ so that $r>R$ implies that $\| H (rL) - H(L) \| < \eps$, where $\eps := \delta /2$. Hence, for any $x \in \hardy$,
\ba \| H(rL) x \| & \geq & \| H(L) x \| - \| (H(L) - H(rL) ) x \| ^2 \nn \\
& \geq & \frac{\delta }{2} \| x \|, \nn \ea so that $H(rL)$ is uniformly bounded below by $\delta /2$ for $R < r \leq 1$.  
\end{proof}
\begin{proof}{ (of Theorem \ref{Nosing})}
To prove that $\Theta $ is Blaschke, we need to show that $\scr{S} (\Theta ) = \ran{\Theta (L)}$. Since $H$ has closed range, $\ran{H(L)} = \ran{\Theta (L)}$, where $H(L) = \Theta (L) F(L)$ is the inner-outer factorization of $H$. Hence, by Lemma \ref{innersing},
we need to show that $\scr{S} (H) = \ran{H(L)}$. 

Fix any $0<r<1$, and consider any $x \in \ran{H(rL)} ^\perp$. Observe that the pair $\left( rL , x \right) \in \mr{Sing} _\infty (H)$. It follows that if $g$ is any element in $\scr{S} (H)$, then 
$$ \ip{x}{g(rL)1}_{\hardy} =0, $$ for any $x \in \ran{H(rL)} ^\perp$, and this proves that $g_r = g(rL) 1 \in \ran{H(rL)} ^{-\| \cdot \|}, $ for any $0<r<1$. Hence, for $r$ sufficiently close to $1$,
$$ g_r = g(rL) 1 \in \ran{H (rL) }, $$ since $H(rL)$ has closed range for $r$ sufficiently close to $1$ by the previous lemma. In conclusion, 
$$ g_r = H(rL) x ^{(r)} , $$ for some $x ^{(r)}  \in \hardy$. Observe that the net $( x ^{(r)} )$ is uniformly bounded above (for $r$ close to $1$). By the previous lemma, there is an $\eps >0$ and a $0<R<1$ so that $r >R$ implies that $H(rL)$ is bounded below by $\eps$. Hence, for such $r$, since the net $(g_r )$ is convergent and hence uniformly bounded in norm,
\ba  \| g_r \| & = & \| H(rL) x  ^{(r)} \| \nn \\
&\geq & \eps \| x ^{(r)} \|, \nn \ea proving that $\| x ^{(r)} \| $ is uniformly bounded for $r >R$.
By weak compactness, there is a weakly convergent subsequence $x_k := x ^{(r_k)}$, which therefore converges pointwise to some $x \in \hardy$. Hence, for any $Z \in \B ^d _\N$,
$$ \begin{array}{ccccc}  g(r_k Z) & =&  H(r_k Z) & \cdot &  x_k (Z)  \\
                \downarrow & &  \downarrow & & \downarrow   \\
                g(Z) & = & H(Z) & \cdot &  x (Z), \end{array} $$ so that $g= H(L) x \in \ran{H(L)}$. This completes the proof.
\end{proof}

\subsection{NC singular inner examples}

If $B \in [\mult ] _1$, \emph{i.e.} $B$ belongs to the NC Schur class of all contractive NC functions in $\B ^d _\N$, then $B(L) $ is a contraction on the NC Hardy space. By \cite[Chapter 8]{NF}, (provided $B(L) \neq I _{\hardy}$) $B(L)$ is the co-generator of a $C_0$ semigroup of contractions on $\hardy$. Namely, if 
$$ H_B (L) := (I - B(L) ) ^{-1} ( I + B(L) ), $$ is the inverse Cayley transform of $B$, then $H_B (L)$ is a closed, densely-defined accretive operator (numerical range in the right half-plane), so that $H_B (Z)$ belongs to the NC Herglotz class of locally bounded (holomorphic) NC functions in $\B ^d _\N$ with positive semi-definite real part:
$$ \re{H_B (Z)} \geq 0_n, \quad \quad Z \in \B ^d _n. $$ 
Since $1$ is not an eigenvalue of $B(L)$, \cite[Theorem III.8.1]{NF} implies that 
$$ B_t (L) := \exp ( -t  H_B (L) ); \quad \quad t \geq 0, $$ is a $SOT-$continuous one-parameter monoid of contractions on $\hardy$, so that $B_t (Z) \in [ \mult ] _1$ belongs to the NC Schur class for every $t \geq 0$. Moreover, by \cite[Proposition III.8.2]{NF}, $B_t (L)$ will be an isometry on $\hardy$ for every $t \geq 0$, \emph{i.e.} $B_t $ will be NC inner, if and only if $B(L)$ is NC inner. It further follows that if $B(L)$ is NC inner, then every $B_t (L)$ will be an NC singular inner since  $$ B_t (Z) = \exp ( - H_B (Z) ), \quad Z \in \B ^d _n, $$ is clearly pointwise invertible in $\B ^d _{\aleph _0}$. This provides a large class of examples of NC singular inner functions, and products of such NC singular inner functions are again NC singular inner. It is unclear whether or not all NC singular inners can be obtained in this way.

\section{Outlook}
The NC Blaschke-Singular-Outer factorization raises several natural questions. Classically, the inner factor of any polynomial in $\D$ is a finite Blaschke product, and hence a rational analytic function with poles outside of the open disk. Rational functions have been studied extensively in the NC setting by several authors \cite{Volcic,Volcic2,Volcic3,KVV-rational,PV-realize,Helton-rational}.

\begin{quest}
If $p \in \fp$ is any NC polynomial, is its NC inner factor Blaschke?  Is it an NC rational function? Is the NC outer factor an NC polynomial?
\end{quest}

Frostman's theorem states that given any inner function, $\theta$, in the unit disk, `almost all' of its M\"{o}bius transformations are Blaschke inner. There is also a theory of so-called \emph{indestructible Blaschke products}, these are Blaschke inner functions so that their images under any M\"{o}bius transformation are again Blaschke products. In particular, the Blaschke inner factor of any polynomial (a finite Blaschke product) is indestructible  \cite{Ross-indestruct}, \cite[Frostman's Theorem, Theorem 2.6.1]{GR-model}.

\begin{quest}
Does an NC analogue of Frostman's theorem hold? If the inner factor of any NC polynomial is Blaschke, is it indestructible?
\end{quest}

Any Blaschke inner in the disk is a (potentially) infinite product of \emph{Blaschke factors}:
$$ B_w (z) := \frac{z-w}{1-\ov{w}z}. $$ Similarly one could define \emph{NC Blaschke factors} as irreducible NC Blaschke inner functions, $B$, with the property that there are no non-trivial NC Blaschke inners $B_1, B_2$ so that $B = B_1 B_2$. A final question is whether there is a nice characterization of NC Blaschke factors.

\appendix
\section{Idempotents in $\mult \otimes \C^{n\times n}$} \label{sec:extra}
\label{appendix}

\begin{thm}
Let $E \in \mult \otimes \C ^{n \times n}$ be an idempotent, then there exists an orthogonal projection $P \in \C ^{n\times n}$ and an $S \in \operatorname{GL}_n( \mult )$, such that $$E = S^{-1} \left(I_{\hardy} \otimes P\right) S.$$
\end{thm}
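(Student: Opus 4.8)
The plan is to reduce the statement to the non-commutative Beurling theorem applied to the range and kernel of $E$. Since $E$ is a bounded idempotent, its range $\M := \ran{E} = \ker{I-E}$ and its kernel $\cN := \ker{E} = \ran{I-E}$ are closed, they meet only in $\{0\}$ (if $x = Ey$ and $Ex=0$ then $x = Ex = 0$), and $\hardy \otimes \C ^n = \M \oplus \cN$ as a topological, not orthogonal, direct sum via $x = Ex + (I-E)x$. Because $E$ lies in $\mult \otimes \C ^{n\times n}$, which is the commutant of the algebra of right multipliers \cite[Theorem 1.2]{DP-inv}, it commutes with each $R_k \otimes I_n$, so both $\M$ and $\cN$ are invariant for the right shifts. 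The NC Beurling theorem \cite[Corollary 2.2]{DP-inv}, \cite[Theorem 1.7]{Pop-entropy} then produces inner (isometric) left multipliers $\Phi (L) \in \mult \otimes \mc{L}(\cH _1 , \C ^n)$ and $\Psi (L) \in \mult \otimes \mc{L}(\cH _2 , \C ^n)$ with $\ran{\Phi (L)} = \M$ and $\ran{\Psi (L)} = \cN$, for some coefficient spaces $\cH _1 , \cH _2$.

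Next I would assemble the row multiplier $T := \bsm \Phi (L) , & \Psi (L) \esm \in \mult \otimes \mc{L}(\cH _1 \oplus \cH _2 , \C ^n)$, viewed as a map $\hardy \otimes (\cH _1 \oplus \cH _2 ) \to \hardy \otimes \C ^n$. It is surjective, since $\ran{\Phi(L)} + \ran{\Psi(L)} = \M + \cN = \hardy \otimes \C ^n$, and injective, since $\Phi(L) a + \Psi(L) b = 0$ forces $\Phi(L) a = -\Psi(L) b \in \M \cap \cN = \{0\}$ and each of $\Phi(L), \Psi(L)$ is injective. By the open mapping theorem $T$ has a bounded inverse; and since $T$ commutes with $R_k \otimes I$, so does $T^{-1}$, whence $T^{-1}$ is itself a left multiplier by \cite[Theorem 1.2]{DP-inv}. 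Thus $T$ is invertible inside the multiplier algebra.

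The dimension count then comes for free. Comparing constant ($\emptyset$) Fourier coefficients in the identities $T^{-1}(L) T(L) = I$ and $T(L) T^{-1}(L) = I$ shows that $T(0)$ and $T^{-1}(0)$ are mutually inverse linear maps between $\cH _1 \oplus \cH _2$ and $\C ^n$, so these coefficient spaces are finite-dimensional with $\dim{\cH _1 \oplus \cH _2} = n$. Fix a unitary $U : \C ^n \to \cH _1 \oplus \cH _2$ carrying the standard splitting $\C ^n = \C ^{n_1} \oplus \C ^{n_2}$ ($n_i = \dim{\cH _i}$) onto $\cH _1 \oplus \cH _2$, and let $P \in \C ^{n\times n}$ be the orthogonal projection onto the first $n_1$ coordinates, so $U P U^* = P_{\cH _1}$, the orthogonal projection of $\cH _1 \oplus \cH _2$ onto $\cH _1$. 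Since $T$ carries $\hardy \otimes \cH _1$ onto $\M = \ran{E}$ and $\hardy \otimes \cH _2$ onto $\cN = \ker{E}$, a one-line check on each summand gives the intertwining $E T = T (I_{\hardy} \otimes P_{\cH _1})$. Setting $S := (I_{\hardy} \otimes U^*) T^{-1} \in \operatorname{GL}_n(\mult)$ (with $S^{-1} = T(I_{\hardy} \otimes U)$) then yields $E = S^{-1}(I_{\hardy} \otimes P) S$, as required.

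The step to watch is the passage from a bounded algebraic bijection to an invertible \emph{multiplier}: the genuine content is that $T^{-1}$ is again a left multiplier, which is precisely where the identification of $\mult$ as the commutant of the right shift algebra is indispensable, together with the observation that $T$ and $T^{-1}$ being multipliers forces the two Beurling coefficient spaces to combine to dimension exactly $n$. The remaining ingredients—closedness and complementarity of $\ran{E}$ and $\ker{E}$, and the intertwining relation $E T = T(I_{\hardy} \otimes P_{\cH _1})$—are routine idempotent bookkeeping.
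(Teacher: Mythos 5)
Your proposal is correct, and its overall architecture coincides with the paper's: both proofs observe that $\ran{E}$ and $\ker{E}$ are closed, complementary, right-shift-invariant subspaces, represent each by an inner multiplier via the NC Beurling theorem, assemble the row $T = (\Phi, \Psi)$, deduce that $T$ is a bounded bijection, and use the commutant theorem \cite[Theorem 1.2]{DP-inv} to conclude that $T^{-1}$ is again a left multiplier, so that $E$ is similar to a constant projection. The one genuine point of divergence is the dimension count. The paper first bounds the two wandering dimensions $m, k \leq n$ by the wandering-subspace argument of Lemma \ref{monrank} (so the Beurling coefficient spaces are finite-dimensional from the outset), and then squeezes $m + k = n$ from the surjectivity of $S$ and of $S^{-1}$. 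You instead evaluate at the origin: since point evaluation (equivalently, extraction of the vacuum Fourier coefficient) is multiplicative on products of multipliers, the identities $T^{-1}(L)T(L) = I$ and $T(L)T^{-1}(L) = I$ force $T(0)$ and $T^{-1}(0)$ to be mutually inverse linear maps between $\cH_1 \oplus \cH_2$ and $\C^n$, which yields finite-dimensionality and $\dim(\cH_1 \oplus \cH_2) = n$ in one stroke, with no appeal to wandering subspaces at all. Your route is the more elementary of the two and gets finiteness of the coefficient spaces for free (the paper needs $m,k \leq n$ a priori even to speak of matrices over $\mult$), while the paper's wandering-dimension bookkeeping has the advantage of being the same mechanism it exploits elsewhere (Lemma \ref{monrank}, Corollary \ref{Binjcor}), so it comes at no extra cost in context. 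Your explicit verification of the intertwining $ET = T(I_{\hardy}\otimes P_{\cH_1})$ and the conjugation by the constant unitary $U$ are exactly the "similar to the projection onto the last components" step the paper leaves implicit.
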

In particular, this implies that there are no non-trivial finitely generated projective modules over $\mult$ and thus $\mult$ is a semi-free ideal ring, see \cite[Section 2.3]{Cohn}.
\begin{proof}
Let $\cM = \ran{E} = \ker{I - E}$ and $\cK = \ker E$ and note that $\cM + \cN = \hardy\otimes \C^n$. In particular, the Friedrichs angle between $\cM$ and $\cK$ is non-zero. Additionally, the spaces $\cM$ and $\cK$ are $R \otimes I_n-$invariant and closed. Let $\scr{W}_{\cM}$ and $\scr{W}_{\cK}$ be the wandering subspaces of $\cM$ and $\cK$, respectively. Note that since $\hardy \otimes \C^n$ surjects onto $\cM$ and $\cK$, that $m = \dim W_{\cM}, k = \dim W_{\cK} \leq n$. (This follows as in the proof of Lemma \ref{monrank}.)  Let $V_{\cM} (L)  \colon F ^2 _d \otimes \C^m \to \hardy \otimes \C^n$ be the inner left multiplier in $\mult \otimes \C ^{n\times m}$  with image $\cM$ and similarly $V_{\cK} (L) \in \mult \otimes \C ^{n \times k}$ be the isometric left multiplier with image $\cK$. Consider $S (L) \in \mult \otimes  \C ^{n \times (k+m)}$ given by $S = (V_{\cM}, V_{\cK})$. Clearly, $S $ is surjective and bounded. Furthermore, since $\cM \cap \cK = \{0\}$, $S$ is also injective and thus has a bounded inverse. For every $1 \leq i \leq d$, $S (R_i \otimes I_n) = (R_i \otimes I_{m}) S$ so that $S=S(L)$. Multiplying by $S^{-1}$ on both left and right we get that $(R_i \otimes I_n) S^{-1} = S^{-1} (R_i \otimes I_{m})$. Thus $S^{-1} \in \mult \otimes \C ^{(k+m) \times n}$. 

Note that $S (L) \colon \hardy \otimes \C^{k+m} \to \hardy \otimes \C^n$ is surjective and thus $m+ k \geq n$. Similarly $S^{-1}$ is surjective and thus $n \geq m +k$. Therefore, $m+k=n$ and thus the matrix $S (L)$ is square and $E (L)$ is similar to the projection onto the $m$ last components of $\hardy \otimes \C^{m+k}$ via $S$.
\end{proof}

\begin{remark}
The similarity, $S (L)$, is not unique. Multiplication by any constant invertible matrix in the commutant of $P$, for example, will result in a different $S$.
\end{remark}
\begin{cor}
An operator-valued left multiplier $S \in  \mult \otimes \C ^{n\times k}$ is invertible if and only if $n=k$ and its inverse is in $\mult \otimes \C ^{n\times n}$.
\end{cor}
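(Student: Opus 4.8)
The plan is to prove the nontrivial implication, since the reverse is immediate: if $n=k$ and $S^{-1}\in\mult\otimes\C^{n\times n}$, then $S$ has a bounded two-sided inverse and is invertible. So suppose $S=S(L)\in\mult\otimes\C^{n\times k}$ is invertible, and write $T:=S^{-1}$ for its a priori merely bounded inverse, viewed as an operator $\hardy\otimes\C^n\to\hardy\otimes\C^k$, with $TS=I_{\hardy\otimes\C^k}$ and $ST=I_{\hardy\otimes\C^n}$. The two things to establish are that $T$ is again a left multiplier and that $n=k$; these together give $S^{-1}=T\in\mult\otimes\C^{n\times n}$.

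The key step, and the one to get right, is that $T$ automatically lies in $\mult\otimes\C^{k\times n}$. Since $S$ is an operator-valued left multiplier it commutes with the right shifts, $S(R_i\otimes I_k)=(R_i\otimes I_n)S$ for $1\le i\le d$. Multiplying this relation on the left by $T$ and using $TS=I$ gives $R_i\otimes I_k=T(R_i\otimes I_n)S$; multiplying on the right by $T$ and using $ST=I$ then yields $(R_i\otimes I_k)T=T(R_i\otimes I_n)$. Thus $T$ commutes with the right shifts. Because $\mult\otimes\C^{k\times n}$ is exactly the commutant of the (operator-valued) right multiplier algebra --- the matrix-valued version of \cite[Theorem 1.2]{DP-inv}, used in the same way in the proof of the theorem above --- it follows that $T=T(L)\in\mult\otimes\C^{k\times n}$.

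It remains to show $n=k$, which is elementary once $T$ is known to be a multiplier. Writing $S=\sum_{\alpha}L^{\alpha}\otimes\hat S_{\alpha}$ and $T=\sum_{\alpha}L^{\alpha}\otimes\hat T_{\alpha}$ with $\hat S_{\alpha}\in\C^{n\times k}$ and $\hat T_{\alpha}\in\C^{k\times n}$, I would use the fact that left multiplication by $L^{\alpha}$ raises the word-length grading by $|\alpha|$, so that the degree-zero (vacuum) coefficient of a product of left multipliers is the product of their degree-zero coefficients. Evaluating $ST=I_{\hardy\otimes\C^n}$ and $TS=I_{\hardy\otimes\C^k}$ in degree zero therefore produces, for the finite matrices $S(0)=\hat S_{\emptyset}\in\C^{n\times k}$ and $T(0)=\hat T_{\emptyset}\in\C^{k\times n}$, the identities $S(0)T(0)=I_n$ and $T(0)S(0)=I_k$. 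A rank count finishes the argument: $\operatorname{rank}(S(0)T(0))\le\min(n,k)$ forces $n\le k$, and the symmetric relation forces $k\le n$, so $n=k$. Combining the two steps gives $S^{-1}=T\in\mult\otimes\C^{n\times n}$, as claimed. (Alternatively, $n=k$ could be deduced from the preceding theorem by viewing $TS$ and $ST$ as equivalent idempotents and comparing the ranks of the associated constant projections, but the direct computation above is shorter and the only genuine content lies in the commutant argument of the previous paragraph.)
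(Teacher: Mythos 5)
Your proof is correct, but the way you get $n=k$ differs from the route the paper has in mind. The paper states this corollary without a separate proof because both ingredients sit inside the proof of the appendix theorem: the commutant step there is literally your first paragraph (from $S(R_i\otimes I_k)=(R_i\otimes I_n)S$, multiply by $S^{-1}$ on both sides and invoke the matrix version of \cite[Theorem 1.2]{DP-inv}), and on this point your argument and the paper's coincide. Where you diverge is the dimension count: the paper's mechanism is operator-theoretic --- since $S(L)\colon \hardy\otimes\C^k\to\hardy\otimes\C^n$ is surjective, the wandering dimension of its range, namely $n$, is at most $k$ (this is the Lemma \ref{monrank}-style argument invoked in the theorem's proof), and applying the same to $S^{-1}$ gives $k\le n$. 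You instead apply the evaluation-at-the-origin homomorphism (the paper's $\varphi_0$, which it has already shown to be a completely contractive homomorphism) to $ST=I$ and $TS=I$, reducing to the finite-matrix identities $S(0)T(0)=I_n$, $T(0)S(0)=I_k$ and a rank count. Your version is more elementary and self-contained --- it needs nothing about wandering subspaces --- while the paper's fits the machinery it has already built and generalizes to settings where one compares ranges that are proper subspaces rather than all of $\hardy\otimes\C^n$. One caveat: your parenthetical alternative via ``equivalent idempotents'' is loose as stated, since $TS$ and $ST$ are the identity operators, not genuinely new idempotents, so extracting $n=k$ from the appendix theorem that way would require an additional invariance-of-rank argument (e.g.\ via $\varphi_0$ again); but since you flag it only as an aside and your main argument stands on its own, this does not affect correctness.
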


\footnotesize
%\bibliographystyle{abbrv}
%\bibliography{Bibs/ncBSO}

\end{document}